\newcommand{\cx}{{\mathbb{C}}}
\newcommand {\Q}{\mathcal Q}
\newtheorem{thm}{Theorem}[section]
\newtheorem*{thm*}{Theorem}
\newtheorem{propos}[thm]{Proposition}
\newtheorem{corol}[thm]{Corollary}
\theoremstyle{definition}
\newtheorem{dfn}[thm]{Definition}
\newtheorem{ex}[thm]{Example}
\newtheorem{rema}[thm]{Remark}
\newtheorem{lem}[thm]{Lemma}
\newcommand{\im}{\ensuremath{\mbox{\rm Im}\,}}
\newcommand{\re}{\ensuremath{\mbox{\rm Re}\,}}
\newcommand{\theor}[1]{\smallskip  \noindent \bf Theorem #1.\it\,\,}
\newcommand{\CC}[1]{\mathbb{C}^{#1}}
\newcommand{\CP}[1]{\mathbb{CP}^{#1}}
\newcommand{\RR}[1]{\mathbb{R}^{#1}}
\newcommand{\dz}{\frac{\partial}{\partial z}}
\newcommand{\lr}{\longrightarrow}
\numberwithin{equation}{section}
\title[Analytic Differential Equations]{Analytic
Differential Equations and  Spherical Real Hypersurfaces}
\author {I. Kossovskiy}
\address{Department of Mathematics, University of Vienna, Oskar-Morgenstern-Platz-1, Vienna, Austria}
\email{ilya.kossovskiy@univie.ac.at}
\author {R. Shafikov}
\email{}
\address{Department of Mathematics, The University of Western Ontario, London, Ontario N6A 5B7 Canada}
\email{shafikov@uwo.ca}
\begin{document}

\date{\today}

\begin{abstract}
We establish an injective correspondence $M\lr\mathcal E(M)$
between real-analytic nonminimal hypersurfaces $M\subset\CC{2}$,
spherical at a generic point, and a class of second order complex
ODEs with a meromorphic singularity. We apply this result to the
proof of the bound $\mbox{dim}\,\mathfrak{hol}(M,p)\leq 5$ for the
 infinitesimal automorphism algebra of an \it
arbitrary \rm germ $(M,p)\not\sim(S^3,p')$ of a real-analytic Levi
nonflat hypersurface $M\subset\CC{2}$ (the Dimension Conjecture).
This bound gives the proof of the dimension gap
$\mbox{dim}\,\mathfrak{hol}(M,p)=\{8,5,4,3,2,1,0\}$ for the
dimension of the automorphism algebra of a real-analytic Levi
nonflat hypersurface. As another application we obtain a new
regularity condition for CR-mappings of nonminimal hypersurfaces,
that we call \it Fuchsian type, \rm and prove its optimality for
extension of CR-mappings to nonminimal points. \\ We also obtain
an existence theorem for solutions of a class of singular complex
ODEs.
\end{abstract}

\maketitle

\tableofcontents

\section{Introduction}

The goal of this paper is to give solution to a number of
previously open problems in CR-geometry, including an old question
of H.~Poincar\'e, by introducing a new technique when a
CR-manifold under consideration is replaced by an appropriate
holomorphic dynamical system. By doing so we reduce the original
problem to a classical setting in local holomorphic dynamics.
Using this approach the authors \cite{divergence} proved recently
that for any positive CR-dimension and CR-codimension the
holomorphic moduli space in CR-geometry is bigger than the formal
one. We describe below the CR-geometry problems addressed in the
paper, and briefly explain our dynamical approach. To outline the
parallels between CR-geometry and complex dynamical systems we
summarize the connection between the geometric objects and the
corresponding dynamical analogues in a table at the end of this
introduction.

Let $M,M'\ni 0$ be two real-analytic hypersurfaces in the complex
space $\CC{2}$. A local biholomorphic mapping $\mathcal
F:\,(\CC{2},0)\lr (\CC{2},0)$ is called \it a holomorphic
equivalence between $(M,0)$ and $(M',0)$, \rm if $F(M)\subset M'$.
In 1907 H.~Poincar\'e formulated his {\it probl\`eme local}
\cite{poincare}: given two germs of real-analytic hypersurfaces
$M,M'\subset\CC{2}$, find all local holomorphic equivalences
between them. The discovery of Poincar\'e was that the problem is
highly nontrivial due to the fact that germs of Levi nondegenerate
hypersurfaces in $\CC{2}$ possess biholomorphic invariants. That
makes two germs in general position holomorphically inequivalent.
Another discovery of Poincar\'e was that the local automorphism
group $\mbox{Aut}\,(M,0)$ of a Levi nondegenerate hypersurface is
finite dimensional and is always a subgroup in the stability group
$\mbox{Aut}\,(S^3,o)$ of a point $o$ lying in the 3-dimensional
sphere $S^3\subset\CC{2}$. For the pseudogroup of local
self-mappings of a Levi nondegenerate hypersurface (or,
alternatively, for the well-defined associated infinitesimal
automorphism algebra $\mathfrak{hol}\,(M,0)$) Poincar\'e gives the
bound $\mbox{dim}\,\mathfrak{hol}\,(M,0)\leq
\mbox{dim}\,\mathfrak{hol}\,(S^3,o)=8.$ The considerations of
Poincar\'e were based on the existence of a "model" Levi
nondegenerate hypersurface, namely, the quadric $\mathcal Q=\{\im
w=|z|^2\}\cong S^3$.  Ideas of Poincar\'e were developed and
generalized in the work of E.~Cartan \cite{cartan}, N.~Tanaka
\cite{tanaka}, S.~Chern and J.~Moser \cite{chern}, who obtained a
complete solution for the local holomorphic equivalence problem
for real-analytic Levi nondegenerate hypersurfaces in
$\CC{n},\,n\geq 2$.

Today, after more than a century, \it probl\`eme local \rm is
still very far from being solved completely. We outline below some
recent results and explain the difficulties in completing the
problem.

For hypersurfaces in $\CC{2}$ with Levi degeneracies satisfying
the finite type condition (see, e.g., \cite{ber}), the equivalence
problem was studied by V.~Beloshapka, V.~Ezhov and M.~Kolar and
completed in the work  \cite{kolar} of Kolar. The problem in the
finite type case was treated in the spirit of Poincar\'e by using
{\it models}, i.e., hypersurfaces defined by $\im w = P_k(z,\bar
z)$, where $P_k(z,\bar z)$ is a nonzero homogeneous polynomial of
degree $k\geq 3$ without harmonic terms. These models allow one to
obtain a formal normal form for finite type real-analytic
hypersurfaces $M\subset\CC{2}$. Even though such a normal form can
be divergent (see~\cite{kolardiverg}), convergence results for
formal CR-equivalences (see, e.g., \cite{ber1}) show that such a
normal form is a biholomorphic invariant and thus a solution for
the holomorphic equivalence problem. By relaxing the finite type
condition one comes to the consideration of a significantly more
difficult to analyze class of the so-called {\it nonminimal}
hypersurfaces (the term coined in \cite{tumanov}), that is real
hypersurfaces $M$ containing a complex hypersurface $X$. The main
obstruction for solving {\it probl\`eme local} in the nonminimal
case is perhaps hidden in the fact that polynomial hypersurfaces
arising from the defining equation of a nonminimal hypersurface
can {\it no longer} be considered as models in the sense of
Poincar\'e-Chern-Moser. For example, in the class of nonminimal
hypersurfaces $\left\{\im w=(\re
w)\psi(|z|^2),\,\psi(0)=0,\,\psi'(0)\neq 0\right\}$, all of which
contain the complex hypersurface $X=\{w=0\}$, any polynomial model
has the isotropy group of dimension $2$, while the hypersurface
$\im w=(\re w)\tan\left(\frac{1}{2}\arcsin |z|^2\right)$ has the
isotropy group of dimension 5 (see \cite{belnew}, \cite{kl}). A
recent result of the authors~\cite{divergence} showing that formal
equivalences between nonminimal hypersurfaces can be actually
divergent, proves, in particular, that a formal normal form can no
longer be a solution for the equivalence problem for nonminimal
hypersurfaces, which further illustrates the difficulties for this
class of hypersurfaces. In fact, even the class of nonminimal
hypersurfaces \it spherical at a generic point \rm appears to be
highly nontrivial (we refer here to the work \cite{kowalski, elz,
belold, kl, nonminimal, divergence} of V.~Beloshapka, P.~Ebenfelt,
M.~Kolar, Kowalski, B.~Lamel,
 D.~Zaitsev and the authors), as it is not even known whether the
moduli space for this class of hypersurfaces is finite
dimensional.

One of the goals of the present paper is to give a complete
solution for the automorphism version of Poincar\'e's {\it
probl\`eme local}. We first give a solution in the nonminimal
case, more precisely, we prove the following
\smallskip

\theor{1} Let $M\subset\CC{2}$ be a real-analytic nonminimal at
the origin Levi nonflat hypersurface. Then the dimension of its
infinitesimal automorphism algebra satisfies the bound

\begin{equation} \label{bound5} \mbox{dim}\,\mathfrak{hol}\,(M,0)\leq
5.\end{equation}\rm

\smallskip

The previous example of the hypersurface $\im w=(\re
w)\tan\left(\frac{1}{2}\arcsin |z|^2\right)$ shows that the bound
in Theorem 1 is in fact sharp. As a corollary, we obtain the
following ``dimension gap" phenomenon, solving the \it probl\`eme
local \rm (in the automorphism interpretation) completely.

\medskip

\noindent\bf Corollary 1 \rm (see Theorem 3.11). \it Let
$M\subset\CC{2}$ be a real-analytic hypersurface, $0\in M$, and
let $M$ be Levi nonflat. Then $\mathfrak{hol}\,(M,0)$ is
isomorphic to a subalgebra in
$\mathfrak{hol}\,(S^3,o)\simeq\mathfrak{su}(2,1)$. Moreover, the
bound $\mbox{dim}\,\mathfrak{hol}\,(M,0)\leq 5$ holds unless
$(M,0)$ is biholomorphic to $(S^3,o)$ for $o\in S^3$. In
particular, the dimension gap $\mbox{dim}\,\mathfrak{hol}(M,0)\in
\{8,5,4,3,2,1,0\}$ holds for all possible dimensions  of the
infinitesimal automorphism algebra of  real-analytic Levi nonflat
hypersurfaces $M\subset\CC{2}$. \rm

\medskip

Corollary~1 should be compared with various dimension gap
phenomena in differential geometry, in particular, for isometries
of Riemannian manifolds (see, e.g., S.~Kobayashi
\cite{kobayashi}), or for automorphism groups of Kobayashi
hyperbolic manifolds (see, e.g., A.~Isaev \cite{isaevn2,isaevbook}
and references therein). An interesting parallel here is given by
the fact that the maximal dimension $8$ for  the automorphism
group of a two-dimensional hyperbolic manifold is realized only
for the special case of the $2$-ball $\mathbb{B}^2\subset\CC{2}$,
while for the automorphism algebra of a real-analytic Levi nonflat
hypersurface
 $M\subset\CC{2}$ the maximal dimension $8$ is realized only for
the $3$-sphere $S^3=\partial\mathbb{B}^2$.

\smallskip

We can further formulate

\medskip

\noindent\bf Corollary 2. \it Let $M\subset\CC{2}$ be a
real-analytic Levi nonflat hypersurface, $M\ni 0$. Suppose that
the stability group $\mbox{Aut}\,(M,0)$ is a Lie group in the
natural topology. Then $\mbox{dim\,Aut}\,(M,0)\leq 5$. \rm

\smallskip

The example of the $3$-sphere $S^3\subset\CC{2}$ (or the previous
example of the nonminimal hypersurface $\im w=(\re
w)\tan\left(\frac{1}{2}\arcsin |z|^2\right)$) show that the bound
in Corollary 2 is sharp. For the most recent results on Lie group
structures for automorphism groups of real-analytic CR-manifolds
we refer to the work \cite{jl1,jl2} of R.~Juhlin and B.~Lamel.

\medskip

The assertions of Theorem 1 and Corollaries 1 and 2 are known as
different versions of the Dimension Conjecture, see the survey
\cite{obzor} and also \cite{elz}, \cite{belnew}, and \cite{kl} for
partial results in this direction. For various corollaries of
Theorem~1 concerning infinitesimal automorphism algebras of
real-analytic germs, as well as intermediate results, we refer the
reader to Section~3. In particular, Theorem~3.7 gives a curious
description of the infinitesimal automorphism algebra of a
nonminimal spherical hypersurface as a subalgebra in the
centralizer of a special element $\sigma\in\mbox{Aut}(\CP{2})$.

Another question addressed in the paper is the analytic
continuation problem for a germ of a biholomorphism between
real-analytic hypersurfaces $M,M'\subset\CC{n}$. The question goes
back to another remarkable result of  Poincar\'e in
\cite{poincare}, which states that a local holomorphic equivalence
$\mathcal F:\,(S^3,o)\lr (S^3,o')$ extends to a global
linear-fractional automorphism of the  $2$-ball $\mathbb
B^2\subset\CC{2}$. The result of Poincar\'e was generalized by
S.\,Pinchuk \cite{pinchuk}, who proved that if a real-analytic
hypersurface $M\subset\CC{n}$ is strictly pseudoconvex, then a
local holomorphic equivalence $\mathcal F:\,(M,p)\lr (S^{2n-1},o)$
extends locally biholomorphically along any path $\gamma\subset
M,\,\gamma\ni p$ (for $M=S^{2n-1}\subset\CC{n}$ the result was
also obtained by H.\,Alexander \cite{alexander}). The importance
of the analytic continuation problem for the boundary regularity
of holomorphic mappings was demonstrated by the celebrated
Pinchuk's reflection principle  for strictly pseudoconvex domains with
real-analytic boundaries (see \cite{pinchuk2}). This result initiated further
generalizations of Poincar\'e's Theorem. Many significant results in this direction
were obtained by the school of A.\,Vitushkin, using the
convergence of the Chern-Moser normal form (see \cite{vitushkin}
and references therein) and also in \cite{Sh,HiSh,shaf} by using
extension along Segre varieties. Note that in all cited papers the
hypersurface $M$ in the preimage was assumed to be minimal.
However, as shown in the earlier paper \cite{nonminimal} of the
authors, when $M$ is nonminimal and $M'$ is the simplest possible
(namely, $M'$ is a hyperquadric in $\CP{n}$) the possibility to
extend the germ of a biholomorphic mapping $\mathcal F:\,(M,p)\lr
(M',p')$ analytically along a path $\gamma\subset M,\,\gamma\ni p$
\it fails \rm to hold in general, if the path $\gamma$ intersects
the complex hypersurface $X$, contained in $M$.  The difficulty
here is that neither the Chern-Moser-type technique (in view of
the absence of a convergent normal form), nor the technique of
extension along Segre varieties (in view of the fact that $Q_p\cap
X\neq\emptyset$ implies $p\in X$) can be used to extend a mapping
to nonminimal points in $M$. However, it was shown in
\cite{nonminimal} that \it if $M\setminus X$ is Levi nondegenerate
and $X\ni 0$, then one can choose an open set $U\subset\CC{n}$,
$U\ni 0$ in such a way that the desired analytic extension holds
(as a mapping into $\CP{n}$) for any choice of a point
$p\in(U\setminus X)\cap M$ and a path $\gamma\subset U\setminus
X,\,\gamma\ni p$ \rm (note that $\gamma$ here need not to lie in
$M$). Since $U\setminus X$ is not simply-connected, such an
extension can branch about the complex locus $X$, which forms the
first type of obstructions for extending a mapping into a quadric
to the complex locus $X$ (see various examples provided in
\cite{nonminimal}). We say that the resulting (multiple-valued)
analytic mapping $\mathcal F:\,U\setminus X\lr\CP{n}$ \it is
associated with $M$ \rm (this object is defined uniquely up to a
composition with an element $\sigma\in\mbox{Aut}(\CP{n})$). \rm
Surprisingly, the authors found an example (see Example 6.7 in
Section 6) where a local biholomorphic mapping $\mathcal
F_0:\,(M,p)\lr (S^3,o)$ of a nonminimal hypersurface
$M\subset\CC{2}$ at a Levi nondegenerate point $p$ does \it not
\rm extend holomorphically to the complex locus $X$, even though
the associated mapping $\mathcal F$ does not branch about $X$. The
latter example made the extension/no extension dichotomy
particularly intriguing, and also showed the existence of another
type of obstruction for analytic extension to nonminimal points.

Our second main result is the discovery of the \it non-Fuchsian
type \rm condition for a hypersurface $M\subset\CC{2}$ (see
Definition 1.1 below) as the second type of obstruction and the
proof of the fact that no further obstructions exist beside the
two mentioned previously.  We formulate  the results in detail
below.

Let $M\subset\CC{2}$ be a real-analytic nonminimal at the origin
Levi nonflat hypersurface, and $U\ni 0$ be a polydisc. We say that
\it $M$ is given in $U$ in prenormal coordinates \rm if the
defining equation of $M\cap U$ is of the form
\begin{equation}\label{prenormal}
v=u^m\left(\pm|z|^2+\sum\limits_{k,l\geq 2}\Phi_{kl} (u)z^k\bar
z^l\right),
\end{equation}
where $z=x+iy,\,w=u+iv$ denote the coordinates in $\CC{2}$ and
$\Phi_{kl}(u)$ are analytic near the origin functions. The complex
locus for $M$ in this case is given by $X=\{w=0\}$. Depending on
the sign in \eqref{prenormal} we call $M$ \it positive \rm or \it
negative \rm respectively. Examples in Section~2 below show that
prenormal coordinates for a nonminimal hypersurface fail to exist
in general. However, Theorem 3.1 (see Section 3) shows that \it
prenormal coordinates always exist for every real-analytic
nonminimal at the origin and spherical outside the complex locus
hypersurface. \rm

For a nonminimal hypersurface, given in prenormal coordinates, we
first prove the following geometric criterion for the analytic
continuation of a mapping into a sphere.

\medskip

\theor{2} Let $M\subset\CC{2}$ be a real-analytic hypersurface,
containing a complex hypersurface $X\ni 0$, which is Levi
nondegenerate and spherical in $M\setminus X$. Suppose that $M$ is
given in some polydisc $U=\{|z|<\delta\}\times\{|w|<\epsilon\}$ in
prenormal coordinates. Then a local biholomorphic mapping
$\mathcal F:\,(M,p)\lr(S^3,p')$, $p\in (M\setminus X)\cap U$,
$p'\in S^3$, extends to $X$ holomorphically if and only if for
each Segre variety  $Q_s$, $s\in U$, which is not a "horizontal"\,
line $\{w=const\}$, there exists a holomorphic graph
$$\tilde
Q_s=\left\{(z,w)\in \CP{1}\times\{|w|<\epsilon\}:\,
z=h_s(w)\right\},\,\,h_s\in\mathcal
O\left(\{|w|<\epsilon\}\right)$$ (called the extension of $Q_s$),
such that $Q_s= \tilde Q_s\cap U$. \rm

\medskip

We next formulate the crucial

\begin{dfn}
Suppose that $M$ satisfies the conditions of Theorem 2. We say
that \it $M$ is of Fuchsian type at the origin, \rm if its
defining function \eqref{prenormal} satisfies
\begin{equation} \label{fuchstype}
\mbox{ord}_0 \Phi_{22}\geq m-1,\,\mbox{ord}_0 \Phi_{33}\geq
2m-2,\,\mbox{ord}_0 \Phi_{23}\geq \frac{3}{2}(m-1),
\end{equation}
where $\mbox{ord}_0$ denotes the order of vanishing of a function
at the origin. If the conditions \eqref{fuchstype} fail to hold, we
say that \it $M$ is of non-Fuchsian type. \rm
\end{dfn}

We emphasize that the Fuchsian type condition holds automatically
if $m=1$, and fails to hold in general for $m>1$. It is shown in
Section~6 that the property of being Fuchsian is independent of
the choice of prenormal coordinate system.

\medskip

\theor{3} Let $M\subset\CC{2}$ be a real-analytic hypersurface,
containing a complex hypersurface $X\ni 0$, which is Levi
nondegenerate and spherical in $M\setminus X$, $U$ a sufficiently
small neighbourhood of the origin, $p\in (M\setminus X)\cap U$, and
let $\gamma$ be a generator of $\pi_1(U\setminus X)$, $p\in
\gamma$. Suppose that $M$ is of Fuchsian type. Then a local
biholomorphic mapping $\mathcal F_0:\,(M,p)\lr(S^3,p'),\,p'\in
S^3$, extends to $X$ holomorphically if and only if its analytic
extension $\mathcal F:\,U\setminus X\lr\CP{2}$ does not branch
along~$\gamma$.
\medskip
 \rm

 It is shown in Section 6 that the Fuchsian type condition in Theorem 3 is in a sense optimal.
 We also note that Theorem 3 demonstrates the difference between the
 geometry of $1$-nonminimal and $m$-nonminimal hypersurfaces with
 $m>1$ respectively. This difference became apparent already in the
 work of P.~Ebenfelt \cite{ebenfelt}, where the analyticity of
 CR-mappings from $1$-nonminimal hypersurfaces was proved. It also appeared in the paper \cite{divergence}
 of the authors, where it was
 shown that formal CR-mappings between $m$-nonminimal hypersurfaces
 with $m>1$ can be divergent (while for $m=1$ formal CR-mappings
 are always convergent, as shown by R.~Juhlin and B.~Lamel in
 \cite{jl2}). At the end of Section 3 we formulate a conjecture on universality
 of the Fuchsian type condition as a regularity condition for mappings from
 nonminimal hypersurfaces.

 As an intermediate step in the proof of Theorem 3, we prove the
 following existence theorem for singular ODEs: \it a singular
 holomorphic ODE
 \begin{equation}\label{briot}
 z''=\frac{1}{w}P(z,w)z'+\frac{1}{w^2}Q(z,w), \ \ P, Q  \in\mathcal O(\{|z|<\delta\}\times\{|w|<\epsilon\}),
 \end{equation} such that
$Q(z_0,0)=0$ for some $|z_0|<\delta$, has a holomorphic in a
neighbourhood of the origin solution $z=h(w)$ with $h(0)=z_0$,
provided that no local solution of it admits a multiple-valued
extension to an annulus $\{\epsilon'<|w|<\epsilon''\}$ with
$0<\epsilon'<\epsilon''<\epsilon$ \rm (see Theorem 3.5 below).

\smallskip

The \it nonlinear \rm complex ODE \eqref{briot} after the
substitution $u:=z'w$ can be rewritten as the first order system
\begin{equation}
\begin{cases}
wz'=u ,\\
wu'=(1+P(z,w))u+Q(z,w),
\end{cases}
\end{equation}
for which the right-hand side vanishes for $z=z_0,u=0,w=0$. This
is a particular case of the \it Briot-Bouquet type ODEs. \rm These
are first order singular holomorphic ODE systems of the form
$wz'=A(z,w)$, $z\in\CC{n},w \in\CC{},A(0)=0$ with
$A:\,\CC{n}\times\CC{}\lr\CC{n}$ holomorphic near the origin.
Briot-Bouquet type ODEs can be described as nonlinear
generalizations of Fuchsian ODEs. They are known to have a
holomorphic solution under the additional assumption that the
linearization matrix $\frac{\partial A}{\partial z}(0)$ has no
eigenvalues $k\in\mathbb{Z},\, k>0$ (nonresonant case, see
\cite{laine}). In the resonant case a holomorphic solution fails
to exist in general (a simple example is given by the scalar
equation $wz'=z+w$). It is easy to check that the "no-monodromy"
assumption in Theorem 3.5 does not imply the "no-resonance"
condition, and vice versa, so the assertion of Theorem 3.5 is
nontrivial. To the best of our knowledge, the result is new (see,
e.g., the recent surveys \cite{laine},\cite{gromak} and references
therein).

\smallskip

The main tool of the paper is a development, in the \it  Levi
degenerate \rm case, of the fundamental connection between
CR-geometry and the geometry of completely integrable systems of
complex PDEs, first observed by E.~Cartan and B.~Segre
\cite{cartan, segre}. In particular, the geometry of real-analytic
Levi nondegenerate hypersurfaces in $\CC{2}$ is closely related to
that of (nonsingular!) second order complex ODEs, as discussed in
Section~2. For modern treatment of the connection in the
nondegenerate case we refer to earlier work
\cite{sukhov1,sukhov2,gm,nurowski,merker} of H.~Gaussier,
J.~Merker, P.~Nurowski, G.~Sparling and A.~Sukhov. The mediator
between a real hypersurface $M$ and the associated ODE $\mathcal
E(M)$ is the Segre family of $M$, which in this case is (an open
subset of) the family of integral curves of $\mathcal E(M)$.  In
this paper we treat the significantly different case of a
nonminimal hypersurface $M$. By establishing an injective
correspondence $M\lr\mathcal E(M)$ between the class of all
real-analytic nonminimal hypersurfaces $M\subset\CC{2}$, spherical
at a generic point, and a class of second order complex ODEs with
an isolated meromorphic singularity at the origin, we were able to
reformulate the problems, addressed in the paper, in the language
of analytic theory of differential equations. This gives us a
powerful tool for the study of mappings and automorphisms of
nonminimal hypersurfaces. The central object of the paper appears
to be the nonlinear complex ODE
$$z''=\frac{1}{w^m}(Az+B)z'+\frac{1}{w^{2m}}(Cz^3+Dz^2+Ez+F),\eqno
(*)$$ where the holomorphic coefficients
$A(w),B(w),C(w),D(w),E(w),F(w)$ satisfy certain relations which
guarantee that $(*)$ can be locally mapped into the simplest ODE
$z''=0$ at its regular points. The latter property can be
interpreted as vanishing of the Tresse differential invariants of
$\mathcal E(M)$, or as vanishing of the Cartan curvature of
$\mathcal E(M)$ (see the work \cite{tresse}, \cite{cartan2} of
A.~Tresse and E.~Cartan respectively, and also V.~Arnold
\cite{arnoldgeom} for a modern treatment). With the additional
assumption that the hypersurface $M$ admits the rotational
infinitesimal symmetry $iz\frac{\partial}{\partial z}$, the
connection $M\longleftrightarrow\mathcal E(M)$ was studied in the
earlier paper \cite{divergence} of the authors. Remarkably, it
turns out that \it any such $M$ can be associated a linear ODE
$z''=\frac{B(w)}{w^m}z'+\frac{E(w)}{w^{2m}}z$, \rm and
furthermore, Fuchsian type hypersurfaces are associated with
Fuchsian ODEs. Note, however, that as examples in \cite{kl} show,
one cannot restrict considerations to hypersurfaces with the
rotational symmetry only.

\smallskip

The following table illustrates the relation between various
geometric and ODE properties arising from the correspondence
between $M$ and $\mathcal E(M)$.

\medskip\begin{center}
\begin{tabular}{|p{7cm}| p{7cm}|}
 \hline \vspace{0.01cm}
 Nonminimal hypersurface M, spherical in the complement of the
 complex locus $X$ &
Second order complex ODE with a meromorphic singularity and
vanishing
 Cartan-Tresse invariants at regular points\\
\hline
 Nonminimal locus $X=\{w=0\}$ & Singular point $w=0$\\
 \hline
 Segre varieties & Graphs of solutions\\
 \hline
 Monodromy of the associated mapping $\mathcal F$ &
Monodromy of solutions\\
\hline \vspace{0.01cm}
 Holomorphic extension of $\mathcal F$ to $X$ & Meromorphic
extension of solutions to $w=0$\\
\hline
Fuchsian type hypersurface & Fuchsian (Briot - Bouquet) type ODE\\
\hline Automorphisms of a nonminimal hypersurface & Point
symmetries of a singular ODE\\ \hline
\end{tabular}
\end{center}\medskip

The paper is organized as follows. In Section 2 we provide some
background material on CR-geometry and the analytic theory of
differential equation. In Section 3 we give detailed formulations
of the main results of the paper, and also formulate the necessary
intermediate results. Sections 4--9 contain proofs, their
organization is described at the end of Section 3.

\bigskip

\begin{center} \bf Acknowledgments \end{center}

\medskip

We would like to thank Victor Kleptsyn, Timur Sadykov, and Ilpo
Laine for useful discussions, and also Andrey Minchenko for
communicating to us the proof of Proposition 9.2.

\section{Preliminaries}\label{s:SV}

\subsection{Segre varieties.}
Let $M$ be a smooth connected real-analytic hypersurface in
$\cx^{n}$, $0\in M$, and $U$ a neighbourhood of the origin where
$M\cap U$ admits a real-analytic defining function
$\phi(Z,\overline Z)$. For every point $\zeta\in U$ we can
associate with $M$ its so-called Segre variety in $U$ defined as
$$
Q_\zeta= \{Z\in U : \phi(Z,\overline \zeta)=0\}.
$$
Segre varieties depend holomorphically on the variable $\overline
\zeta$. One can find  a suitable pair of neighbourhoods $U_2={\
U_2^z}\times U_2^w\subset \cx^{n-1}\times \CC{}$ and $U_1 \Subset
U_2$ such that
$$
  Q_\zeta=\left \{(z,w)\in U^z_2 \times U^w_2: w = h(z,\overline \zeta)\right\}, \ \ \zeta\in U_1,
$$
is a closed complex analytic graph. Here $h$ is a holomorphic
function. Following \cite{DiPi2} we call $U_1, U_2$ a {\it
standard pair of neighbourhoods} of the origin. The
antiholomorphic $n$-parameter family of complex hypersurfaces
$\{Q_\zeta\}_{\zeta\in U_1}$ is called \it the Segre family of $M$
at the origin. \rm From the definition and the reality condition
on the defining function the following basic properties of Segre
varieties follow:
\begin{equation}\label{e.svp}
  Z\in Q_\zeta \ \Leftrightarrow \ \zeta\in Q_Z,
\end{equation}
\begin{equation*}
  Z\in Q_Z \ \Leftrightarrow \ Z\in M,
\end{equation*}
\begin{equation*}
  \zeta\in M \Leftrightarrow \{Z\in U_1: Q_\zeta=Q_Z\}\subset M.
\end{equation*}
The fundamental role of Segre varieties for holomorphic mappings
is illuminated by their invariance property: if $f: U \to U'$ is a
holomorphic map sending a smooth real-analytic submanifold
$M\subset U$ into another such submanifold $M'\subset U'$, and $U$
is as above, then
$$
f(Z)=Z' \ \ \Longrightarrow \ \ f(Q_Z)\subset Q'_{Z'}.
$$ For
the proofs of these and other properties of Segre varieties see,
e.g., \cite{webster}, \cite{DiFo}, \cite{DiPi2}, \cite{shaf}, or
\cite{ber}.

In the particularly important case when $M$ is a \it real
hyperquadric, \rm i.e., when $$M=\left\{
[\zeta_0,\dots,\zeta_n]\in \cx\mathbb P^{n} : H(\zeta,\bar \zeta)
=0  \right\},$$ where $H(\zeta,\bar \zeta)$ is a nondegenerate
Hermitian form in $\CC{n+1}$ with $k+1$ positive and $l+1$
negative eigenvalues, $k+l=n-1,\,0\leq l\leq k\leq n-1$, the Segre
variety of a point $\zeta \in\CP{n}$ is the projective hyperplane
$ Q_\zeta = \{\xi\in \cx\mathbb P^n: H(\xi,\bar\zeta)=0\}$. The
Segre family $\{Q_\zeta,\,\zeta\in\CP{n}\}$  coincides in this
case with the space $(\CP{n})^*$ of all projective hyperplanes in
$\CP{n}$.

The space of Segre varieties $\{Q_Z : Z\in U_1\}$ can be
identified with a subset of $\cx^K$ for some $K>0$ in such a way
that the so-called \it Segre map \rm $\lambda : Z \to Q_Z$ is
holomorphic (see \cite{DiFo}). For a Levi nondegenerate at a point
$p$ hypersurface $M$ its Segre map is one-to-one in a
neighbourhood of $p$. When $M$ contains a complex hypersurface
$X$, for any point $p\in X$ we have $Q_p = X$ and $Q_p\cap
X\neq\emptyset\Leftrightarrow p\in X$, so that the Segre map
$\lambda$ sends the entire $X$ to a unique point in $\CC{K}$, and
$\lambda$ is not even finite-to-one near each $p\in X$ (i.e., $M$
is \it not essentially finite \rm at points $p\in X$). For a
hyperquadric $\Q\subset\CP{n}$ the Segre map $\lambda'$ is a
global natural one-to-one correspondence between $\CP{n}$ and the
space $(\CP{n})^*$.

\subsection{Defining equations for nonminimal hypersurfaces.}
Let $M\subset\CC{n}$ be again a smooth real-analytic nonminimal
hypersurface, containing a complex hypersurface $X\ni 0$ and Levi
nondegenerate in $M\setminus X$. We choose local coordinates
$(z,w) \in\CC{n-1}\times \CC{}$ near the origin in such a way that
the complex hypersurface, contained in $M$, is given by
$X=\{w=0\}$, and $M$ is given locally by the equation
$$\im w=(\re w)^m\Phi(z,\bar z,\re w),$$ where $\Phi(z,\bar z,\re w)$ is a real-analytic function
in a neighbourhood of the origin such that  $\Phi(z,\bar
z,0)\not\equiv 0$, $\Phi(z,0,\re w)=\Phi(0,\bar z,\re w)\equiv 0$,
and $m$ is a positive integer (see \cite{ber},\cite{ebenfelt} for
the existence of such coordinates). In this case $M$ is called \it
$m$-nonminimal, \rm and the integer $m$, known to be a
biholomorphic invariant of $M$, is called \it the nonminimality
order of $M$ at $0$. \rm We may further consider the so-called \it
complex defining
 equation \rm (see, e.g., \cite{ber})\,
$w=\Theta(z,\bar z,\bar w)$ \, of $M$ near the origin, which one
obtains by substituting $u=\frac{1}{2}(w+\bar
w),\,v=\frac{1}{2i}(w-\bar w)$ into the real defining equation and
applying the holomorphic implicit function theorem. Here
$\Theta=1+O(2)$ is a real-analytic function near the origin in
$\CC{2n-1}$ satisfying certain reality condition. For our purposes
it is convenient to use the so-called \it exponential \rm defining
equation for a nonminimal real hypersurface
\cite{nonminimal},\,\cite{kl}:
\begin{gather*}
w=\bar w\,e^{i\varphi(z,\bar z,\,\bar w)},
\end{gather*}
where the complex-valued real-analytic function $\varphi$ in a
polydisc $U\ni 0$ satisfies the conditions $\varphi(z,0,\re
w)=\varphi(0,\bar z,\re w)\equiv 0$ (here $m$ is the nonminimality
order of $M$ at $0$), $\varphi(z,\bar z,\,\bar w)=(\bar
w)^{m-1}\psi(z,\bar z,\,\bar w)$ for an appropriate real-analytic
function $\psi(z,\bar z,\,\bar w)\not\equiv 0$,  and also the
reality condition
\begin{equation}\label{reality}
\varphi(z,\bar z,w\,e^{-i\bar\varphi(\bar z,z,w)})\equiv
\bar\varphi(\bar z,z,w),
\end{equation}
reflecting the fact that $M$ is a real hypersurface.

\medskip

\noindent {\bf Convention.} In what follows in this paper, for a
series of the form
$$f(z_1,..,z_s)=\sum\nolimits_{k_j\in\mathbb{Z}}c_{k_1,...,k_s}z_1^{k_1}\cdot...\cdot
z_s^{k_s}$$  we denote by $\bar f(z_1,..,z_s)$ the series
$\sum_{k_j\in\mathbb{Z}}\overline c_{k_1,...,k_s}
\,z_1^{k_1}\cdot...\cdot z_s^{k_s}$.

\medskip

We then introduce the following property, strengthening the
$m$-nonminimality.

\begin{dfn}\label{d.2.1}
A real-analytic hypersurface $M\subset\CC{n}$, containing a
complex hypersurface $X=\{w=0\}$ and Levi nondegenerate in
$M\setminus X$, is called {\it Levi regular at the origin}, if in
appropriate local coordinates near the origin the function
$\varphi$ in the exponential defining equation of $M$ has the
form:
\begin{equation}\label{e.2.2}
\varphi(z,\bar z,\bar w)=(\bar w)^{m-1} (h(z,\bar z,\bar
w)+\tilde\varphi(z,\bar z,\bar w)),
\end{equation}
where $h(z,\bar z,\bar w)$ is a nondegenerate hermitian form in
$z,\bar z$ for each $\bar w$, $m$ is the nonminimality order of
$M$ at $0$, $\tilde\varphi(z,0,\bar w)\equiv\tilde\varphi(0,\bar
z,\bar w)\equiv 0$ and also $\tilde\varphi(z,\bar z,\bar
w)=O(||z||^3)$ (here $||z||$ is the standard Euclidian norm in
$\CC{n-1}$). Alternatively, the Levi regularity means that the
power series $\frac{1}{(\bar w)^{m-1}}\varphi(z,\bar z,\bar
w)|_{\bar w=0}$ has a nondegenerate hermitian part.
\end{dfn}

The following example shows that a generic nonminimal at the
origin and Levi nondegenerate outside the complex locus real
hypersurface does not have the Levi regularity property.

\begin{ex}\label{ex.2.2}
Let $M\subset\CC{2}$ be a 2-nonminimal at the origin hypersurface
of the form $\im w=(\re w)^4|z|^2+(\re w)^2|z|^4+O(|z|^4|w|^4)$.
Then it is not difficult to check, that $M$ is Levi nondegenerate
in $M\setminus X$, but is not Levi regular at the origin.
\end{ex}

However, it will be shown in the next section that for $n=2$ the
Levi regularity condition holds for \it spherical \rm nonminimal
hypersurfaces.

The Levi regularity condition can be naturally reformulated in
terms of the real defining function $(\re w)^m\Phi(z,\bar z,\re
w)$ above: one should require that the function $\Phi$ can be
expanded as
\begin{equation}\label{e.2.3}
\Phi(z,\bar z,\re w)=H(z,\bar z,\re w)+\tilde\Phi(z,\bar z,\re w)
\end{equation}
with $H(z,\bar z,\re w)$ being a nondegenerate hermitian form in
$z,\bar z$ for each $w$, $\tilde\Phi(z,0,\re
w)\equiv\tilde\Phi(0,\bar z,\re w)\equiv 0$ and also
$\tilde\Phi(z,\bar z,\re w)=O(||z||^3)$. The equivalence of the
definitions follows from the fact that the functions $\varphi$ and
$\Phi$ from the exponential and the real defining equations
respectively are related as
$$\left.\varphi\right|_{M\setminus
X}=\left.\frac{1}{i}\mbox{log}\frac{w}{\bar w}\right|_{M\setminus
X}=\frac{1}{i}\mbox{log}\frac{1+iu^{m-1}\Phi(z,\bar
z,u)}{1-iu^{m-1}\Phi(z,\bar z,u)}=2u^{m-1}\Phi(z,\bar
z,u)+O(u^{3m-3}\Phi^3(z,\bar z,u))).$$ Here $w=u+iv$.

\subsection{Real hypersurfaces and second order differential equations.}
Using the Segre family of a Levi nondegenerate real hypersurface
$M\subset\CC{n}$ , one can associate to it a system of second
order holomorphic PDEs with $1$ dependent and $n-1$ independent
variables. The corresponding remarkable construction goes back to
E.~Cartan \cite{cartan2},\cite{cartan} and Segre \cite{segre}, and
was recently revisited in \cite{sukhov1}, \cite{sukhov2},
\cite{nurowski}, \cite{gm}, \cite{merker} (see also references
therein). We describe here the procedure for the case $n=2$, which
will be relevant for our purposes. In what follows we denote the
coordinates in $\CC{2}$ by $(z,w)$, and put $z=x+iy,\,w=u+iv$. Let
$M\subset\CC{2}$ be a smooth real-analytic hypersurface, passing
through the origin, and let $(U_1,U_2)$ be its standard pair of
neighbourhoods. In this case
 one associates with $M$ a second order holomorphic ODE, uniquely determined
 by the condition that it is satisfied by the
Segre family $\{Q_\zeta\}_{\zeta\in U_1}$ of $M$ in a
neighbourhood of the origin where the Segre varieties are
considered as graphs $w=w(z)$. More precisely, it follows from the
Levi nondegeneracy of $M$ near the origin that the Segre map
$\zeta\lr Q_\zeta$ is injective and also that the Segre family has
the so-called transversality property: if two distinct Segre
varieties intersect at a point $q\in U_2$, then their intersection
at $q$ is transverse. Thus, $\{Q_\zeta\}_{\zeta\in U_1}$ is a
2-parameter holomorphic w.r.t. $\bar\zeta$ family of holomorphic
curves in $U_2$ with the transversality property. It follows from
the holomorphic version of the fundamental ODE theorem (see, e.g.,
\cite{ilyashenko}) that there exists a unique second order
holomorphic ODE $w''=\Phi(z,w,w')$, satisfied by the graphs
$\{Q_\zeta\}_{\zeta\in U_1}$.

This procedure can be made more explicit if one considers the
 complex defining
 equation
$w=\rho(z,\bar z,\bar w)$ of $M$ near the origin. The Segre
variety $Q_p$ of a point $p=(a,b)$ close to the origin is given by
\begin{equation} \label{segre}w=\rho(z,\bar a,\bar b). \end{equation}
Differentiating \eqref{segre} once, we obtain
\begin{equation}\label{segreder} w'=\rho_z(z,\bar a,\bar b). \end{equation}
Considering \eqref{segre} and \eqref{segreder}  as a holomorphic
system of equations with the unknowns $\bar a,\bar b$, and
applying the implicit function theorem near the origin, we get
$$
\bar a=A(z,w,w'),\,\bar b=B(z,w,w').
$$
The implicit function theorem here is applicable as the Jacobian
of the system coincides with the Levi determinant of $M$ for
$(z,w)\in M$. Differentiating \eqref{segre} twice and plugging
there the expressions for $\bar a,\bar b$ finally yields
\begin{equation}\label{segreder2}
w''=\rho_{zz}(z,A(z,w,w'),B(z,w,w'))=:H(z,w,w').
\end{equation}
Now \eqref{segreder2} is the desired holomorphic second order ODE
$\mathcal E$.

The concept of a PDE system associated with a CR-manifold can be
generalized for various classes of CR-manifolds.  The
correspondence $M\lr \mathcal E(M)$ has the following fundamental
properties:

\begin{enumerate}

\item[(1)] Every local holomorphic equivalence $F:\, (M,0)\lr (M',0)$
between two  CR-submanifolds is an equivalence between the
corresponding PDE systems $\mathcal E(M),\mathcal E(M')$;

\item[(2)] The complexification of the infinitesimal automorphism algebra
$\mathfrak{hol}(M,0)$ of $M$ at the origin coincides with the Lie
symmetry algebra  of the associated PDE system $\mathcal E(M)$
(see, e.g., \cite{olver} for the details of the concept).

\end{enumerate}

For the proof and applications of the properties (1) and (2) in
various settings we refer to
\cite{sukhov1},\cite{sukhov2},\cite{nurowski},\cite{gm},\cite{merker}.
We emphasize that for a nonminimal at the origin hypersurface
$M\subset\CC{2}$ there is \it no \rm a priori way to associate
with $M$ a second order ODE or even a more general PDE system near
the origin. However, in Section 5 we provide a way to connect
nonminimal spherical real hypersurfaces in $\CC{2}$ with a class
of complex differential equations with an isolated meromorphic
singularity.


\subsection{Complex linear differential equations with an isolated singularity}
Complex linear ODEs form one of the most important and geometric
class of complex ODEs. We refer to \cite{ilyashenko}, \cite{ai},
\cite{bolibruh}, \cite{vazow} and references therein for various
facts and problems, concerning complex linear differential
equations. A first order linear system of $n$ complex ODEs in a
domain $G\subset\CC{}$ (or simply a linear system in a domain $G$
in what follows) is a holomorphic ODE system $\mathcal L$ of the
form $y'(w)=A(w)y$, where $A(w)$ is an $n\times n$ matrix-valued
holomorphic in $G$
 function and $y(w)=(y_1(w),...,y_n(w))$
is an $n$-tuple of unknown functions. Solutions of $\mathcal L$
near a point $p\in G$ form a linear space of dimension~$n$.
Moreover, all the solution $y(w)$ of $\mathcal L$ are defined
globally in $G$ as (possibly multiple-valued) analytic functions,
i.e., any germ of a solution near a point $p\in G$ of $\mathcal L$
extends analytically along any path $\gamma\subset G$, starting
at~$p$. A \it fundamental system of solutions for $\mathcal L$ \rm
is a matrix whose columns form some collection of $n$ linearly
independent solutions of $\mathcal L$.

If $G$ is a punctured disc centred at $0$, we call $\mathcal L$
\it a system with an isolated singularity at $w=0$. \rm An
important (and sometimes even a complete) characterization of an
isolated singularity is its \it monodromy operator \rm defined as
follows. If $Y(w)$ is some fundamental system of solutions of
$\mathcal L$ in~$G$ and~$\gamma$ is a simple loop about the
origin, then  the monodromy of $Y(w)$ w.r.t. $\gamma$ is given by
the right multiplication by a constant nondegenerate matrix $M$,
called \it the monodromy matrix. \rm The matrix $M$ is defined up
to a similarity, so that it defines a linear operator
$\CC{n}\lr\CC{n}$, which is  called the monodromy operator of the
singularity.

If the matrix-valued function $A(w)$ is meromorphic at the
singularity $w=0$, we call it a {\it meromorphic singularity}. As
the solutions of $\mathcal L$ are holomorphic in any proper sector
$S\subset G$ of a sufficiently small radius with the vertex at
$w=0$, it is important to study the behaviour of the solutions as
$w\rightarrow 0$. If all solutions of $\mathcal L$ admit a bound
$||y(w)||\leq C|w|^b$  in any such sector (with some constants
$C>0,\ b\in \mathbb R$, depending possibly on the sector), then
$w=0$ is called \it a regular singularity, \rm otherwise it is
called \it an irregular singularity. \rm In particular, in the
case of the trivial monodromy the singularity is regular if and
only if all the solutions of $\mathcal L$ are meromorphic in $G$.
L.~Fuchs introduced the following condition: a singular point
$w=0$ is called \it Fuchsian, \rm if $A(w)$ is meromorphic  at
$w=0$ and has a pole of order $\leq 1$ there. The Fuchsian
condition turns out to be sufficient for the regularity of a
singular point. Another remarkable property of a Fuchsian system
is that every formal holomorphic (and even formal meromorphic)
solution of a Fuchsian system is in fact convergent.

A scalar linear complex ODE of order $n$ in a
 domain $G\subset\CC{}$ is an ODE $\mathcal E$ of the form
$$z^{(n)}=a_{n}(w)z+a_{n-1}(w)z'+...+a_1(w)z^{(n-1)},$$ where $\{a_j(w)\}_{j=1,...,n}$ is
a given collection of holomorphic functions in $G$ and $z(w)$ is
the unknown function. By a reduction of $\mathcal E$ to a first
order linear system (see the above references  for various
techniques of doing that) one can naturally transfer most of the
definitions and facts, relevant to linear systems, to scalar
equations of order $n$. The main difference here is contained in
the appropriate definition of Fuchsian: a singular point $w=0$ for
an ODE $\mathcal E$ is called \it Fuchsian, \rm if the orders of
poles $p_j$ of the functions $a_j(w)$ satisfy the inequalities
$p_j\leq j$, $j=1,2,\dots,n$. The Theorem of Fuchs for $n$-th
order scalar ODEs says that \it a singular point of a linear
$n$-th order ODE is regular if and only if it is Fuchsian. \rm In
particular, if the monodromy of the equation is trivial, then the
Fuchsian condition is equivalent to the fact that all solutions of
the equation are meromorphic at the singular point $w=0$.

Further information on the classification and behaviour of
solutions for singular linear ODEs  can be found in
\cite{ilyashenko} or \cite{vazow}.

\subsection{Holomorphic vector fields and automorphisms.}
We next give  some preliminaries related to local automorphisms of
real hypersurfaces. By a {\it holomorphic vector field} in a
neighbourhood of the origin in $\CC{n}$ we mean a complex vector
field
$$f_1(z)\frac{\partial}{\partial z_1}+...+f_n(z)\frac{\partial}{\partial
z_n},$$ where the functions $f_1(z),...,f_n(z)$ are holomorphic in
a neighbourhood of the origin. Real parts of holomorphic vector
fields are precisely the real vector fields in $\CC{n}$ generating
flows of local biholomorphic transformations. Let now
$M\subset\CC{n}$ be a smooth real-analytic hypersurface containing
the origin. The \it infinitesimal automorphism algebra \rm of $M$
at the origin (we denote it by $\mathfrak{hol}\,(M,0)$ in what
follows) is the Lie algebra of germs at the origin of holomorphic
vector fields $X$ such that $\re X$ is tangent to $M$ at any point
$p\in M$ where it is defined. If this algebra is
finite-dimensional, we may assume that all its elements are
defined in the same neighbourhood of the origin. The importance of
the infinitesimal automorphism algebra stems from by the fact that
real parts of elements of $\mathfrak{hol}\,(M,0)$ are precisely
the real vector fields in $\CC{n}$ near the origin that generate
real flows of local  biholomorphic automorphisms of $M$ at $0$.

One can also consider the \it stability algebra \rm
$\mathfrak{aut}\,(M,0)$ of $M$ a the origin. This Lie algebra
consists of vector fields $X\in\mathfrak{hol}\,(M,0)$, vanishing
at $0$. Real parts of vectors fields, lying in
$\mathfrak{aut}\,(M,0)$, are precisely the real vector fields in
$\CC{n}$ near the origin, generating flows of local  biholomorphic
automorphisms of $M$ near the origin, preserving the origin. In
many nondegeneracy settings \cite{ber} this algebra is the tangent
algebra to the stability group of the germ $(M,0)$.

For the compact complex manifold $\CP{n}$ its automorphism group
consists of projective transformations (that is given by elements
of $\mbox{GL}(n+1,\CC{})$, naturally acting on the homogeneous
coordinates and considered up to a scaling). This Lie group is
usually denoted by $\mbox{PGL}(n+1,\CC{})$. It is generated by the
Lie algebra $\mathfrak{hol}(\CP{n})$, which is a certain algebra
of quadratic vector fields in each fixed affine chart (see, for
example, \cite{chern}). The Lie algebra $\mathfrak{hol}(\CP{n})$
is isomorphic to $\mathfrak{sl}(n,\CC{})$ as a Lie algebra (see
\cite{vinberg} for more details). For any nondegenerate
hyperquadric $\mathcal Q\subset\CP{n}$ the algebra
$\mathfrak{hol}(\CP{n})$ is the complexification of the
infinitesimal automorphism algebra $\mathfrak{hol}(\mathcal Q)$.
It will be also important for us that the natural action of
$\mbox{PGL}(n+1,\CC{})$ on $\mathfrak{hol}(\CP{n})$ (i.e., the
natural "coordinate-change" action of biholomorphisms from
$\mbox{PGL}(n+1,\CC{})$ on vector fields from
$\mathfrak{hol}(\CP{n})$) corresponds to the adjoint action of the
Lie group $\mbox{PGL}(n+1,\CC{})$ on its tangent algebra
$\mathfrak{sl}(n,\CC{})$ (Lie algebra automorphisms, corresponding
to this action, are sometimes called \it conjugacies \rm or \it
inner automorphisms). \rm In the matrix realization of above Lie
groups and algebras, conjugacies are simply automorphisms, given
by a matrix conjugation.

\subsection{Nonminimal spherical hypersurfaces}

We give in this section more detailed formulations of the results
in \cite{nonminimal}, which will be used in various sections of
the present paper.

\begin{dfn} A real-analytic hypersurface $M\subset\CC{n},$ containing a complex hypersurface $X\ni 0$, is called \it Segre regular in
a neighbourhood $U$ of the origin, \rm if  the Segre map $\lambda$
is locally injective in $U\setminus X$. \end{dfn} It is shown in
\cite{nonminimal} that if $M$ is Levi nondegenerate in $M\setminus
X$, then one can choose a neighbourhood $U\ni 0$ in such a way
that $M$ is Segre regular in $U$.

Assume now that $M$ is Levi nondegenerate in $M\setminus X$ and is
Segre regular in a neighbourhood $U$. Denote by $M^+,M^-$ the two
connected components of $M\setminus X$ and assume, in addition,
that one of the components (say, $M^+$) is $(k,l)$-spherical
(i.e., it can be locally biholomorphically mapped into a
hyperquadric $\mathcal Q\subset\CP{n}$ with $k$ positive and $l$
negative eigenvalues, $k+l=n-1$). The hypersurface $M$ in this
case is called \it pseudospherical. \rm  Then it is proved in
\cite{nonminimal} that

\smallskip

 \it the second component $M^-$ is also
$(k',l')$-spherical (with, possibly, $(k',l')\neq (k,l)$) and
there exists an open neighbourhood $U$ of $X$ in $\mathbb C^n$
such that for  $p\in (M \setminus X)\cap U$  any biholomorphic map
$\mathcal F_p$ of $(M,p)$ into a $(k,l)$\,-\,hyperquadric $\Q$
extends analytically along any path in $U\setminus X$ as a locally
biholomorphic map into $\CP{n}$. In particular, $\mathcal F_p$
extends to a possibly multiple-valued locally biholomorphic
analytic mapping $\mathcal F:\,U\setminus X\longrightarrow\CP{n}$
in the sense of Weierstrass. \rm \smallskip

The above theorem implies the existence of a nontrivial
biholomorphic invariant of a nonminimal spherical real
hypersurface called \it the monodromy operator. \rm To define it
we consider a generator $\gamma$ of $\pi_1(U\setminus X)$ with
$\gamma\ni p$ and consider the analytic continuation $\mathcal
F_{\gamma,p}$ of $\mathcal F_p$ along $\gamma$. There exists an
element $\sigma\in\mbox{Aut}(\CP{n})$ such that $\mathcal
F_{\gamma,p}=\sigma\circ \mathcal F_{p}$. It is convenient to
interpret $\sigma$ as an $(n+1)\times(n+1)$-matrix, defined up to
scaling, that we call \it the monodromy matrix. \rm The monodromy
matrix is defined up to similarity: namely, a replacement of the
mapping $\mathcal F_{p}:\,(M,p)\lr\CP{n}$ by any other mapping
$\tau\circ \mathcal F_{p}:\,(M,p)\lr\CP{n}$
 leads to a similar monodromy
matrix
\begin{equation}\label{changesigma}\tilde\sigma=\tau\circ\sigma\circ\tau^{-1}.\end{equation} Thus
we get a well-defined linear operator $\CC{n+1}\lr\CC{n+1}$,
defined up to scaling and independent of the choice of the initial
mapping $\mathcal F_p$, the target quadric $\mathcal Q$ and the
path $\gamma$, which is called the monodromy operator. If the
analytic continuation $\mathcal F_{\gamma,p}$ of the initial
mapping $\mathcal F_p$ leads to the same element $\mathcal F_p$,
then the monodromy operator is the identity. The analytic mapping
$\mathcal F$ in this case is a well-defined single-valued locally
biholomorphic mapping $U\setminus X\lr\CP{n}$.

\section{Formulations of the principal results}

We give in this section more detailed formulation of Theorems
1,~2, and 3, and also state some intermediate results that are of
independent interest.

The first result provides the existence of prenormal coordinates
for a nonminimal spherical hypersurface in $\CC{2}$. As was
explained earlier, prenormal coordinates do not exist for a
nonminimal Levi nonflat hypersurface in general.

\begin{thm}
Let $M\subset\CC{2}$ be a real-analytic nonminimal at the origin
hypersurface, and let $X$ be its complex locus. Suppose that
$M\setminus X$ is Levi nondegenerate and spherical. Then in
suitable local holomorphic coordinates near the origin, called
prenormal coordinates,  $M$ can be represented by an exponential
defining equation $w=\bar w e^{i\varphi(z,\bar z,\bar w)}$ with
\begin{gather}\label{complexdef}
\varphi(z,\bar z,\bar w)=(\bar w)^{m-1}\left(\pm
|z|^2+\sum\limits_{k,l\geq 2}\varphi_{kl}(\bar w)z^k\bar
z^l\right),
\end{gather}
or, equivalently, by a real defining equation $\im w=(\re w)^{m}
\Phi(z,\bar z,\re w)$ with
\begin{gather*}
\Phi(z,\bar z,\re w)= \pm|z|^2+\sum\limits_{k,l\geq 2}\Phi_{kl}
(\re w)z^k\bar z^l,
\end{gather*}
where  $\varphi_{kl}$ and $\Phi_{kl}$ are analytic functions near
the origin, and  $m\geq 1$ is the nonminimality order of $M$ at
the origin.
\end{thm}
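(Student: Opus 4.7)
The plan is to reach prenormal coordinates in two stages: first establish \emph{Levi regularity} of $M$ at the origin in the sense of Definition~2.1, and then apply a sequence of holomorphic coordinate changes preserving $X=\{w=0\}$ to normalize the leading Hermitian coefficient to $\pm 1$ and eliminate all remaining $z^k\bar z$ and $z\bar z^l$ terms with $(k,l)\neq (1,1)$.

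I would begin from coordinates in which $M$ takes the form $v = u^m\Phi(z,\bar z,u)$ with $\Phi(z,0,u) \equiv \Phi(0,\bar z,u) \equiv 0$ and $\Phi \not\equiv 0$, as supplied by the standard reduction recalled in Section~2.2. Expanding $\Phi = \sum_{k,l\geq 1}\Phi_{kl}(u)\,z^k\bar z^l$, the pure holomorphic and antiholomorphic terms are absent by construction. It therefore suffices to prove (a) that, after a suitable coordinate change, $\Phi_{11}(0)\neq 0$ (Levi regularity), and then (b) that one can further normalize $\Phi_{11}$ to the constant $\pm 1$ while killing the cross coefficients $\Phi_{k1}$, $\Phi_{1l}$ with $k,l\geq 2$.

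The hard step is (a); Example~2.2 shows that Levi nondegeneracy of $M\setminus X$ alone is insufficient, so sphericality must be used essentially. The strategy I would follow is to exploit the vanishing of the Chern-Moser curvature tensor on $M\setminus X$, which is equivalent to sphericality. Writing this vanishing condition in terms of the Taylor coefficients $\Phi_{kl}(u)$ yields polynomial-differential identities that hold for all small $u\neq 0$, hence by continuity at $u=0$ as well. Analyzing the leading behaviour of these identities as $u\to 0$, combined with holomorphic coordinate changes of the form $z\mapsto z + p(z,w)$ chosen to absorb the lower-order corrections, forces $\Phi_{11}(0)\neq 0$ in the new coordinates; otherwise the sphericality conditions would force a weighted vanishing of $\Phi$ incompatible with the $m$-nonminimality hypothesis. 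This is essentially a leading-order analysis in the weighted expansion of the defining function, and it is the step where sphericality enters in its strongest form.

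Once Levi regularity is established, step (b) is routine. Since $\Phi_{11}(u)$ is real-analytic and real-valued (by the reality condition on the defining equation) with $\Phi_{11}(0)\neq 0$, there exists a holomorphic branch of $\sqrt{\pm 1/\Phi_{11}(w)}$ near $w=0$, and the substitution $z\mapsto z\,\sqrt{\pm 1/\Phi_{11}(w)}$ normalizes the Hermitian coefficient to the constant $\pm 1$. The remaining cross terms $\Phi_{k1}$, $\Phi_{1l}$ with $(k,l)\neq (1,1)$ are then eliminated iteratively by coordinate changes of the form $z\mapsto z + q(z,w)$ determined weight-by-weight, with solvability at each order ensured by the nondegeneracy of the Hermitian form $\pm|z|^2$. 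Convergence to an honest holomorphic coordinate change follows from the real-analyticity of $M$. Finally, the passage from the real defining equation $v = u^m\Phi$ to the exponential one $w = \bar w\,e^{i\varphi}$ with $\varphi = \bar w^{m-1}(\pm|z|^2 + \sum_{k,l\geq 2}\varphi_{kl}(\bar w)z^k\bar z^l)$ is immediate from the relation $\varphi = (1/i)\log(w/\bar w)$ on $M\setminus X$ recalled in Section~2.2.
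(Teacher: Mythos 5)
Your two-stage plan (Levi regularity first, then Chern--Moser-type normalization) matches the architecture of the paper's proof, and your step (b) is essentially the paper's: a substitution $z\mapsto z+\bar\lambda(z,w)/\alpha(w)$ followed by $z\mapsto z\sqrt{\pm 1/\alpha(w)}$. (Two remarks on (b): the cross terms can and should be killed in a single substitution rather than ``iteratively, weight-by-weight'' --- your appeal to real-analyticity for convergence of an infinite iteration is unjustified, and in this nonminimal setting formal normalizations are known to diverge, so that phrasing hides a real danger even though the one-shot version is fine.)

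The genuine gap is in step (a), which you yourself identify as the hard step but do not actually carry out. Your strategy is to write the vanishing of the Chern--Moser/Cartan curvature of $M\setminus X$ as identities in the coefficients $\Phi_{kl}(u)$, pass to $u=0$ ``by continuity,'' and conclude that failure of $\Phi_{11}(0)\neq 0$ would contradict $m$-nonminimality. None of this is substantiated, and the continuity step is precisely where the difficulty sits: the curvature is only defined where the Levi form is nondegenerate, and clearing the denominators (powers of the Levi determinant, which vanishes on $X$) produces identities whose both sides vanish identically at $u=0$ unless one tracks orders of vanishing carefully. Example~2.2 of the paper ($\im w=(\re w)^4|z|^2+(\re w)^2|z|^4+\cdots$) shows exactly what must be excluded, and your sketch gives no computation ruling it out. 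The paper takes a different and genuinely global route: by the analytic continuation result of \cite{nonminimal}, the germ of a map into a hyperquadric extends along all paths in $U\setminus X$ and sends Segre varieties to projective hyperplanes; this forces any two distinct Segre varieties meeting in $U\setminus X$ to meet transversally (Proposition~4.2), hence the Levi determinant $\Delta(z,\bar a,\bar b)$ is nonvanishing for $\bar b\neq 0$ (Proposition~4.3, via injectivity of the $1$-jet map of the Segre family), and a Weierstrass-preparation argument on $\Delta$ then pins down the order of vanishing in $\bar b$ as exactly $m-1$, which is Levi regularity (Proposition~4.4). To make your approach work you would have to supply the leading-order curvature analysis in full; as written, the central assertion of the theorem is assumed rather than proved.
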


To formulate the next result we will need the following
definition.

\begin{dfn}
We denote by $\mathcal P_0$ the class of nonminimal smooth
real-analytic hypersurfaces $M\subset\CC{2}$, containing the
complex hypersurface $X=\{w=0\}$, Levi-nondegenerate and spherical
in $M\setminus X$ and given in a neighbourhood $U$ of $0$ in
prenormal coordinates. If in addition $U$ is a polydisc chosen in
such a way that $M$ is Segre regular in $U$, we call $U$ a
neighbourhood \it associated with $M$. \rm We also call the
multiple-valued mapping $\mathcal F:\,U\setminus X\lr\CP{2}$,
extending a germ $\mathcal F_p:\,(M,p)\lr(S^3,p')$ (see Section
2.6), \it the mapping associated with $M$. \rm We call the
hypersurface $M\in\mathcal P_0$ \it positive \rm or {\it negative}
depending on the sign in~\eqref{complexdef}.
\end{dfn}

The $\mathcal P_0$-notation used in this paper is inherited from
the analytic theory of differential equations (see Section 6 for
details). Our next result establishes a fundamental connection
between
 hypersurfaces of class $\mathcal P_0$ and a special class of singular complex ODEs.
 In what follows in the paper we denote by $\Delta_\epsilon$ a disc, centred
 at $0$ of radius $\epsilon$, and by $\Delta^*_\epsilon$ the
 corresponding punctured disc.

\begin{thm}\label{t.3.3}
Suppose that $M\in\mathcal P_0$ and
$U=\Delta_\delta\times\Delta_\epsilon$ is the associated
neighbourhood. Then

\medskip

\noindent(i) There exists a second order ODE
\begin{equation}\label{w''(z)}
w''=-\frac{1}{w^m}(Az+B)(w')^2-\frac{1}{w^{2m}}(Cz^3+Dz^2+Ez+F)(w')^3,
\end{equation} where $A(w),B(w),C(w),D(w),E(w),F(w)$ are holomorphic functions in
the disc $\Delta_\epsilon$ such that~\eqref{w''(z)} is satisfied
by all Segre varieties $Q_p=\{w=w_p(z)\},\,p\in U\setminus X$,
considered as graphs $w=w(z)$.

\medskip

\noindent(ii) The ODE \eqref{w''(z)} and the complex defining
function of $M$, as in \eqref{complexdef}, are related as
\begin{gather}
\notag F(w)=2\varphi_{23}(w),\,A(w)=\pm 6i\varphi_{32}(w),\,B(w)=\pm 2i\varphi_{22}(w)-w^{m-1},\\
\label{ODEviadef}E(w)=6\varphi_{33} \pm 2i(m-1)\varphi_{22}w^{m-1}-8(\varphi_{22})^2 \mp 2i\varphi'_{22}w^m.\\
 \label{ODErelations}A(w)=\pm 3i\bar
F(w),\,C(w)=-\frac{1}{9}A^2(w),\,D(w)=\frac{1}{3}w^{2m}\left(\frac{A(w)}{w^m}\right)'-\frac{1}{3}A(w)B(w),\end{gather}
where the signs are determined by the sign of $M$. \smallskip

\noindent(iii) For a possibly smaller polydisc $U$ the Segre
varieties $Q_p$ of $M$ with $p\in
\Delta^*_\delta\times\Delta^*_\epsilon$, considered as graphs
$z=z(w)$, satisfy the second order meromorphic ODE $\mathcal
E(M)$, given by
\begin{equation}\label{d2z}
z''=\frac{1}{w^m}(Az+B)z'+\frac{1}{w^{2m}}(Cz^3+Dz^2+Ez+F),
\end{equation} where $A(w),B(w),C(w),D(w),E(w),F(w)$ are the same
as in \eqref{w''(z)}. The correspondence $M\lr \mathcal E(M)$
between hypersurfaces of class $\mathcal P_0$ and ODEs of the form
\eqref{d2z}, satisfying \eqref{ODErelations}, is injective.

\smallskip

We say that the ODE $\mathcal E(M)$ is \it associated with $M$.
\end{thm}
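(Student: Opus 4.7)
The plan is to carry out the classical Segre-family-to-ODE construction of Section~\ref{s:SV} on the exponential defining equation~\eqref{complexdef} and then use the sphericity of $M\setminus X$ to force the resulting ODE into the form~\eqref{w''(z)}. Explicitly, the Segre variety of $p=(a,b)$ is $Q_p=\{w=\bar b\,e^{i\varphi(z,\bar a,\bar b)}\}$, and successive logarithmic differentiations give $w'/w = i\varphi_z(z,\bar a,\bar b)$ and $w''/w-(w'/w)^2 = i\varphi_{zz}(z,\bar a,\bar b)$. Since $\varphi=(\bar b)^{m-1}(\pm z\bar a+O(z^2\bar a^2))$, the first equation is inverted by the holomorphic implicit function theorem (applied off $\{w=0\}$) to $\bar a\sim \mp i w'/w^m$ and $\bar b\sim w$; the factor $(\bar b)^{m-1}$ is precisely what accounts for the poles of orders $m$ and $2m$ in~\eqref{w''(z)}. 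Substituting into the formula for $w''$ yields a series in $(z,\,w'/w^m)$ with holomorphic coefficients in $w$, and matching the lowest Taylor coefficients of $\varphi_{zz}$ against the ansatz~\eqref{w''(z)} produces the formulas~\eqref{ODEviadef}.

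The polynomial structure of~\eqref{w''(z)} (degree three in $w'$, $z$-degree at most one and three in the $(w')^2$ and $(w')^3$ coefficients respectively) is forced by the sphericity of $M\setminus X$: at every regular point the ODE is locally equivalent to $w''=0$, hence by the classical Lie--Tresse theorem (cf.~\cite{arnoldgeom}) it must be a cubic polynomial in $w'$ whose coefficients satisfy the Liouville system of differential identities. The vanishing of the $1$ and $w'$ terms follows automatically from the absence of a harmonic part of $\varphi$ (the identities $\varphi(z,0,\bar w)=\varphi(0,\bar z,\bar w)=0$ force $\varphi_{zz}$ to start from $\bar a^2$). The relations~\eqref{ODErelations} split naturally: the identity $A=\pm 3i\bar F$ is the direct translation of the reality condition~\eqref{reality}, which forces $\varphi_{32}(w)=\bar\varphi_{23}(w)$, while $C=-A^2/9$ and the formula for $D$ record the Liouville constraints, i.e., the vanishing of the Tresse--Cartan invariants of~\eqref{w''(z)}.

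For part (iii), the change of independent variable $z\leftrightarrow w$ sends $w''(z)=\Psi(z,w,w')$ to $z''(w)=-\Psi(z,w,1/z')/(z')^3$, and applied to~\eqref{w''(z)} this produces exactly~\eqref{d2z}. For the injectivity of $M\mapsto\mathcal E(M)$ I would argue conceptually: a holomorphic two-parameter family of solutions of~\eqref{d2z} near a regular point reproduces the Segre family of $M$ (parametrized by $\bar a,\bar b$ through the inversion described above), after which $M$ itself is recovered from its Segre family via the relation $p\in M\iff p\in Q_p$. Alternatively, the formulas~\eqref{ODEviadef} already reconstruct the initial Taylor coefficients $\varphi_{22},\varphi_{23},\varphi_{32},\varphi_{33}$ of the defining function, and the higher $\varphi_{kl}$ are then propagated by the sphericity recursion encoded in the Liouville system. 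The main technical obstacle is the power-series bookkeeping near the degenerate locus $\{w=0\}$: one must verify, using sphericity in an essential way, that the \emph{a priori} formal expansion of $iw\varphi_{zz}$ truncates to a polynomial of exactly the stated $z$- and $w'$-degrees, and that the orders of the poles in $w$ match~\eqref{w''(z)} sharply; the reversal of the sphericity recursion in the injectivity step is a secondary subtlety.
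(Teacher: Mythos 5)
Your overall architecture differs from the paper's in one essential respect: you try to obtain the polynomial form \eqref{w''(z)} directly from the elimination applied to \eqref{complexdef} plus an appeal to Lie--Tresse, whereas the paper first builds the ODE as the pullback of $w''=0$ under the associated mapping $\mathcal F$ of \cite{nonminimal} (so it is automatically cubic in $w'$, with coefficients $I_0,\dots,I_3$ given by \eqref{e.4.7} and single-valued on $U\setminus X$ by uniqueness of the ODE satisfied by the Segre family), then uses the prenormal form to prove $I_0=I_1\equiv 0$ and the pole bounds, and only then extracts the $z$-polynomiality. Several of your steps do coincide with the paper's: the inversion via $\zeta=w'/w^m$ and the resulting $I_0=I_1=0$ and pole orders is exactly Proposition~\ref{p.4.4}; the coefficient matching for \eqref{ODEviadef} and the use of \eqref{reality} to get $A=\pm 3i\bar F$ is Proposition~5.4; the derivative swap in (iii) is verbatim; and your injectivity argument (Segre varieties are the solutions with initial data $w(0)=\bar b$, $w'(0)=\pm i\bar a\bar b^m$ in prenormal coordinates, and $M=\{p:p\in Q_p\}$) is a legitimate, if anything more explicit, version of the paper's one-line appeal to (ii).

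The genuine gap is the one you yourself flag as ``the main technical obstacle'': proving that the eliminated series $\sum h_{jkl}z^j(w'/w^m)^kw^l$ truncates to a cubic in $w'$ whose $(w')^2$- and $(w')^3$-coefficients are polynomials in $z$ of degrees $\le 1$ and $\le 3$. Invoking the Liouville/Tresse system does not close this. First, the cubic-in-$w'$ statement requires knowing the ODE is point-equivalent to $w''=0$ near each regular point, which is precisely the local existence of the maps into the quadric, i.e.\ the germs of $\mathcal F$ -- so you have not actually bypassed the associated mapping, and you still need its analytic continuation to see that the coefficients are globally defined single-valued holomorphic functions on all of $U\setminus X$ rather than only near $M$. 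Second, and more seriously, the $z$-degree bounds on $I_2$ and $I_3$ do not follow from the Liouville PDEs by inspection: those are two second-order PDEs in the two unknown functions $I_2,I_3$ and do not visibly force polynomiality in $z$. The paper obtains the degree bounds by actually integrating the system $I_0=I_1=0$ as ODEs in $z$ for the components $(f,g)$ of the trivializing map (Section~5.2), concluding that $f$ and $g$ are linear-fractional in $z$ as in \eqref{e.4.14}, and then computing $I_2,I_3$ explicitly from \eqref{fgaalpha} to get \eqref{i2}--\eqref{i3}; the relations \eqref{ODErelations} then drop out of the same computation (Proposition~5.3) rather than being read off the Liouville constraints. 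Without this integration step -- which is the heart of the proof and also feeds the linear-fractional representation \eqref{fralin} used throughout the rest of the paper -- your argument for part (i) and for $C=-A^2/9$ and the formula for $D$ remains an unverified assertion.
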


The main application of Theorem 3.3 is the possibility to
reformulate questions, concerning the initial hypersurface $M$, in
terms of the associated ODE $\mathcal E(M)$. This turns out to be
a powerful tool for the proofs of delicate facts concerning the
geometry of nonminimal hypersurfaces.

We start with the applications to the problem of analytic
continuation. Even though the defining equation of $M$ suggests
that one should consider Segre varieties of $M$ as graphs
$w=w(z)$, it appears more natural to consider them as graphs
$z=z(w)$ in appropriate local coordinates. This gives
characterization of nonminimal spherical hypersurfaces for which
the associated mapping $\mathcal F$ extend holomorphically to the
complex locus.

\begin{thm} Let $M\in \mathcal P_0$, $U$ be the associated neighbourhood, and
$\mathcal F$ the associated mapping. Then:

\smallskip

\noindent (i) There exist six (multiple-valued) analytic functions
$\alpha_j(w)$ and $\beta_j(w)$, $j=0,1,2$, in a punctured disc
$\Delta^*_\epsilon=\{0<|w|<\epsilon\}$ such that the mapping
$\mathcal F:\,U\setminus X\longrightarrow\CP{2}$ has the following
linear w.r.t. the variable $z$ representation in homogeneous
coordinates:
\begin{equation}\label{fralin}
\mathcal
F(z,w)=(\alpha_0(w)z+\beta_0(w),\alpha_1(w)z+\beta_1(w),\alpha_2(w)z+\beta_2(w))\end{equation}
In particular, $\mathcal F$ restricted to $U^0=\mathcal
F^{-1}(\CC{2})$, $U_0\subset U\setminus X$, is a linear-fractional
 w.r.t. $z$ mapping $U^0\longrightarrow\CC{2}$.
Moreover, $\mathcal F$ extends as a (multiple-valued) holomorphic
mapping $\mathbb{CP}^1\times \Delta^*_\epsilon \longrightarrow
\mathbb{CP}^2$ that is locally biholomorphic in $\CC{1}\times
\Delta^*_\epsilon$.

\smallskip

\noindent (ii) Each Segre variety $Q_p,\,p=(a,b)$, of $M$ with
$a,b\neq 0$, considered as a subset of the strip
$\mathbb{CP}^1\times \Delta^*_\epsilon$, extends to a graph
$\tilde Q_p=\{z=h_p(w)\}\subset \mathbb{CP}^1\times
\Delta^*_\epsilon$ of an appropriate (multiple-valued) analytic
mapping $h_p:\,\Delta^*_\epsilon\longrightarrow\CP{1}$. All
functions $h_p(w)$ satisfy the ODE $\mathcal E(M)$.\smallskip

\smallskip

\noindent (iii) The mapping $\mathcal F$ is single-valued if and
only if for each Segre variety $Q_p,\,p=(a,b),\,a,b\neq 0$, with
the extension $\tilde Q_p=\{z=h_p(w)\}$, the mapping $h_p(w)$ is
single-valued;

\smallskip

\noindent (iv) The mapping $\mathcal F$ extends to $X$
holomorphically if and only if for each Segre variety
$Q_p,\,p=(a,b),\,a,b \neq 0$, with the extension $\tilde
Q_p=\{z=h_p(w)\}$, the mapping $h_p(w)$ is single-valued and
extends to the origin holomorphically.\rm\end{thm}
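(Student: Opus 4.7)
My plan is to prove the four parts in order, using Theorem~3.3 (the associated ODE $\mathcal E(M)$) and the invariance of Segre varieties under $\mathcal F$ as the principal tools. The starting observation is that in the prenormal coordinates of Theorem~3.1 the identity $\varphi(z,0,\bar w)\equiv 0$ makes the horizontal disc $\{w=w_0\}$, for $w_0\in\Delta^*_\epsilon$, coincide with the Segre variety $Q_{(0,\bar w_0)}$ of $M$. Combined with the invariance property $\mathcal F(Q_\zeta)\subset Q'_{\mathcal F(\zeta)}$ and the fact that Segre varieties of $S^3$ are projective lines in $\mathbb{CP}^2$, this yields the key geometric consequence: for every $w_0\in\Delta^*_\epsilon$, the slice $z\mapsto\mathcal F(z,w_0)$ has image in a projective line $L_{w_0}\subset\mathbb{CP}^2$.

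\textbf{Part (i)} requires upgrading this to the linear-fractional form. I would exploit that $\mathcal E(M)$, having vanishing Cartan--Tresse invariants on $\Delta^*_\epsilon$, is locally point-equivalent to $z''=0$, and that $\mathcal F$ itself realizes this equivalence into the affine chart of $\mathbb{CP}^2$ in which Segre varieties of $S^3$ are affine lines. To pin down the parametrization of $L_{w_0}$ as a projective function of $z$, I would use the 1-parameter pencil $\{Q_{(a,\bar w_0)}\}_a$ of Segre varieties through $(0,w_0)$, which $\mathcal F$ maps biholomorphically to the pencil of projective lines through $\mathcal F(0,w_0)$. Combining this pencil-to-pencil correspondence with the image restriction $\mathcal F(z,w_0)\in L_{w_0}$ and the vanishing of the Cartan--Tresse invariants forces $\mathcal F(z,w_0)$ to depend projectively on $z$. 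Writing the result in homogeneous coordinates of $\mathbb{CP}^2$ then produces the $\alpha_j(w),\beta_j(w)$ as multi-valued analytic functions on $\Delta^*_\epsilon$, and the extension to $\mathbb{CP}^1\times\Delta^*_\epsilon$ with local biholomorphy on $\mathbb{C}\times\Delta^*_\epsilon$ is immediate from the linear-fractional form together with local biholomorphy of $\mathcal F$ on $U\setminus X$.

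\textbf{Part (ii)} follows from Theorem~3.3(iii): for $p=(a,b)$ with $a,b\neq 0$, the Segre variety $Q_p$ is locally a graph $z=h_p(w)$ near $w=b$, and $h_p$ satisfies the holomorphic second-order ODE $\mathcal E(M)$ on $\Delta^*_\epsilon$. The standard existence and continuation theorem for holomorphic ODEs extends $h_p$ along any path in $\Delta^*_\epsilon$ as a multi-valued analytic function. The passage from values in $\mathbb{C}$ to values in $\mathbb{CP}^1$ comes from part (i): the image $\mathcal F(\tilde Q_p)$ is a projective line $L_p\subset\mathbb{CP}^2$, and inverting the linear-fractional representation from (i) on each slice identifies $h_p(w)$ with a well-defined point of $\mathbb{CP}^1$ for every $w\in\Delta^*_\epsilon$, taking the value $\infty$ precisely when $\mathcal F(\cdot,w)$ would blow up along $Q_p$.

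\textbf{Parts (iii) and (iv).} The forward direction of (iii) is immediate, since single-valuedness of $\mathcal F$ propagates to the line $L_p=\mathcal F(\tilde Q_p)$ and hence, via (i), to $h_p$. For the converse, if every $h_p$ is single-valued then the 2-parameter family $\{L_p\}$ of image lines is single-valued on $\Delta^*_\epsilon$; since three generic lines of this family determine the linear-fractional normalization uniquely, the coefficients $\alpha_j,\beta_j$ in (i) must also be single-valued (up to the common scalar ambiguity in homogeneous coordinates), whence so is $\mathcal F$. Statement (iv) is then a combination of (iii) with Theorem~2: the latter characterizes holomorphic extension of $\mathcal F$ to $X$ by the existence of each non-horizontal Segre variety as a holomorphic graph on the \emph{full} disc $\{|w|<\epsilon\}$, which is exactly the conjunction of single-valuedness on $\Delta^*_\epsilon$ and holomorphic extension to $w=0$. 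The hard part, I expect, is the rigidity argument in (i): \emph{a priori} the slice $z\mapsto\mathcal F(z,w_0)$ is merely a holomorphic parametrization of $L_{w_0}$ and could be an arbitrary meromorphic function, and extracting projectivity requires carefully coupling the pencil-to-pencil correspondence with the sphericity of $\mathcal E(M)$.
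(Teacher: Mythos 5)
Your reduction of everything to the slice maps $z\mapsto\mathcal F(z,w_0)$ is the right geometric picture, and your opening observation (that in prenormal coordinates the horizontal discs $\{w=w_0\}$ are themselves Segre varieties, so each slice lands in a projective line $L_{w_0}$) is correct. But that observation only recovers the identity $I_0\equiv 0$ in the notation of \eqref{e.4.7}, i.e.\ it only tells you that the image of each slice lies \emph{in} a line; it says nothing about \emph{how} the slice is parametrized. The entire content of part (i) is that this parametrization is linear-fractional in $z$, and at that exact point your argument stops: you assert that the pencil-to-pencil correspondence together with ``vanishing of the Cartan--Tresse invariants'' forces projectivity, and you yourself flag this as the hard step you have not carried out. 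It genuinely does not follow from those ingredients: Cartan--Tresse vanishing holds for any spherical hypersurface and does not make a straightening map linear-fractional in a chosen variable, and the map $z\mapsto a$ induced by the pencil $\{Q_{(a,\bar w_0)}\}_a$ is not projective a priori. What the paper actually uses is the second identity $I_1\equiv 0$ --- i.e.\ that the associated ODE $w''=\Phi(z,w,w')$ has no term linear in $w'$ either --- which is extracted from the full prenormal expansion (the substitution $\zeta=w'/w^m$ in Proposition 5.2 shows $\Phi=O(\zeta^2)$), and then an explicit two-step integration of the resulting first-order ODEs for $f$ with $g=\lambda(w)f+\mu(w)$. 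Without $I_1\equiv0$ and that integration, part (i) is unproved, and parts (ii)--(iv) all lean on the representation \eqref{fralin}.

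There is a second, structural problem in part (iv): you invoke Theorem 2 of the Introduction, but the paper derives Theorem 2 \emph{from} Theorem 3.4, so this is circular. Moreover, even granting the equivalence, the hard direction of (iv) is not the bookkeeping you describe: once all $h_p$ are meromorphic at $0$ one can scale the $\alpha_j,\beta_j$ into $\mathcal O(0)$, but one must still exclude a common zero $\alpha_j(0)z_0+\beta_j(0)=0$ for all $j$ (an indeterminacy point of $\mathcal F$ on $X$). The paper rules this out by choosing a line through the would-be indeterminacy value that misses the target quadric and observing it must nevertheless meet $\mathcal F(M\setminus X)$ along a real curve through the origin; nothing in your proposal addresses this. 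Part (ii) and the forward direction of (iii) are fine, and your converse of (iii) (recovering the coefficients from finitely many single-valued $h_p$) is essentially the paper's argument via \eqref{hp}, so those portions would survive once (i) is actually established.
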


\smallskip

Theorem 3.4 implies Theorem 2 of Introduction.

\smallskip

We will need the following existence theorem for singular complex
ODEs, which is applicable, in particular, to the ODE $\mathcal
E(M)$ of Theorem 3.3, provided the associated mapping is
single-valued.

\begin{thm} Consider a second order singular at the origin complex ODE\,
$\mathcal E$, given by
\begin{equation}\label{bb}
z''=\frac{1}{w}P(z,w)z'+\frac{1}{w^2}Q(z,w),
\end{equation}
with holomorphic in some polydisc
$\Delta_\delta\times\Delta_\epsilon$ functions $P(z,w)$ and
$Q(z,w)$.
 Suppose the ODE $\mathcal E$ satisfies the following condition: if a
local solution $z=\psi(w)$ of $\mathcal E$ near some point
$w_0\in\Delta^*_\epsilon$ admits an analytic continuation to an
annulus $\epsilon'<|w|<\epsilon''$,
$0<\epsilon'<\epsilon''<\epsilon$, then the analytic continuation
is single-valued. Suppose also that there exists $z_0\in
\Delta_\delta$ such that $Q(z_0,0)=0$. Then the ODE $\mathcal E$
has a holomorphic at the origin solution $z=h(w)$ with $h(0)=z_0$.
\rm \end{thm}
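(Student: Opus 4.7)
The plan is to reduce the scalar second-order ODE $\mathcal E$ to a first-order Briot-Bouquet system, look for a formal power series solution through $(z_0, 0)$, and use the no-monodromy hypothesis to eliminate the logarithmic obstructions that arise in the resonant case. Substituting $u = wz'$ turns $\mathcal E$ into the first-order singular system
\begin{equation*}
wY' = F(Y,w), \qquad Y = \begin{pmatrix} z \\ u \end{pmatrix}, \qquad F(Y,w) = \begin{pmatrix} u \\ (1+P(z,w))u + Q(z,w) \end{pmatrix},
\end{equation*}
with singular equilibrium at $Y_0 = (z_0,0)$ since $Q(z_0,0) = 0$. The linearization $A = (\partial F/\partial Y)(Y_0,0)$ has characteristic polynomial $\lambda^2 - (1+P(z_0,0))\lambda - Q_z(z_0,0) = 0$, with roots $\lambda_1,\lambda_2$. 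The hypothesis on scalar solutions transfers to the system, since $z$ determines $u = wz'$ and conversely.

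Next, I look for a formal solution $Y(w) = Y_0 + \sum_{n \geq 1} Y_n w^n$. Matching coefficients of $w^n$ in $wY' = F(Y,w)$ yields the recursion
\begin{equation*}
(nI - A)\,Y_n = R_n(Y_1,\dots,Y_{n-1}),
\end{equation*}
where $R_n$ depends polynomially on the earlier coefficients and on the Taylor coefficients of $F$ at $(Y_0,0)$. In the nonresonant case, where neither $\lambda_i$ is a positive integer, the recursion is uniquely solvable and the resulting formal series converges by the classical Briot-Bouquet majorant argument (see \cite{ilyashenko},~\cite{laine}), yielding a holomorphic $Y(w)$ with $Y(0) = Y_0$.

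The substantive case is the resonant one, where some $\lambda_i = k$ is a positive integer. At step $n = k$ the operator $kI - A$ is singular and the recursion is solvable only when $R_k$ lies in the image of $kI - A$; otherwise the formal ansatz must be enriched by a term $c\,w^k \log w$ with $c \neq 0$. Here I invoke the no-monodromy hypothesis: by the classical sectorial existence theorem for Briot-Bouquet systems (Hukuhara-Kimura-Matuda; see \cite{laine}), in each sufficiently narrow sector about $w = 0$ there is a genuine holomorphic solution $Y^{(S)}(w)$ with $Y^{(S)}(w) \to Y_0$ whose asymptotic expansion matches the formal one, log terms and all. Analytically continuing $Y^{(S)}$ around the origin inside the annulus produces a single-valued function by the hypothesis; but a nonzero term $c\,w^k \log w$ in the asymptotic expansion is incompatible with single-valuedness, since $\log w$ picks up $2\pi i$ along the loop and would shift $Y^{(S)}$ by $2\pi i\,c\,w^k \not\equiv 0$. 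Hence $c = 0$, and the same argument at every positive-integer resonance shows that an honest formal power series solution exists. Its convergence again follows from the Briot-Bouquet majorant argument after putting $A$ in Jordan form, and the first coordinate $h(w)$ of the resulting $Y(w)$ solves $\mathcal E$ with $h(0) = z_0$.

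The main obstacle is the resonant case: both the construction of the correct sectorial solutions and the translation of the no-monodromy hypothesis into the vanishing of the $\log$ coefficients require care. A more geometric alternative would be to construct sectorial solutions directly, use the hypothesis to patch them across Stokes rays into a single-valued holomorphic function on a punctured neighborhood of $0$, and then extend across $w = 0$ by Riemann's removable singularity theorem once boundedness is read off from the asymptotic expansion.
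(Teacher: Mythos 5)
Your overall skeleton matches the paper's: reduce to the Briot--Bouquet system $wY'=F(Y,w)$ via $u=wz'$, set up the recursion $(nI-A)Y_n=R_n$, dispose of the nonresonant case, and then use the no-monodromy hypothesis to kill the obstruction at a positive-integer eigenvalue. The convergence of the formal series is also not an issue (the paper cites a result of Gromak--Laine--Shimomura that any formal power series solution of such a system converges, with no eigenvalue hypothesis). The divergence is entirely in the resonant case, and there your argument has a genuine gap.

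You propose to take a sectorial solution $Y^{(S)}$ asymptotic to the log-enriched formal series and ``analytically continue it around the origin inside the annulus,'' then invoke the hypothesis to get single-valuedness and conclude $c=0$. Two things break. First, the hypothesis of the theorem is \emph{conditional}: it only says that a solution which \emph{does} extend to a full annulus $\{\epsilon'<|w|<\epsilon''\}$ is single-valued there. For a nonlinear equation, the sectorial solution has no a priori reason to continue all the way around the annulus --- it can hit a movable singularity --- so the hypothesis simply does not apply to it. Second, even granting single-valued continuation around a circle of fixed radius, reading off $c=0$ from the $w^k\log w$ term requires the asymptotic expansion to remain valid as you go around, i.e.\ on a sector of opening greater than $2\pi$ on the universal cover of the punctured disc; the sectorial existence theorems you cite give asymptotics only in a fixed (typically narrow) sector near $w=0$, not along a loop at fixed $|w|$. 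The paper closes exactly this gap with the Poincar\'e small parameter method: after rescaling $w=\varepsilon w^*$ the system becomes a perturbation of one with the global constant solution $Y\equiv Y_0$, and Poincar\'e's theorem guarantees that for small $\varepsilon$ the perturbed solution is defined and holomorphic along the \emph{entire} unit circle, with a uniformly convergent expansion $\sum y_r(w)\varepsilon^r$ whose coefficients solve explicit inhomogeneous Euler systems; a nonvanishing resonance obstruction (or a non-integer or repeated eigenvalue of $A$ --- a case you do not address but which the paper must also rule out, Lemma~7.3) forces a $\log w$ or $w^\lambda$ term in $y_1$, hence multivaluedness of an actual solution on an annulus, contradicting the hypothesis. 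To repair your proof you would need either this device or some other mechanism producing a genuine solution defined on a full annulus whose monodromy detects the log term.
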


Combination of Theorem 3.4 with Theorem 3.5 yields the following
result.

\begin{thm}
Let $M\in\mathcal P_0$, $U$ be the associated neighbourhood,
$\mathcal F$ be the associated mapping, and $m\geq 1$ be the
nonminimality order of $M$ at $0$. If $M$ is of Fuchsian type,
then $\mathcal F$ extends to $X$ holomorphically if and only if it
is single-valued. In particular, if $m=1$, then $\mathcal F$
extends holomorphically to $X$ if and only if it is single-valued.
 \end{thm}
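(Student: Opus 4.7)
The ``only if'' direction is immediate. For ``if'', suppose $\mathcal F$ is single-valued on $U\setminus X$. By Theorem 3.4(iv) combined with Theorem 3.4(iii), it suffices to show that each Segre-variety extension $h_p(w)$ (for $p=(a,b)$ with $a,b\neq 0$) extends holomorphically to $w=0$. Each such $h_p$ is a single-valued solution of the associated ODE $\mathcal E(M)$ of \eqref{d2z}, and my plan is to deduce the desired extension from Theorem 3.5, combined with the linear-fractional structure of $\mathcal F$ given by Theorem 3.4(i).

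When $m=1$ (which covers the ``in particular'' clause, since the Fuchsian conditions \eqref{fuchstype} are then automatic), the ODE $\mathcal E(M)$ is already of the form required by Theorem 3.5, with $P(z,w)=A(w)z+B(w)$ and $Q(z,w)=C(w)z^{3}+D(w)z^{2}+E(w)z+F(w)$. The no-monodromy hypothesis of Theorem 3.5 is satisfied because every solution of $\mathcal E(M)$ is realized as some $h_p$ and is therefore single-valued on $\Delta_{\epsilon}^{*}$. Choosing $z_{0}\in\Delta_{\delta}$ with $Q(z_{0},0)=0$ is possible because $Q(\cdot,0)$ is a polynomial of degree at most three, with the degenerate possibility of $Q(\cdot,0)$ being a nonzero constant ruled out by the algebraic relations \eqref{ODErelations}. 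For the general Fuchsian case $m>1$, the bounds \eqref{fuchstype} on $\operatorname{ord}_{0}\Phi_{kl}$ translate via \eqref{ODEviadef}--\eqref{ODErelations} into precise orders of vanishing of $A,B,C,D,E,F$ at $w=0$; these orders are calibrated so that an appropriate scaling substitution (of the form $z=w^{k}\zeta$ with $k$ dictated by \eqref{fuchstype}) transforms $\mathcal E(M)$ into an ODE of the form $\zeta''=\tilde P(\zeta,w)/w\cdot\zeta'+\tilde Q(\zeta,w)/w^{2}$ covered by Theorem 3.5. In either situation Theorem 3.5 produces \emph{one} holomorphic-at-origin solution of $\mathcal E(M)$.

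The genuine difficulty is upgrading ``one holomorphic solution'' to ``every $h_p$ extends holomorphically''. My approach exploits the linear-fractional representation \eqref{fralin}: since $\mathcal F$ sends each $Q_p$ to a projective line in $\CP{2}$, any $h_p$ can be written as $h_p(w)=-(\kappa\cdot\beta(w))/(\kappa\cdot\alpha(w))$ for a $w$-independent covector $\kappa\in\CC{3}$ depending on $p$. Consequently the holomorphic extension of $\mathcal F$ across $X$ reduces to showing that the functions $\alpha_{j}(w),\beta_{j}(w)$---already single-valued on $\Delta_{\epsilon}^{*}$ by hypothesis---are meromorphic at $w=0$, after which rescaling of the homogeneous coordinates by a common power of $w$ produces a genuine holomorphic extension. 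To derive meromorphicity, I would apply Theorem 3.5 at each root of $Q(\cdot,0)=0$ so as to produce several holomorphic solutions, and then use the vanishing of the Cartan--Tresse invariants of $\mathcal E(M)$ at regular points (a consequence of sphericity of $M\setminus X$): three generic holomorphic solutions determine the linearizing transformation conjugating $\mathcal E(M)$ to $z''=0$, and this transformation inherits enough regularity at $w=0$ to force the $\alpha_{j},\beta_{j}$ to be meromorphic there. The delicate remaining case is when $Q(\cdot,0)$ does not admit enough distinct roots, which I would handle either by a perturbation argument or by a direct analysis of the linear Fuchsian system satisfied by $(\alpha_{j},\beta_{j})$ coming from the conjugacy of $\mathcal E(M)$ to the projective-line ODE of $S^{3}$.
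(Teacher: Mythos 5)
Your reduction of the theorem to Theorem 3.5 is exactly the paper's: the ``only if'' direction is trivial, Theorem 3.4(iii)--(iv) convert the problem into meromorphy at $w=0$ of all the extended Segre graphs $h_p$, the Fuchsian conditions \eqref{fuchstype} are (via \eqref{ODEviadef}--\eqref{ODErelations}) precisely what makes the substitution $Z=zw^{l}$, $l=\mathrm{ord}_0F-m+1$, bring $\mathcal E(M)$ to a Briot--Bouquet form $\mathcal E^{r}(M)$ covered by Theorem 3.5 (this is Proposition 6.2 in the paper), and your choice of $z_0$ as a root of the degree-$\le 3$ polynomial $Q(\cdot,0)$, with the nonzero-constant case excluded through $C=-A^2/9$ and $A=\pm3i\bar F$, is also how the paper proceeds. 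You have also correctly located the genuine difficulty: passing from \emph{one} holomorphic solution to meromorphy at $0$ of all six coefficients $\alpha_j,\beta_j$ in \eqref{fralin}.

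It is in resolving that difficulty that your argument has a real gap. Your primary plan --- produce three holomorphic solutions from the roots of $Q(\cdot,0)$ and let them determine the linearizing transformation --- fails on two counts. First, Theorem 3.5 guarantees only one holomorphic solution per root, and the cubic $Q(\cdot,0)$ may have a single (triple) root, so three solutions need not exist; even when they do, nothing guarantees they are in general position. Second, three lines in $\CP{2}$ impose only six conditions on an element of $\mathrm{PGL}(3,\CC{})$, which has eight parameters, so they do not determine the transformation; and the assertion that the transformation ``inherits enough regularity at $w=0$'' is precisely the statement to be proved, not a consequence of its pointwise determination. Your fallback (``a direct analysis of the linear Fuchsian system satisfied by the $\alpha_j,\beta_j$'') is the right idea but is left unexecuted, and it cannot be run without the following normalization, which is the paper's key move (Proposition 6.10): compose $\widehat{\mathcal F}$ with an element of $\mathrm{Aut}(\CP{2})$ sending the image line of the \emph{single} holomorphic solution $Z=h(W)$ to the line at infinity, so that in the representation \eqref{fgaalpha} one gets $\hat\delta=-h\in\mathcal O(0)$. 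Only after this step do the identities $I_0=I_1\equiv 0$ and \eqref{i2}--\eqref{i3}, rewritten as \eqref{formulas}, yield a \emph{scalar second-order linear Fuchsian} ODE for $\hat\alpha$ (its coefficients involve $\hat\delta$, whose holomorphy at $0$ is what makes the singularity Fuchsian); the Fuchs theorem together with single-valuedness then gives $\hat\alpha\in\mathcal M(0)$, the remaining functions $\hat\beta,\hat k,\hat a,\hat b$ follow from \eqref{formulas} via the elementary fact that $Wu'/u\in\mathcal O(0)$ forces $u\in\mathcal M(0)$ for single-valued $u$, and Theorem 3.4(iv) concludes. Without the observation that one holomorphic solution can be used to control $\hat\delta$ and thereby Fuchsianize the linear system for the coefficients, your proof does not close.
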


Theorem 3.6 implies Theorem 3 stated in Introduction. Next we use
the above results to study the behaviour of local automorphisms
for real hypersurfaces at nonminimal points. We formulate the
Dimension Conjecture, mentioned in Introduction, in two different
versions.

\medskip

\noindent{\bf Dimension Conjecture (weak version).} Let
$(M,0)\subset\CC{2}$ be a smooth real-analytic Levi nonflat germ.
Then the following bound for the dimension of the stability
algebra of $M$ at $0$ holds:
$$\mbox{dim}\,\mathfrak{aut}\,(M,0)\leq \mbox{dim}\,\mathfrak{aut}(S^3,o)=5,\,o\in S^3.$$

\medskip

\noindent{\bf Dimension Conjecture (strong version).} Let
$(M,0)\subset\CC{2}$ be a smooth real-analytic Levi nonflat germ,
and suppose that $M$ is not spherical at $0$. Then the following
bound for the dimension of the infinitesimal automorphism algebra
of $M$ at $0$ holds:
$$\mbox{dim}\,\mathfrak{hol}\,(M,0)\leq 5.$$

As explained in Section 8 only the nonminimal case remained open
for the complete proof of the strong version of the Dimension
Conjecture. To treat this case, we first prove the following
embedding theorem for the infinitesimal automorphism algebra of a
nonminimal pseudospherical hypersurface in $\CC{n}$.

\begin{thm} Let $M\subset\CC{n},\,n\geq 2$, be a real-analytic nonminimal at the origin pseudospherical hypersurface.
Let $\sigma$ be the monodromy operator of $M$. Then the
infinitesimal automorphism algebra $\mathfrak{hol}\,(M,0)$ can be
injectively embedded into the subalgebra
$c=z(\sigma)\cap\mathfrak{hol}\,(\mathcal Q)$, where
$z(\sigma)\subset\mathfrak{hol}\,(\CP{n})$ is the centralizer of
the element $\sigma\subset\mbox{Aut}\,(\CP{n})$.\end{thm}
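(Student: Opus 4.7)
The plan is to define an injection $\Psi\colon \mathfrak{hol}(M,0) \hookrightarrow z(\sigma)\cap \mathfrak{hol}(\mathcal Q)$ by pushforward via the associated mapping $\mathcal F$, and verify the required properties. Given $Z\in\mathfrak{hol}(M,0)$, its holomorphic flow $\phi_t$ is defined in a neighbourhood of $0$. Since the complex locus $X$ is a biholomorphic invariant of $M$, the flow $\phi_t$ preserves $X$; after shrinking $U$ we may arrange that $\phi_t$ restricts to a biholomorphism of $U\setminus X$ for small $|t|$.

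Fix $p\in (M\setminus X)\cap U$ and a local equivalence $\mathcal F_p\colon (M,p)\to(\mathcal Q,p')$ whose analytic extension is $\mathcal F$. The conjugate flow $\tilde\phi_t := \mathcal F_p\circ\phi_t\circ\mathcal F_p^{-1}$ is a local biholomorphism flow near $p'$ preserving $\mathcal Q$ at the Levi nondegenerate point $p'$; by the classical Poincar\'e phenomenon, every such local automorphism of $\mathcal Q$ extends uniquely to a global element of $\mbox{Aut}(\CP{n})$ preserving $\mathcal Q$. The generator $\tilde Z := \Psi(Z)$ lies in $\mathfrak{hol}(\mathcal Q)\subset\mathfrak{hol}(\CP{n})$, and functoriality of pushforward makes $\Psi$ a Lie algebra homomorphism.

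For the centralizer property, which is the main step, I would lift to the universal cover $\pi\colon\widetilde{U\setminus X}\to U\setminus X$; by Section~2.6 the multi-valued $\mathcal F$ lifts to a single-valued locally biholomorphic map $\widetilde{\mathcal F}\colon\widetilde{U\setminus X}\to\CP{n}$ satisfying $\widetilde{\mathcal F}\circ g = \sigma\circ\widetilde{\mathcal F}$ for the deck transformation $g$ generating $\pi_1(U\setminus X)$. For each $x\in\widetilde{U\setminus X}$, pick a neighbourhood $V_x$ on which $\widetilde{\mathcal F}$ is biholomorphic onto its image, and let $\tilde Z_x\in\mathfrak{hol}(\CP{n})$ be the global extension of $(\widetilde{\mathcal F}|_{V_x})_*Z$ provided by the Poincar\'e phenomenon. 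On overlapping neighbourhoods, the local pushforwards of the single-valued $Z$ agree, so $\tilde Z_x$ is independent of $x$ by connectedness of $\widetilde{U\setminus X}$; write $\tilde Z$ for the common value. Comparing the pushforwards at $x$ and $gx$ using the factorization $\widetilde{\mathcal F}|_{V_{gx}} = \sigma\circ\widetilde{\mathcal F}|_{V_x}\circ g^{-1}$ together with the $g$-invariance of $Z$ on the cover yields
\begin{equation*}
(\widetilde{\mathcal F}|_{V_{gx}})_*Z \;=\; \sigma_*\bigl((\widetilde{\mathcal F}|_{V_x})_*Z\bigr).
\end{equation*}
Since the left side equals $\tilde Z$ on a neighbourhood of $\sigma\widetilde{\mathcal F}(x)$ and the right side equals $\sigma_*\tilde Z$ there, we obtain $\sigma_*\tilde Z = \tilde Z$ on an open subset of $\CP{n}$, hence globally. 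Thus $\tilde Z\in z(\sigma)$.

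Injectivity is immediate: if $\Psi(Z)=0$, then $\tilde\phi_t=\mathrm{id}$, so $\phi_t=\mathcal F_p^{-1}\circ\tilde\phi_t\circ\mathcal F_p=\mathrm{id}$ near $p$, hence $Z\equiv 0$ by unique continuation of holomorphic vector fields. The principal obstacle is the centralizer step above: one must carefully reconcile the multi-valued nature of $\mathcal F$ with the single-valued nature of $Z$, and exploit the rigidity of local automorphisms of a hyperquadric (unique global extension) to translate the monodromy of $\mathcal F$ into the invariance $\sigma_*\tilde Z=\tilde Z$.
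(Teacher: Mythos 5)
Your proof is correct and follows essentially the same route as the paper: push $Z$ forward by the associated mapping, use the Chern--Moser rigidity of local automorphisms of the hyperquadric to globalize, and extract the commutation with $\sigma$ from the monodromy. The only (cosmetic) difference is that the paper derives $\sigma=\tau_t\circ\sigma\circ\tau_t^{-1}$ by comparing the monodromy matrices of the flows $\mathcal F\circ\psi_t$ via \eqref{changesigma}, whereas you obtain the infinitesimal version $\sigma_*\tilde Z=\tilde Z$ directly from the equivariance $\widetilde{\mathcal F}\circ g=\sigma\circ\widetilde{\mathcal F}$ on the universal cover.
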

\smallskip

Theorem 3.7, while being effective for hypersurfaces with
nontrivial monodromy, does not give new information in the case of
trivial monodromy. To treat the latter case, we use the
linear-fractional representation of $\mathcal F$ asserted in
Theorem 3.4, which gives the following.

\begin{thm} For any hypersurface $M\in\mathcal P_0$ the bound $\mbox{dim}\,\mathfrak{hol}\,(M,0)\leq 5$
 holds. \end{thm}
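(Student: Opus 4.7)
The plan is to use the linear-fractional form of the associated mapping $\mathcal{F}$ from Theorem 3.4(i) to show that $\mathcal{F}$ sends, in a well-defined limit sense, the complex locus $X$ to a single Segre variety $Q_o$ of $S^3$, and then to conclude that any infinitesimal automorphism of $M$ pushes forward under $\mathcal{F}$ to a global symmetry of $S^3$ preserving $Q_o$, hence lying in the $5$-dimensional isotropy algebra $\mathfrak{aut}(S^3,o)$.

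First I would construct the point $o$. Using the representation
\[ \mathcal{F}(z,w)=(\alpha_0(w)z+\beta_0(w),\,\alpha_1(w)z+\beta_1(w),\,\alpha_2(w)z+\beta_2(w)) \]
from Theorem 3.4(i), I restrict $\mathcal{F}$ to the slice $\{z=0\}$: on a fixed branch the projective map $w\mapsto[\beta_0(w):\beta_1(w):\beta_2(w)]$ is meromorphic on $\Delta^*_\epsilon$ and, after clearing common zeros/poles at $w=0$, extends holomorphically across $w=0$ as a map into $\CP{2}$. Put $o=\lim_{u\to 0^+}\mathcal{F}(0,u)$. Since the prenormal defining equation of $M\in\mathcal{P}_0$ has no $z$-free terms, the points $(0,u)$ with $u\in\mathbb{R}$, $u>0$, small, lie in $M\setminus X$, so $\mathcal{F}(0,u)\in S^3$ and therefore $o\in S^3$ by closedness of $S^3$ in $\CP{2}$.

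Next I would identify the ``$\mathcal{F}$-image'' of $X$ with the Segre variety $Q_o\subset\CP{2}$. A direct inspection of the exponential defining equation in prenormal form gives $\varphi(z,0,\bar c)\equiv 0$, so for each $c\in\Delta^*_\epsilon$ the horizontal slice $\{w=c\}$ is precisely the Segre variety $Q_{(0,\bar c)}$ of $M$. By Segre-invariance of $\mathcal{F}$, the image $\mathcal{F}(\{w=c\})=Q_{\mathcal{F}(0,\bar c)}$ is a Segre line of $S^3$, and as $c\to 0$ these lines converge to $Q_o$. Now fix $Y\in\mathfrak{hol}(M,0)$ with local flow $\phi_t$. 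The complex locus $X$ is biholomorphically invariant (being the unique maximal complex subvariety of $M$ through any of its points), so $\phi_t$ preserves $X$. A chosen branch of $\mathcal{F}$ conjugates $\phi_t$ to a local flow of biholomorphisms of $S^3$ whose generator, being a germ of an infinitesimal automorphism of the sphere, extends uniquely to a global element $\tilde Y\in\mathfrak{hol}(S^3)\cong\mathfrak{su}(2,1)$; the correspondence $Y\mapsto\tilde Y$ is an injective Lie-algebra homomorphism because $\mathcal{F}$ is a local biholomorphism on $M\setminus X$. Passing to the limit $c\to 0$ in the intertwining identity $\tilde\phi_t(\mathcal{F}(\{w=c\}))=\mathcal{F}(\phi_t(\{w=c\}))$ yields $\tilde\phi_t(Q_o)=Q_o$. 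Since $S^3$ is strictly pseudoconvex, $Q_o\cap S^3=\{o\}$, and hence $\tilde\phi_t(o)=o$, so $\tilde Y\in\mathfrak{aut}(S^3,o)$. This gives the bound $\dim\mathfrak{hol}(M,0)\leq\dim\mathfrak{aut}(S^3,o)=\dim\mathfrak{su}(2,1)-\dim S^3=5$.

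The main obstacle I anticipate is the careful justification of the limit arguments in the multivalued case: distinct branches of $\mathcal{F}$ yield points $o$ related by the monodromy $\sigma$, and one must verify that the pushforward construction $Y\mapsto\tilde Y$ is well-defined modulo this branch ambiguity, injective, and that in each branch the resulting $\tilde Y$ lies in the isotropy algebra of the corresponding $o$, so that the dimension bound is branch-independent.
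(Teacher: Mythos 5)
Your strategy works only in one of the three cases the theorem actually requires, and it breaks down precisely in the hardest one. The crux is your very first step: the claim that $w\mapsto[\beta_0(w):\beta_1(w):\beta_2(w)]$ extends across $w=0$ after clearing common factors, so that $o=\lim_{u\to 0^+}\mathcal F(0,u)$ exists in $S^3$. Theorem~3.4 only gives that the $\alpha_j,\beta_j$ are (possibly multiple-valued) analytic in the punctured disc; nothing forces them to be meromorphic at $w=0$, and indeed by Theorem~3.4(iv) they are meromorphic at $0$ \emph{exactly when} $\mathcal F$ extends holomorphically to $X$ --- which is one of the things one cannot assume. The paper's own Example~6.7 is a concrete counterexample to your limit: for $M^m_0$ the associated map is $\Lambda_m(z,w)=\bigl(z,\,e^{\frac{2i}{1-m}w^{1-m}}\bigr)$, single-valued but with an essential singularity along $X$, and $\Lambda_m(0,u)$ oscillates on the unit circle of the $W$-axis as $u\to 0^+$ with no limit. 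So the point $o$, and with it the whole ``$\mathcal F(X)\to Q_o$'' argument, does not exist in general. (Your closing paragraph flags the multivalued case as the anticipated difficulty, but that case is actually the easy one: nontrivial monodromy forces $\mathfrak{hol}(M,0)$ into the centralizer of a nonscalar element of $\mathrm{PGL}(3,\CC{})$, giving dimension $\leq 4$. The genuinely hard case is trivial monodromy with no extension to $X$.)

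For comparison, the paper splits into three cases. (1) Nontrivial monodromy: Theorem~3.7 embeds $\mathfrak{hol}(M,0)$ into $z(\sigma)\cap\mathfrak{hol}(\mathcal Q)$ and a Jordan-form count gives $\dim\leq 4$. (2) $\mathcal F$ extends to $X$: then $\mathcal F(X)$ is a single point $o'\in S^3$ (a connected countable union of analytic sets in $S^3$), and Proposition~8.3 embeds $\mathfrak{hol}(M,0)$ into $\mathfrak{aut}(S^3,o')$, dimension $5$ --- this is the honest version of your limit argument, and note the image of $X$ is a point, not a Segre line. (3) $\mathcal F$ single-valued but not extendable: here no limit point is available, and the paper instead shows (Proposition~9.1) that the pushed-forward symmetry algebra $\mathfrak h^*\subset\mathfrak{sl}(3,\CC{})$ cannot contain a two-dimensional translation subalgebra $\mathrm{span}\{\partial_{z^*},\partial_{w^*}\}$ --- because pulling those two fields back through the linear-fractional form of $\mathcal F$ and demanding that they extend to $X$ forces the coefficients $\alpha,\beta,a,b,\delta$ into $\mathcal M(0)$ and hence forces $\mathcal F$ to extend, a contradiction --- and then combines this with the classification of $\geq 5$-dimensional subalgebras of $\mathfrak{sl}(3,\CC{})$ (Propositions~9.2--9.3), all of which do contain such a translation pair after conjugation, to get $\dim\leq 4$. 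To repair your proof you would need to supply an argument of this third kind; the limit construction cannot be salvaged.
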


 Theorem 3.8 implies Theorem 1 in the introduction. Examples obtained in
\cite{belnew} and~\cite{kl} show that the bound in  this theorem
is indeed sharp. Combined with other known results on
automorphisms of real-analytic hypersurfaces in $\CC{2}$, Theorem
3.7 yields the strong version of the Dimension Conjecture.

\begin{thm} The Strong Dimension Conjecture holds true for any smooth
real-analytic hypersurface $M\ni 0$. \end{thm}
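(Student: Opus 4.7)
The plan is to reduce the Strong Dimension Conjecture to Theorem~3.8, which handles the principal new case, by invoking classical results in CR-geometry for the remaining possibilities. Let $(M,0)\subset\CC{2}$ be a real-analytic Levi nonflat germ that is not spherical at $0$. I would split the argument according to whether $M$ is minimal at $0$.

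If $M$ is minimal at $0$, then either $M$ is Levi nondegenerate at $0$---in which case non-sphericity together with the classical Chern--Moser dimension gap (Kruzhilin--Loboda, etc.) yields $\dim\mathfrak{hol}(M,0)\le 5$---or $M$ is Levi degenerate of finite type at $0$, and the Beloshapka--Ezhov--Kolar classification \cite{kolar} of finite type hypersurfaces in $\CC{2}$ delivers the same bound.

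If $M$ is nonminimal at $0$, let $X\ni 0$ denote the complex hypersurface contained in $M$. First note that $M$ is automatically non-spherical at $0$: strictly pseudoconvex hypersurfaces contain no germs of complex curves, so a germ biholomorphic to $(S^3,o)$ would necessarily be minimal. Since $M$ is Levi nonflat, the Levi nondegenerate locus is open and dense in $M\setminus X$, and I would distinguish two cases. If $M$ is spherical at every Levi nondegenerate point of $M\setminus X$ in some neighborhood of $0$, Theorem~3.1 supplies prenormal coordinates, after shrinking which $M\in\mathcal P_0$, and Theorem~3.8 directly gives $\dim\mathfrak{hol}(M,0)\le 5$. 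Otherwise there exists a Levi nondegenerate non-spherical point $p\in M\setminus X$ arbitrarily close to $0$. Given any $k$ linearly independent germs in $\mathfrak{hol}(M,0)$, I extend them to a common open neighborhood $U\ni 0$ and restrict them to germs at $p\in U$; the restrictions remain linearly independent in $\mathfrak{hol}(M,p)$ by unique continuation of holomorphic vector fields. Choosing $p$ as above yields $k\le\dim\mathfrak{hol}(M,p)\le 5$ from the Chern--Moser gap, whence $\dim\mathfrak{hol}(M,0)\le 5$.

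The essential content of Theorem~3.9 is therefore Theorem~3.8, whose proof rests on the ODE correspondence $M\mapsto\mathcal E(M)$ of Theorem~3.3, the linear-fractional representation of the associated mapping $\mathcal F$ in Theorem~3.4, and the existence result Theorem~3.5 for singular ODEs. The main obstacle is not the reduction above---essentially bookkeeping once those pieces are in place---but Theorem~3.8 itself, which requires delicate analysis of the infinitesimal symmetries of an ODE with an isolated meromorphic singularity.
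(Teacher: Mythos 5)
Your reduction follows essentially the same route as the paper (its Corollaries 8.1 and 8.2): dispose of the minimal cases by classical results, and funnel the nonminimal case into Theorem~3.8. However, your case division for nonminimal $M$ has a genuine gap. The dichotomy ``either $M$ is spherical at every Levi nondegenerate point of $M\setminus X$ near $0$, or there is a non-spherical Levi nondegenerate point arbitrarily close to $0$'' does not cover the possibility that all Levi nondegenerate points are spherical while $M\setminus X$ still contains Levi \emph{degenerate} points accumulating at $0$: density of the Levi nondegenerate locus in $M\setminus X$ does not mean it is all of $M\setminus X$. In that situation the hypothesis of Theorem~3.1 --- that $M\setminus X$ be Levi nondegenerate --- fails, so you cannot conclude $M\in\mathcal P_0$, and Theorem~3.8 is not applicable as you invoke it.

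The repair is exactly the one the paper carries out in the proof of Corollary~8.1. Let $\Sigma\subset M$ be the Levi degenerate set. If $\Sigma\neq X$ near the origin then, since $X$ is the only complex hypersurface contained in $M$ in a sufficiently small neighbourhood of $0$, the points of $\Sigma\setminus X$ are Levi degenerate points of \emph{finite type} arbitrarily close to $0$; Kolar's bound gives $\mbox{dim}\,\mathfrak{hol}(M,p)\leq 4$ at such points, and your own restriction-to-a-nearby-point argument then yields $\mbox{dim}\,\mathfrak{hol}(M,0)\leq 4$. Only after this may one assume $M\setminus X$ Levi nondegenerate and run your two remaining sub-cases. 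With this extra case inserted your argument is complete and coincides with the paper's. Two minor remarks: in the Levi nondegenerate non-spherical case the paper actually obtains the sharper bound $3$ from Beloshapka's result combined with Cartan's classification of homogeneous hypersurfaces (your weaker bound $5$ suffices); and the paper's Corollary~8.1 additionally records a monodromy trichotomy via Theorem~3.7, which is not needed for Theorem~3.9 once Theorem~3.8 is available in full, so your bypassing it is legitimate.
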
 \smallskip

In fact, we can formulate even a stronger statement. A nonminimal
at the origin smooth real-analytic hypersurface $M\subset\CC{2}$
is called \it a sphere blow-up, \rm if for some open neighbourhood
$U$ of the origin there exists a holomorphic mapping $\mathcal
F:\,U\longrightarrow\CC{2}$ such that $\mathcal F(M)\subset S^3$,
$\mathcal F$ is locally biholomorphic in the complement
$U\setminus X$ of the complex locus $X\subset M$ and $\mathcal
F(X)=\{p'\}$ for some point $p'\in S^3$. Observe that \it not \rm
every nonminimal and spherical in $M\setminus X$ hypersurface is a
sphere blow-up, as the associated mapping $\mathcal F$ in this
case might not extend holomorphically to the complex locus $X$. We
then obtain the following characterization of all real-analytic
hypersurfaces with high-dimensional automorphism algebra.

\begin{thm}
Let $M\subset\CC{2}$ be a smooth real-analytic hypersurface,
passing through the origin. Then one of the following mutually
exclusive conditions hold.

\medskip

\begin{enumerate}

\item[(1)] $\mbox{dim}\,\mathfrak{hol}\,(M,0)=\infty$ and
$(M,0)$ is equivalent to the germ of the real hyperplane $\{\im
w=0\}\subset\CC{2}$.\smallskip

\item[(2)] $\mbox{dim}\,\mathfrak{hol}\,(M,0)=8$, and
$(M,0)$ is equivalent to the germ of the 3-sphere
$S^3\subset\CC{2}$.\smallskip

\item[(3)] $\mbox{dim}\,\mathfrak{hol}\,(M,0)=5$, and
$(M,0)$ is a nonminimal at the origin sphere blow-up. Moreover,
the Lie algebra $\mathfrak{hol}\,(M,0)$ is isomorphic to the
stability algebra $\mathfrak{aut}(S^3)$ of the 3-sphere
$S^3\subset\CC{2}$.
\smallskip

\item[(4)] $\mbox{dim}\,\mathfrak{hol}\,(M,0)\leq 4$.

\end{enumerate}
\end{thm}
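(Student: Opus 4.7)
The plan is to split by Levi-flatness, sphericity at $0$ and minimality, and combine Corollary~1 with Theorems~3.7, 3.8 and 3.4 to identify the structural data in each case. If $M$ is Levi flat at $0$, the real-analytic Frobenius theorem gives $(M,0)\sim(\{\im w=0\},0)$; the infinite family of fields $f(z,w)\dz$ with $f$ holomorphic and $f(z,u)$ real on $\{\im w=0\}$ forces $\dim\mathfrak{hol}(M,0)=\infty$, which is case~(1). Otherwise $M$ is Levi nonflat, and Corollary~1 yields $\dim\mathfrak{hol}(M,0)\in\{8,5,4,3,2,1,0\}$, the value $8$ being attained only when $(M,0)\sim(S^3,o')$, which is case~(2). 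Only the structural assertion in case~(3) then requires work; case~(4) is automatic.

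Assume $\dim\mathfrak{hol}(M,0)=5$. I first eliminate minimality at $0$: the classical Chern--Moser gap for Levi nondegenerate non-sphere hypersurfaces in $\CC{2}$ and Kolar's normal form for finite type $\geq 3$ both give $\dim\mathfrak{hol}(M,0)<5$ in those subcases. Hence $M$ is nonminimal at $0$, with complex locus $X\ni 0$. At a generic $q\in M\setminus X$ close to $0$, $M$ is Levi nondegenerate and every element of $\mathfrak{hol}(M,0)$ restricts to an element of $\mathfrak{hol}(M,q)$, so $\dim\mathfrak{hol}(M,q)\geq 5$; the same Chern--Moser gap then forces $M$ to be spherical at $q$, and hence, by real-analyticity, on all of $M\setminus X$. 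Passing to prenormal coordinates via Theorem~3.1, I have $M\in\mathcal P_0$.

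By Theorem~3.7, $\mathfrak{hol}(M,0)$ embeds into $z(\sigma)\cap\mathfrak{hol}(\mathcal Q)$, where $\sigma\in\mathrm{PGL}(3,\CC{})$ is the monodromy of $M$. A Jordan-form case analysis for $\sigma$ acting on $\CC{3}$ shows that $z(\sigma)\cap\mathfrak{su}(2,1)$ has dimension at most $4$ whenever $\sigma$ is nontrivial, so $\sigma=\mathrm{id}$ and the associated mapping $\mathcal F\colon U\setminus X\to\CP{2}$ is single-valued. Theorem~3.4(i) together with the singular-ODE existence Theorem~3.5, applied to the associated ODE $\mathcal E(M)$, then produces holomorphic extensions of each Segre graph $Q_p$ across $X$, and Theorem~3.4(iv) promotes this to a holomorphic extension of $\mathcal F$ across $X$. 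The restriction $\mathcal F|_X\colon X\to S^3$ is a holomorphic map from a complex curve into the strictly pseudoconvex $S^3$, which contains no nonconstant holomorphic discs; hence $\mathcal F(X)=\{p'\}$ and $(M,0)$ is a nonminimal sphere blow-up. Pulling $\mathfrak{aut}(S^3,p')$ back by $\mathcal F$ produces a $5$-dimensional subalgebra of $\mathfrak{hol}(M,0)$, which equals $\mathfrak{hol}(M,0)$ by the dimension count, yielding the isomorphism $\mathfrak{hol}(M,0)\simeq\mathfrak{aut}(S^3,p')$ in~(3).

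The main obstacle is to extract an actual holomorphic extension of $\mathcal F$ across $X$ from triviality of the monodromy alone, since Theorem~3.6 a priori requires the Fuchsian type condition. I expect to handle this by exploiting the richness of the symmetry algebra: a $5$-dimensional point symmetry algebra of $\mathcal E(M)$ severely restricts the coefficients $A,B,\ldots,F$ in~\eqref{d2z} and should force the singularity at $w=0$ to be Fuchsian, after which Theorem~3.6 completes the extension.
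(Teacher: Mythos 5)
Your reduction to the nonminimal spherical case and your use of Theorem~3.7 to kill nontrivial monodromy both match the paper (Corollary~8.1), and the blow-down/isomorphism part of case~(3) is essentially Proposition~8.3. The genuine gap is exactly the step you flag yourself: you cannot get a holomorphic extension of $\mathcal F$ across $X$ from triviality of the monodromy via Theorems~3.4 and~3.5. Theorem~3.5 requires first passing to the reduced equation $\mathcal E^r(M)$, and that reduction (Definition~6.3, Proposition~6.2) exists only under the Fuchsian type hypothesis; moreover even one holomorphic solution of $\mathcal E^r(M)$ yields extension of $\mathcal F$ only through Proposition~6.10, which again assumes Fuchsian type. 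The hypersurfaces $M^m_0$ of Example~6.7 show the implication you want is false without some extra input: their associated mapping $\Lambda_m$ is single-valued yet does not extend to $X$, and their associated ODE is non-Fuchsian. Your proposed repair --- that a $5$-dimensional symmetry algebra forces the Fuchsian condition on $\mathcal E(M)$, after which Theorem~3.6 applies --- is left entirely unsubstantiated, and it is not how the paper closes the loop.

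The paper instead proves the contrapositive directly (Section~9, Corollary~9.4): if $\mathcal F$ is single-valued but does \emph{not} extend to $X$, then $\mbox{dim}\,\mathfrak{hol}(M,0)\leq 4$. The mechanism is Lie-algebraic rather than ODE-theoretic. Proposition~9.2 classifies all subalgebras of $\mathfrak{sl}(3,\CC{})$ of dimension $\geq 5$, and Proposition~9.3 shows each of them contains, after conjugation, the translation algebra $\mbox{span}_{\CC{}}\{\partial/\partial z^*,\partial/\partial w^*\}$ of some affine chart. Proposition~9.1 then pulls these two translations back by the linear-fractional representation \eqref{fgaalpha} of $\mathcal F$ and shows that requiring the pulled-back vector fields to be holomorphic across $X$ forces the coefficient functions $\alpha,\beta,a,b,\delta$ to lie in $\mathcal M(0)$, whence $\mathcal F$ extends to $X$ by Theorem~3.4(iv) --- contradiction. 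So the $5$-dimensionality is consumed by the subalgebra classification, not by any Fuchsianity of $\mathcal E(M)$. Without an argument of this kind (or a proof of your claimed ``$5$ symmetries $\Rightarrow$ Fuchsian'' implication), case~(3) of your proof is incomplete. A secondary, smaller issue: in case~(3) you assert that pulling $\mathfrak{aut}(S^3,p')$ back by the blow-down $\mathcal F$ produces holomorphic vector fields on $M$ near $0$; this needs justification since $\mathcal F$ is not biholomorphic along $X$, whereas the paper obtains the isomorphism from surjectivity of the injective embedding $\mathfrak{hol}(M,0)\hookrightarrow\mathfrak{aut}(S^3,o')$ once the dimensions agree.
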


Finally, we deduce the following description of the infinitesimal
automorphism algebras of real-analytic hypersurfaces
$M\subset\CC{2}$.

\begin{thm} Let $M\subset\CC{2}$ be a real-analytic
hypersurface, $0\in M$, and $M$ be Levi nonflat. Then
$\mathfrak{hol}\,(M,0)$ is isomorphic to a subalgebra in
$\mathfrak{hol}\,(S^3)\simeq\mathfrak{su}(2,1)$, and
$\mbox{dim}\,\mathfrak{hol}\,(M,0)\leq 5$ unless $(M,0)$ is
biholomorphic to $(S^3,o)$ for $o\in S^3$.\end{thm}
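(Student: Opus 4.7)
The quantitative half of the theorem --- the bound $\dim\mathfrak{hol}\,(M,0)\le 5$ whenever $(M,0)\not\sim(S^3,o)$ --- is exactly the content of Theorem 3.9 (the Strong Dimension Conjecture). Since $\dim\mathfrak{hol}\,(S^3)=8$ and $(S^3,o)$ is itself Levi nonflat, the remaining substantive assertion of Theorem 3.11 is the injective embedding
$$\mathfrak{hol}\,(M,0)\hookrightarrow \mathfrak{hol}\,(S^3)\simeq\mathfrak{su}(2,1).$$

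My plan for producing this embedding is to reduce to the classical Chern-Moser setting by passing to a nearby Levi nondegenerate point. Since $M$ is real-analytic and Levi nonflat, the Levi form vanishes on a proper real-analytic subset of $M$, so the Levi nondegenerate locus $M^{\mathrm{nd}}$ is open and dense in $M$. First I would note that $\mathfrak{hol}\,(M,0)$ is finite dimensional (this is a consequence of Theorem 3.9, and also follows from standard Bochner-Montgomery-type results in the Levi nonflat setting). Hence I can choose a single connected open neighborhood $U\ni 0$ on which every germ $X\in\mathfrak{hol}\,(M,0)$ is represented by an honest holomorphic vector field tangent to $M\cap U$, and I may shrink $U$ slightly so that $U$ still meets $M^{\mathrm{nd}}$ in some point $p$. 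Restriction of germs then defines a Lie algebra homomorphism
$$r_p:\,\mathfrak{hol}\,(M,0)\longrightarrow\mathfrak{hol}\,(M,p),$$
and $r_p$ is injective: any two holomorphic vector fields on the connected set $U$ whose germs coincide at $p$ must agree identically by analytic continuation.

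Next I would invoke the classical Chern-Moser theory at the Levi nondegenerate point $p$: every infinitesimal CR-automorphism of $M$ at $p$ is uniquely determined by its $2$-jet, and the space of admissible $2$-jets is a Lie subalgebra of the $2$-jets of infinitesimal automorphisms of the tangent Levi nondegenerate hyperquadric $\mathcal Q\subset\CC{2}$ at $p$. Since $\mathfrak{hol}\,(\mathcal Q)\simeq\mathfrak{su}(2,1)$, this produces an injective Lie algebra homomorphism $\mathfrak{hol}\,(M,p)\hookrightarrow\mathfrak{su}(2,1)$; composing with $r_p$ yields the desired embedding. As an independent check in the nonminimal regime, Theorem 3.7 (applied with $n=2$) already places $\mathfrak{hol}\,(M,0)$ inside the centralizer $z(\sigma)\cap\mathfrak{hol}\,(\mathcal Q)\subset\mathfrak{su}(2,1)$, so the two constructions agree in the overlapping case.

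The main obstacle is the opening bookkeeping: one must justify that a single neighborhood $U$ hosts simultaneous representatives of all of $\mathfrak{hol}\,(M,0)$ and still intersects $M^{\mathrm{nd}}$. Finite dimensionality of $\mathfrak{hol}\,(M,0)$ in the Levi nonflat case is what makes this legitimate; without it, the density argument would fail. Once the two ingredients --- injectivity of $r_p$ and the Chern-Moser embedding at $p$ --- are in place, the embedding half of Theorem 3.11 is immediate, and combining with Theorem 3.9 closes the dimension gap, since dimensions $6$ and $7$ are excluded by Theorem 3.9 and dimension $8$ forces $(M,0)\sim(S^3,o)$.
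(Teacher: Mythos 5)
Your reduction to a nearby Levi nondegenerate point $p$ via the restriction homomorphism $r_p$ is sound (finite dimensionality of $\mathfrak{hol}\,(M,0)$, a common domain of definition $U$, density of the Levi nondegenerate locus, injectivity by analytic continuation). The gap is in the step that is supposed to finish the argument: the claim that Chern--Moser theory produces an injective \emph{Lie algebra} homomorphism $\mathfrak{hol}\,(M,p)\hookrightarrow\mathfrak{su}(2,1)$ at an arbitrary Levi nondegenerate point. Chern--Moser gives that the $2$-jet evaluation $X\mapsto j^2_pX$ is an injective \emph{linear} map, but it is not a Lie algebra homomorphism on all of $\mathfrak{hol}\,(M,p)$: writing $X=\sum_k X_k$, $Y=\sum_k Y_k$ in homogeneous components at $p$, the degree-$2$ part of $[X,Y]$ contains the terms $[X_0,Y_3]+[X_3,Y_0]$, so the $2$-jet of the bracket is not determined by the $2$-jets once the fields are allowed to be nonzero at $p$. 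Moreover the image of $j^2_p$ need not lie in, nor be closed under the bracket of, $j^2_p\bigl(\mathfrak{hol}(\mathcal Q)\bigr)$ for the osculating hyperquadric $\mathcal Q$. This works only when $(M,p)$ is actually spherical, in which case one pushes the whole algebra forward by the equivalence with $S^3$ and no jet truncation is needed.

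The paper closes exactly this case differently, by splitting on the dimension: if $4\leq\mbox{dim}\,\mathfrak{hol}\,(M,0)<\infty$, then (by the restriction argument plus Beloshapka's bound and Cartan's classification, as in Corollary 8.1) $M$ is spherical at a generic point and the embedding is the genuine pushforward just described; if $\mbox{dim}\,\mathfrak{hol}\,(M,0)\leq 3$, the embedding is obtained from the purely algebraic fact that every Lie algebra of dimension at most $3$ is isomorphic to a subalgebra of $\mathfrak{su}(2,1)$ (cited to Onishchik--Vinberg). The quantitative half is then quoted from Theorem 3.10 rather than Theorem 3.9, but that difference is cosmetic. To repair your proof you would need either to restrict attention to spherical points (available only when the dimension is at least $4$) or to import the abstract low-dimensional embedding; the $2$-jet argument alone cannot supply the Lie algebra structure.
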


In the end  we would like to formulate the following conjecture.
It is possible to show that the Levi regularity condition,
guaranteeing the existing of prenormal coordinates
\eqref{prenormal}, holds on an open dense subset of the complex
locus $X$ of a nonminimal Levi nonflat hypersurface. Thus one can
use \eqref{fuchstype} to introduce the notion of Fuchsian type at
a generic point $p\in X$. Following carefully the arguments in
\cite{divergence} and in the present paper, one can see that the
sphericity of $M$ at a generic point does not seem to be necessary
for the effect of splitting nonminimal hypersurfaces into the
Fuchsian and non-Fuchsian classes (we refer again to the
regularity results \cite{ebenfelt},~\cite{jl2} in the
$1$-nonminimal case). Thus we conjecture the following.\smallskip

\noindent \bf Conjecture 3.12. \rm (i) The Fuchsian type is a
sufficient condition for convergence of formal equivalences
between nonminimal hypersurfaces. (ii) The Fuchsian type condition
is sufficient for  analyticity of CR-mappings between nonminimal
hypersurfaces. (iii) The Fuchsian type condition is sufficient for
the moderate growth, as $p\longrightarrow X$, of a mapping
$\mathcal F:\,(M,p)\lr (K,p')$ from $M$ into a compact algebraic
strictly pseudoconvex hypersurface $K$.\smallskip

The remaining of the paper is organized as follows. In Section 4
we prove the prenormalization Theorem 3.1. Its proof is based on
the globalization result \cite{nonminimal} and the properties of
the so-called complex Levi determinant. In Section 5 we use the
associated mapping $\mathcal F$ to obtain a holomorphic ODE  with
an isolated singularity at $w=0$, associated with $M\in\mathcal
P_0$. We then use the existence of prenormal coordinates to obtain
an associated ODE, arising from the defining function of the
hypersurface. Comparing the two ODEs, we prove the meromorphic
character of the associated ODE and obtain estimates for the
orders of poles. We then prove in the same section Theorems 3.3
and 3.4. The crucial step is to show that the associated mapping
$\mathcal F$ is linear-fractional in prenormal coordinates. The
latter fact is proved by means of solving explicitly certain
"Monge-Amp\`ere-like" equations $I_0(z,w)=I_1(z,w)=0$ (see
Section~5 for the notations).  The linear-fractional form of
$\mathcal F$ first allows us to specify the form of the associated
ODE (Theorem 3.3) and second obtain the globalization of Segre
varieties and characterize the analytic continuation in terms of
the behaviour of the extended Segre varieties (Theorem 3.4). As
the (globalized) Segre varieties are solutions of the associated
ODE $\mathcal E(M)$, we reformulate in Section~6 the analytic
continuation problem in terms of the growth of solutions for
$\mathcal E(M)$ as $w\longrightarrow 0$. We then reformulate the
Fuchsian type condition, described in the introduction, in terms
of $\mathcal E(M)$ and show that, under the Fuchsian type
assumption, the ODE $\mathcal E(M)$ can be reduced by a polynomial
substitution to a "Fuchs-like" ODE $\mathcal E^r(M)$. The latter
ODE is a particular case of a Briot-Bouquet type ODE. Section~6.2
is dedicated to various examples of hypersurfaces of class
$\mathcal P_0$ and the connections between the associated mapping,
the associated ODE and the analytic continuation problem. At the
end of the section we perform a crucial step in the proof of
Theorem 3.6, namely, we prove that solutions of the ODE $\mathcal
E(M)$ have a moderate growth, provided the reduced ODE $\mathcal
E^r(M)$ has at least one holomorphic solution, thus reducing
Theorem 3.6 (and Theorem 3 from Introduction) to Theorem~3.5. In
Section 7 we prove Theorem~3.5. For that one needs to prove the
existence of a formal solution, which involves a simple
nonresonant case and a more complicated resonant case. There we
significantly use the single-valuedness of the solutions and apply
the Poincar\'e perturbation method to show that the nonexistence
of a formal solution in the resonant case leads to
multiple-valuedness of some other (existing) solution, which gives
a contradiction. In Section~8 we discuss the connection between
the monodromy of the associated mapping $\mathcal F$ and the
infinitesimal automorphism algebra $\mathfrak{hol}(M,0)$. This
gives the  proof of Theorem~3.7 and the bound
$\mbox{dim}\,\mathfrak{hol}(M,0)\leq 4$ in the case of nontrivial
monodromy. We also prove the bound
$\mbox{dim}\,\mathfrak{hol}(M,0)\leq 5$ in the case when the
associated mapping $\mathcal F$ extends to the complex locus, and
thus reduce the proof of Theorem~3.8 to the case when $\mathcal F$
has a trivial monodromy, but does not extend to $X$. The remaining
case is treated in Section~9, essentially, by proving the fact
that the symmetry algebra of the associated ODE $\mathcal E(M)$
(at a \it singular \rm point) has dimension at most $4$. This
proves Theorem 3.8 and implies Theorems 3.9, 3.10 and 3.11.

\section{A prenormal form for a pseudospherical nonminimal hypersurface}\label{s.normal}

In this section we prove the prenormalization result stated in
Theorem 3.1. It is analogous to the preliminary normalization of
Chern-Moser in \cite{chern}. Throughout this section we denote the
coordinates in $\CC{n}$ by $(z,w) \in \cx^{n-1} \times \CC{}$,
$w=u+iv$, and for a polydisc $U$ centred at the origin we denote
by $U^z$ and $U^w$ its projections onto the $z$- and the
$w$-coordinate spaces respectively.  Further, we assume that
$M\subset\CC{n}$ is a nonminimal real-analytic hypersurface at
$0\in M$, $X\subset M$ is the complex hypersurface through $0$,
$M\setminus X$ is Levi-nondegenerate and the coordinates are
chosen as in Section 2.2.

As Example~\ref{ex.2.2} shows, Theorem 3.1 fails to hold in
general for nonminimal hypersurfaces, even if $M\setminus X$ is
Levi nondegenerate. The proof in the spherical case is based on
the study of the geometry of Segre varieties of $M$ near the
complex locus $X$ and uses in essential way the result
of~\cite{nonminimal}. The proof of the theorem is divided into
several propositions.

\begin{dfn}\label{d.3.2}
Let $M\subset\CC{n}$ be a real-analytic hypersurface as above,
 given in a neighbourhood $U\ni 0$ by a
complex defining equation $w=\rho(z,\bar z,\bar w)$. Consider
$\rho(z,\bar a,\bar b)$ as a function defined in a neighbourhood
of the origin in $\CC{2n-1}$. Then the \it Levi determinant \rm of
$M$ in $U$ is the real-analytic functional $n\times n$ determinant
$$\Delta(z,\bar a,\bar b)=\begin{vmatrix} \rho_{\bar a_1} &...
& \rho_{\bar a_{n-1}}& \rho_{\bar b}\\
\rho_{z_1\bar a_1} &...
& \rho_{z_1\bar a_{n-1}}& \rho_{z_1\bar b}\\
\hdotsfor{4}\\
\rho_{z_{n-1}\bar a_1} &... & \rho_{z_{n-1}\bar a_{n-1}}&
\rho_{z_{n-1}\bar b}\end{vmatrix}.$$  For points $(z,w) \in M\cap
U$, the number $\Delta(z,\bar z,\bar w)$ coincides with the
determinant of the Levi form of $M$, so $\Delta(z,\bar z,\bar
w)=0$ for $(z,w)\in X$, and $\Delta(z,\bar z,\bar w)\neq 0$ for
$(z,w)\in M\setminus X$, if $M\setminus X$ is Levi nondegenerate.
\end{dfn}

The Levi determinant becomes useful in determination of Levi
regularity. We first prove

\begin{propos}[Transversality Lemma]\label{p.3.3}
Suppose that $M\subset \cx^n$ is Segre-regular in a neighbourhood
$U\ni 0$ and $M$ is pseudospherical. Then if two distinct Segre
varieties $Q_p,Q_q,\, p,q\in U\setminus X$ intersect at a point
$s\in U\setminus X$, then their intersection is transversal.
\end{propos}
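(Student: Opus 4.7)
The plan is to transfer transversality from the hyperquadric $\Q\subset\CP{n}$ --- where it is automatic, since Segre varieties of $\Q$ are projective hyperplanes and any two distinct projective hyperplanes through a common point meet transversally there --- to $M$, via the associated multi-valued locally biholomorphic mapping $\mathcal{F}:U\setminus X\lr\CP{n}$ of Section~2.6.

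Let $p,q\in U\setminus X$ be distinct with $Q_p\neq Q_q$, and fix $s\in Q_p\cap Q_q$. Pick a branch $\mathcal{F}_s$ of $\mathcal{F}$ at $s$: a biholomorphism $V\to V'$ with $\mathcal{F}_s(M\cap V)\subset\Q$, where $V\subset U\setminus X$ is a neighbourhood of $s$ and $V'\ni s':=\mathcal{F}_s(s)$. Complex-polarizing the standard real identity relating the defining functions of $M$ and $\Q$ under $\mathcal{F}_s$ yields, for $Z,Z'\in V$,
\[
\phi_M(Z,\overline{Z'})=g(Z,\overline{Z'})\,\phi_\Q\bigl(\mathcal{F}_s(Z),\overline{\mathcal{F}_s(Z')}\bigr),
\]
with $g$ holomorphic in $Z$, antiholomorphic in $Z'$, and non-vanishing. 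For fixed $Z\in V$, both sides are antiholomorphic in $Z'$; by Section~2.6 the continuation of $\mathcal{F}_s(Z')$ is available along any path in $U\setminus X$, so uniqueness of antiholomorphic continuation extends the identity to $Z'=p$:
\[
\phi_M(Z,\overline{p})=\tilde g(Z)\,\phi_\Q\bigl(\mathcal{F}_s(Z),\overline{\tilde W'_p}\bigr),\qquad Z\in V,
\]
where $\tilde W'_p\in\CP{n}$ is the continuation of $\mathcal{F}_s(Z')$ at $Z'=p$ and $\tilde g$ is holomorphic non-vanishing on $V$ (both sides vanish on the same set $Q_p\cap V$, so the ratio extends removably). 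Consequently $\mathcal{F}_s(Q_p\cap V)=Q^\Q_{\tilde W'_p}\cap V'$ is an open piece of the projective hyperplane $\tilde H_p:=Q^\Q_{\tilde W'_p}$, which contains $s'$ because $s\in Q_p$. Analogously, $\mathcal{F}_s(Q_q\cap V)$ is an open piece of a projective hyperplane $\tilde H_q\ni s'$.

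Since $\mathcal{F}_s$ is biholomorphic on $V$, transversality of $Q_p,Q_q$ at $s$ is equivalent to transversality of $\tilde H_p,\tilde H_q$ at $s'$, which holds unless $\tilde H_p=\tilde H_q$. In the remaining case $\mathcal{F}_s(Q_p\cap V)$ and $\mathcal{F}_s(Q_q\cap V)$, being open pieces of the same hyperplane through $s'$, coincide, so $Q_p\cap V=Q_q\cap V$; after an initial shrinking of $U$ arranged so that each $Q_\zeta$ for $\zeta\in U\setminus X$ is irreducible in $U$, the identity theorem gives $Q_p=Q_q$, contradicting the hypothesis. Hence $\tilde H_p\neq\tilde H_q$ and transversality follows.

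The main obstacle is the antiholomorphic analytic continuation of the defining-function identity in $Z'$ from $V$ to the possibly distant points $p$ and $q$; I will justify this by combining the global extendability of $\mathcal{F}$ in $U\setminus X$ supplied by Section~2.6 with uniqueness of analytic continuation of both antiholomorphic-in-$Z'$ sides of the identity. The initial shrinking of $U$ needed for irreducibility of interior-point Segre varieties is standard for Levi nondegenerate-in-the-complement hypersurfaces.
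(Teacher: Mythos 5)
Your overall architecture is exactly the paper's: fix a branch $\mathcal F_s$ of the associated mapping at the intersection point $s$, show that it carries the germs at $s$ of $Q_p$ and $Q_q$ to germs of two projective hyperplanes, observe that these hyperplanes are distinct (else $Q_p=Q_q$ by irreducibility of the Segre graphs), and pull back the automatic transversality of distinct hyperplanes through the biholomorphism $\mathcal F_s$. The paper's proof is one paragraph long precisely because it obtains the key input --- that a branch of $\mathcal F$ at $s$ maps the germ at $s$ of the Segre variety of a possibly \emph{distant} point $p$ into a projective hyperplane --- as a citation to \cite{nonminimal}; this is the ``$\mathfrak{Q}$-Segre property'' (Prop.~4.1 there) that the present paper also invokes in Section~5. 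Everything downstream of that input in your write-up matches the paper and is fine.

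The gap is in your self-contained derivation of that input. The local polarized identity $\phi_M(Z,\overline{Z'})=g(Z,\overline{Z'})\,\phi_{\mathcal Q}(\mathcal F_s(Z),\overline{\mathcal F_s(Z')})$ only holds for $Z'$ in the small neighbourhood $V$ of $s$ where $\mathcal F_s$ is a single branch and where $g$ is known to be holomorphic and non-vanishing. To evaluate it at $Z'=p$ you must continue \emph{all three} factors along a path from $s$ to $p$ in $U\setminus X$: the left side is single-valued on $U$, the quadric factor continues because $\mathcal F$ does, but $g$ is only defined as their quotient, so a priori it continues merely as a meromorphic function, and ``uniqueness of antiholomorphic continuation'' does not prevent it from acquiring zeros or poles along the way. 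The conclusion you need --- $\phi_M(Z,\overline{p})=0\Rightarrow\phi_{\mathcal Q}(\mathcal F_s(Z),\overline{\tilde W'_p})=0$ --- requires precisely that the continued cofactor $\tilde g$ be holomorphic and non-vanishing at the endpoint, and your justification of this (``both sides vanish on the same set $Q_p\cap V$, so the ratio extends removably'') assumes the equality of the two zero sets, which is equivalent to the statement being proved. So the argument is circular at its crucial step. The fix is either to genuinely carry out the continuation argument with control of the divisors (which is what \cite{nonminimal} does, propagating the local Segre-invariance step by step along the path rather than continuing a single polarized identity), or simply to cite \cite{nonminimal} as the paper does, after which your distinctness and transversality argument completes the proof.
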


\begin{proof}
Applying \cite{nonminimal} we conclude that there is a germ of a
biholomorphic mapping $\mathcal F_s:\,(\cx^n,s)\longrightarrow
(\CP{n}, \mathcal F_s(s))$ such that $\mathcal F_s$ sends germs of
$Q_p$ and $Q_q$ at $s$ to two germs of projective hyperplanes
$L_1=\mathcal F_s(Q_p)$, $L_2=\mathcal F_s(Q_q)$. Since $\mathcal
F_s$ is biholomorphic, $L_1$ and $L_2$ are distinct, and so their
intersection at $\mathcal F_s(s)$ is transversal. The same holds
for the intersection $Q_p\cap Q_q$ at $s$.
\end{proof}

\begin{propos}\label{p.3.4}
Suppose that $M\subset \cx^n$ is Segre-regular in a polydisc $U\ni
0$ and $M$ is pseudospherical. Then the Levi determinant
$\Delta(z,\bar a,\bar b)$ of $M$ is nonzero in $U^z\times
U^z\times (U^w\setminus\{0\})$.
\end{propos}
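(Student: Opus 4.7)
My plan is to argue by contradiction: a zero of $\Delta$ off the complex locus will produce two distinct Segre varieties of $M$ tangent at a common intersection point, which will contradict the Transversality Lemma (Proposition \ref{p.3.3}).

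Fix $(z_0,\bar a_0,\bar b_0)\in U^z\times U^z\times(U^w\setminus\{0\})$ and suppose, for a contradiction, that $\Delta(z_0,\bar a_0,\bar b_0)=0$. Set $p_0=(a_0,b_0)\in U\setminus X$, $w_0=\rho(z_0,\bar a_0,\bar b_0)$, and $s_0=(z_0,w_0)$. The key observation I would rely on is that $\Delta(z_0,\cdot,\cdot)$ is precisely the Jacobian determinant of the ``$1$-jet at $z_0$'' map
\[
\Phi(\bar a,\bar b)=\bigl(\rho(z_0,\bar a,\bar b),\,\rho_{z_1}(z_0,\bar a,\bar b),\dots,\rho_{z_{n-1}}(z_0,\bar a,\bar b)\bigr),
\]
viewed as a holomorphic map of the $n$ complex variables $(\bar a_1,\dots,\bar a_{n-1},\bar b)$ into $\CC{n}$.

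I would first verify that $s_0\in U\setminus X$: since $p_0\notin X$, the property $Q_p\cap X\neq\emptyset\Leftrightarrow p\in X$ recalled in Section \ref{s:SV} forces $w_0\neq 0$, and (possibly shrinking $U$ slightly) $s_0\in U$. The core step is then to deduce from the vanishing of the Jacobian determinant of $\Phi$ at $(\bar a_0,\bar b_0)$ that $\Phi$ fails to be injective in every neighbourhood of $(\bar a_0,\bar b_0)$. This is a standard fact for a holomorphic self-map of $\CC{n}$: if $\det d\Phi\equiv 0$ near $(\bar a_0,\bar b_0)$, the rank theorem yields positive-dimensional fibres, while otherwise $\Phi$ is open and finite near $(\bar a_0,\bar b_0)$ with local mapping degree at least two at such a critical point. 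In either case I obtain distinct pairs $(\bar a_1,\bar b_1)\neq(\bar a_2,\bar b_2)$, arbitrarily close to $(\bar a_0,\bar b_0)$, with $\Phi(\bar a_1,\bar b_1)=\Phi(\bar a_2,\bar b_2)$.

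With such pairs chosen close enough, the base points $p_i=(a_i,b_i)$ stay in $U\setminus X$ (as $b_i$ remains close to $b_0\neq 0$), and the common image value $w_1:=\rho(z_0,\bar a_i,\bar b_i)$ stays close to $w_0$, so $s_1:=(z_0,w_1)\in U\setminus X$. By construction $s_1\in Q_{p_1}\cap Q_{p_2}$, and the coincidence of the components $\rho_{z_j}(z_0,\bar a_i,\bar b_i)$ for $i=1,2$ forces the two Segre varieties $Q_{p_i}=\{w=\rho(z,\bar a_i,\bar b_i)\}$ to share the same tangent hyperplane at $s_1$. Segre-regularity of $M$ in $U$ together with $p_1\neq p_2$ gives $Q_{p_1}\neq Q_{p_2}$, so I have produced two distinct Segre varieties of $M$ meeting non-transversally at a point of $U\setminus X$, directly contradicting Proposition \ref{p.3.3}.

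The main obstacle I anticipate is making the implication ``degenerate Jacobian $\Rightarrow$ non-injectivity in every neighbourhood'' fully rigorous; the cleanest route is to invoke the local normal form for finite holomorphic maps between equidimensional complex manifolds, which guarantees local mapping degree $\geq 2$ at any critical point and therefore produces two distinct preimages on every sufficiently small punctured neighbourhood of the critical value. All remaining checks --- keeping the perturbed base points off $X$ and the intersection point off $X$ --- reduce to routine continuity arguments based on $p_0\notin X$ and $Q_{p_0}\cap X=\emptyset$.
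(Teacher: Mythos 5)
Your argument is correct and follows essentially the same route as the paper: you introduce the same $1$-jet map $\mathfrak L_{z_0}$, identify $\Delta$ with its Jacobian, deduce non-injectivity near the critical point, use Segre-regularity to get two \emph{distinct} Segre varieties with matching $1$-jets, and contradict the Transversality Lemma. The only difference is that you spell out the ``degenerate Jacobian $\Rightarrow$ non-injectivity'' step (via local mapping degree) more carefully than the paper does.
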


\begin{proof}
Let $M$ be given by a complex defining equation $w=\rho(z,\bar
z,\bar w)$ and suppose that on the contrary, for some
$(z^*,a^*,b^*)\in U^z\times U^z\times U^w$ with $b^*\neq 0$, the
Levi determinant $\Delta(z^*,\bar a^*,\bar b^*)$ vanishes.
Consider an anti-holomorphic map $\mathfrak L_{z^*} : U^z\times
U^w\longrightarrow \CC{n}$ given by
$$
\mathfrak L_{z^*}(a,b)=\left(\rho(z^*,\bar a,\bar b),
\rho_{z_1}(z^*,\bar a,\bar b),...,\rho_{z_{n-1}}(z^*,\bar a,\bar
b) \right).
$$
The map $\mathfrak L_{z^*}$ assigns to $(a,b)$  the 1-jet of the
Segre variety $Q_{(a,b)}=\{w=\rho(z,\bar a,\bar b)\}$ at the point
$(z^*,\rho(z^*,\bar a,\bar b))$. Also note that $\Delta(z^*,\bar
a^*,\bar b^*)$ is the Jacobian of $\mathfrak L_{z^*}$ at the point
$(a^*,b^*)$. This implies that the map $\mathfrak L_{z^*}$ is
degenerate at $(a^*,b^*)$, and therefore, in any small
neighbourhood of $(a^*,b^*)$ there exist points $p=(a',b')$,
$q=(a'',b'')$ in $U\setminus X$, $p\neq q$, such that $\mathfrak
L_{z^*}(p) = \mathfrak L_{z^*}(q) $, in particular, the 1-jets of
the Segre varieties $Q_p$ and $Q_q$ coincide. On the other hand,
the Segre map of $M$ is locally injective, so for a sufficiently
small neighbourhood of $(a^*,b^*)$ we have $Q_p\neq Q_q$. This
contradicts Proposition~\ref{p.3.3}, which proves the result.
\end{proof}

\begin{propos}\label{p.3.5}
Suppose that for an $m$-nonminimal hypersurface $M\subset\CC{2}$
in a sufficiently small neighbourhood of the origin its Levi
determinant $\Delta(z,\bar a,\bar b)\neq 0$ for $b\neq 0$. Then
$M$ is Levi regular at $0$.
\end{propos}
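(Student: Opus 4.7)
The idea is to expand the Levi determinant $\Delta(z,\bar a,\bar b)$ of $M$ directly in terms of the function $\psi$ from the exponential defining equation $w = \bar w\,e^{i\bar w^{m-1}\psi(z,\bar z,\bar w)}$, then identify its leading $\bar b$-order coefficient with the Hermitian term of $\psi(z,\bar z,0)$; the non-vanishing hypothesis on $\Delta$ will force this Hermitian term to be non-degenerate. Recall that the exact $m$-nonminimality is equivalent to $\psi(z,\bar z,0)\not\equiv 0$, that the normalizations $\varphi(z,0,\bar w)\equiv\varphi(0,\bar z,\bar w)\equiv 0$ imply $\psi(z,0,\bar w)\equiv\psi(0,\bar z,\bar w)\equiv 0$, and that in $\CC{2}$ Levi regularity reduces to the condition that the coefficient $c_{11}$ of $z\bar z$ in $\psi(z,\bar z,0) = \sum_{j,k\ge 1}c_{jk}\,z^j\bar z^k$ is nonzero.

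\medskip
\emph{Leading-order computation.} A direct differentiation of $\rho(z,\bar a,\bar b) = \bar b\,e^{i\bar b^{m-1}\psi(z,\bar a,\bar b)}$ gives
$$\rho_{\bar a} = i\bar b^m\psi_{\bar a}\,e^{i\bar b^{m-1}\psi},\qquad \rho_{z\bar a} = i\bar b^m\bigl(\psi_{z\bar a}+i\bar b^{m-1}\psi_{\bar a}\psi_z\bigr)e^{i\bar b^{m-1}\psi},$$
both divisible by $\bar b^m$, while $\rho_{\bar b}$ and $\rho_{z\bar b}$ have leading $\bar b$-orders $0$ and $m-1$ respectively. Expanding the determinant $\Delta = \rho_{\bar a}\rho_{z\bar b} - \rho_{\bar b}\rho_{z\bar a}$ and collecting powers of $\bar b$ yields
$$\Delta(z,\bar a,\bar b) = \bar b^m\,\widetilde\Delta(z,\bar a,\bar b),\qquad \widetilde\Delta(z,\bar a,0) = -i\,\chi(z,\bar a)\,\psi_{z\bar a}(z,\bar a,0),$$
where $\chi$ is a nowhere-vanishing holomorphic factor ($\chi\equiv 1$ for $m\ge 2$, and $\chi = e^{2i\psi(z,\bar a,0)}$ for $m = 1$). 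In particular $\widetilde\Delta(0,0,0) = -i\,\chi(0,0)\,c_{11}$.

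\medskip
\emph{Case analysis.} The hypothesis that $\Delta\neq 0$ for $\bar b\neq 0$ is equivalent to saying that the zero set of the holomorphic function $\widetilde\Delta$ is contained in the hypersurface $\{\bar b=0\}$. Suppose for contradiction $c_{11}=0$. \textbf{Case 1.} If $\psi_{z\bar a}(z,\bar a,0)\equiv 0$, then $\psi(z,\bar a,0) = f(z)+g(\bar a)$ and the normalizations $\psi(z,0,0) = \psi(0,\bar a,0) = 0$ force $f\equiv g\equiv 0$, so $\psi(z,\bar z,0)\equiv 0$, contradicting the exact $m$-nonminimality. \textbf{Case 2.} Otherwise $\widetilde\Delta(z,\bar a,0)\not\equiv 0$ but $\widetilde\Delta(0,0,0) = 0$. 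Then the zero set of $\widetilde\Delta$ in a neighbourhood of the origin in $\CC{3}$ is a pure-codimension-one analytic subset through the origin, and cannot coincide with the irreducible hypersurface $\{\bar b = 0\}$ (since $\widetilde\Delta|_{\bar b=0}\not\equiv 0$). Hence some component of the zero set meets the open set $\{\bar b\neq 0\}$, again contradicting the hypothesis. We conclude $c_{11}\neq 0$, which is precisely Levi regularity in $\CC{2}$.

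\medskip
\textbf{Main obstacle.} The principal technical step is the leading-order computation: because powers $\bar b^m$ and $\bar b^{2m-1}$ both appear in the expansion of $\Delta$ together with correction terms, some care is needed to verify that the $\bar b^m$-coefficient $-i\chi\,\psi_{z\bar a}(\cdot,\cdot,0)$ is correctly isolated and not cancelled by cross-terms. Once this is in place, the remainder is a standard dichotomy based on the pure codimension one of zero sets of analytic functions.
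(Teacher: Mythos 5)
Your proof is correct and follows essentially the same route as the paper's: both compute $\Delta=\bar b^{\,m}\cdot(\mbox{unit})\cdot\bigl(-i\psi_{z\bar a}(z,\bar a,0)+O(\bar b)\bigr)$ directly from the exponential defining equation, rule out $\psi_{z\bar a}(\cdot,\cdot,0)\equiv 0$ using the no-harmonic-terms normalization together with exact $m$-nonminimality, and then deduce that the coefficient of $z\bar a$ is nonzero at the origin from the hypothesis that the zero set of $\Delta$ is contained in $\{\bar b=0\}$. The only cosmetic difference is that the paper packages the final step as a Weierstrass-preparation factorization $(\bar b)^d\delta$ with $\delta$ a nonvanishing unit, whereas you argue via the pure codimension one of the zero set of $\widetilde\Delta$; these are interchangeable.
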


\begin{proof}
Choose a neighbourhood $U\ni 0$ such that $M$ is given in $U$ by
an exponential defining equation $w=\bar w e^{i\varphi(z,\bar
z,\bar w)}$ and denote $\rho(z,\bar a,\bar b):=\bar b
e^{i\varphi(z,\bar a,\bar b)}$. Then $\rho_{\bar a}=i\bar
b\varphi_{\bar a}e^{i\varphi}$, $\rho_{\bar b}=e^{i\varphi}+i\bar
b \varphi_{\bar b}e^{i\varphi}$, $\rho_{z\bar a}=\bar b
e^{i\varphi}(i\varphi_{z\bar a}-\varphi_{\bar a}\varphi_{z})$,
$\rho_{z\bar b}=(i\varphi_{z}+i\bar b\varphi_{z\bar b}-\bar
b\varphi_{z}\varphi_{\bar b})e^{i\varphi}$, and so
$$
\Delta(z,\bar a,\bar b)=\bar b e^{2i\varphi}\left(-i\varphi_{z\bar
a}+ \bar b\varphi_{z\bar a}\varphi_{\bar b}- \bar b\varphi_{\bar
a}\varphi_{z\bar b}\right ).
$$
Applying the Weierstrass Preparation Theorem and taking possibly a
smaller polydisc $U$, we conclude that there exists an integer
$d\geq 0$ such that
$$
-i\varphi_{z\bar a}+\bar b\varphi_{z\bar a}\varphi_{\bar b}-\bar
b\varphi_{\bar a}\varphi_{z\bar b}= (\bar b)^d\delta(z,\bar a,\bar
b),
$$
where $\delta(z,\bar a,\bar b)$ is holomorphic  in $U^z\times
U^z\times U^w$ and does not vanish there. Since $\varphi (z, \bar
a, \bar b) = \bar b^{m-1}\psi$, $\psi = \psi_0(z, \bar a) + O(\bar
b)$, and $\psi_0\not\equiv 0$ does not contain harmonic terms, we
conclude that the expression $\frac{1}{\bar
b^{\,m-1}}\left(-i\varphi_{z\bar a}+\bar b\,\varphi_{z\bar
a}\,\varphi_{\bar b}-\bar b\,\varphi_{\bar a}\,\varphi_{z\bar
b}\right)|_{\bar b=0}$ is holomorphic in $z,\bar a$ and does not
vanish identically. Hence $d=m-1$, and
$$
\frac{1}{\bar b^{\,m-1}}\left(-i\varphi_{z\bar a}+\bar
b\,\varphi_{z\bar a}\,\varphi_{\bar b}-\bar b\,\varphi_{\bar
a}\,\varphi_{z\bar b}\right)(0,0,0)=\delta(0,0,0)\neq 0.
$$
Now since $\varphi_{\bar a}(z,\bar a,\bar b)=O(|z|), \varphi_{\bar
b}(z,\bar a,\bar b)=O(|z||a|)$, we conclude that $\frac{1}{(\bar
b)^{m-1}}\varphi_{z\bar a}(0,0,0)\neq 0$, which is equivalent to
Levi regularity.
\end{proof}

\noindent Propositions~\ref{p.3.4} and~\ref{p.3.5} imply the
following key

\begin{corol}\label{c.3.6}
Suppose that $M\subset \cx^2$ is an $m$-nonminimal at the origin
real-analytic hypersurface, and $M\setminus X$ is Levi
nondegenerate and spherical. Then $M$ is Levi regular at the
origin, i.e., it can be represented in each of the
forms~\eqref{e.2.2},~\eqref{e.2.3}.
\end{corol}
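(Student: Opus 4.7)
The proof plan is essentially to combine Propositions 3.4 and 3.5, after first arranging that $M$ is Segre regular in a suitable neighbourhood of the origin. Both propositions are already established, so the work consists in verifying that their hypotheses are simultaneously satisfied under the assumptions of the corollary.

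First I would invoke the Segre-regularity result from \cite{nonminimal} recalled in Section 2.6: since $M\setminus X$ is Levi nondegenerate, one can choose a polydisc $U\ni 0$ in which the Segre map $\lambda$ is locally injective on $U\setminus X$, i.e., $M$ is Segre regular in $U$. Next, since $M\setminus X$ is spherical by hypothesis, $M$ is pseudospherical in the sense of Section 2.6. Thus the hypotheses of Proposition~\ref{p.3.4} are met in $U$, and we conclude that the Levi determinant $\Delta(z,\bar a,\bar b)$ of $M$ does not vanish on $U^z\times U^z\times(U^w\setminus\{0\})$.

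With this nonvanishing of the Levi determinant in hand for $\bar b\neq 0$, I would then apply Proposition~\ref{p.3.5}, whose sole hypothesis (beyond $m$-nonminimality, which is given) is precisely that $\Delta(z,\bar a,\bar b)\neq 0$ for $\bar b\neq 0$ in a sufficiently small neighbourhood of the origin. The conclusion is that $M$ is Levi regular at $0$, which by Definition~\ref{d.2.1} means that the function $\varphi$ in the exponential defining equation admits the decomposition~\eqref{e.2.2}, and equivalently (via the computation $\left.\varphi\right|_{M\setminus X}=2u^{m-1}\Phi(z,\bar z,u)+O(u^{3m-3}\Phi^3)$ already recorded in Section 2.2) the function $\Phi$ in the real defining equation admits the decomposition~\eqref{e.2.3}.

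I do not expect any serious obstacle: the corollary is a direct conjunction of the two propositions, the bridge being the hypothesis of sphericity on $M\setminus X$ which delivers both Segre regularity (via \cite{nonminimal}) and pseudosphericity. The only point that requires care is the possibility of having to shrink the polydisc $U$ when passing from Proposition~\ref{p.3.4} to Proposition~\ref{p.3.5}, but this is harmless since the statement of the corollary is a germ statement at the origin.
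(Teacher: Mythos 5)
Your proposal is correct and follows exactly the route the paper takes: the corollary is stated there as an immediate consequence of Propositions~\ref{p.3.4} and~\ref{p.3.5}, with Segre regularity supplied by the result of \cite{nonminimal} recalled in Section~2.6 and pseudosphericity supplied by the sphericity hypothesis. Your additional remarks on shrinking the polydisc and on the equivalence of the two forms of Levi regularity are consistent with what the paper already records in Section~2.2.
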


Now, in the presence of a \it leading Hermitian term \rm in the
defining equation of a nonminimal hypersurface, we can prove
Theorem 3.1 using Chern-Moser-type transformations.

\begin{proof}[Proof of Theorem 3.1 ]
First, using Corollary~\ref{c.3.6}, we may represent $M$ in some
polydisc $U$ by a defining equation $v = u^{m}\,\Phi(z, \bar z,
u)$, where $\Phi(z, \bar z, u)$ is given as in~\eqref{e.2.3}. In
the proof we denote by $O_{22}$ a power series in $z$, $\bar z$,
and $u$ containing only monomials $z^k\bar z^l u^j,\,k,l\geq 2$,
$j\geq 0$. We consider the expansion
$$
\tilde \Phi(z,\bar z,u)=z\lambda(\bar z,u)+\bar
z\bar\lambda(z,u)+O_{22} ,
$$
and $H(z,\bar z,u)=\alpha(u)|z|^2$, where $\alpha(u)\neq 0$ in
$U^w$. Define the function $f(z,w)
=\frac{\bar\lambda(z,w)}{\alpha(w)}$. Note that for $(z,w)\in M$,
we have $\bar w = u -i u^m\,O(|z|^2)$, so
$\left.\frac{\alpha(u)}{\alpha(\bar w)}\right|_M = 1 + O(|z|^2)$.
Therefore,
\begin{eqnarray*}
H(z+f(z,w),\bar z+\bar f(\bar z,\bar w), u)|_M= \\
(H(z,\bar z, u)+z\lambda(\bar z,\bar w)+\bar z\bar\lambda(z,w))|_M+O_{22}=\\
H(z,\bar z,u)+z\lambda(\bar z,u)+\bar z\bar\lambda(z,u)+O_{22}.
\end{eqnarray*}
From this it follows that the transformation
$$
z^*=z+f(z,w),\ w^*=w
$$
maps $M$ onto a hypersurface $M^*$ given by
\begin{equation}\label{e.3.2}
v^*=(u^*)^m\left(H(z^*,\bar z^*,u^*)+\sum\limits_{k,l\geq
2}\varphi^*_{kl}(u^*)(z^*)^k(\bar z^*)^l\right).
\end{equation}
Finally, to make $H$ independent of $u$ for $M$ given
by~\eqref{e.3.2}, we drop the asterisks for the sake of simplicity
and set $H(z,\bar z,u) = \alpha(u)|z|^2$ with $\alpha(u)\neq 0$.
Since $\alpha(u)$ is real-valued, we may assume first that
$\alpha(u)>0$. The transformation
$$
z^*=z\sqrt{\alpha(w)},\ w^*=w,
$$
where the root is chosen to be positive for the positive argument,
maps $M$ onto the hypersurface of the form~\eqref{prenormal}. This
follows from $\left|z\sqrt{\alpha(w)}\right|^2=H(z,\bar
z,u)+O_{22}$ whenever $(z,w) \in M$. The proof for $\alpha(u)<0$
is analogous.
\end{proof}

\section{Ordinary differential equation associated with a nonminimal spherical hypersurface}

In this section we prove Theorem~3.3, which describes a (singular)
second order ODE associated with a real hypersurface $M$. We also
prove Theorem 3.4, which allows us to reduce the study of the
associated mapping $\mathcal F$ to the study of solutions for the
associated ODE. As explained in Section~2, in general, a
nonminimal real hypersurface does not admit a second order ODE
associated with it. However, such ODE always exists in the
spherical case. The proof of this crucially depends on the global
properties of the mapping $\mathcal F$ associated with $M$.

\subsection{Existence of an associated singular ODE}

In what follows we assume that $M$ is a hypersurface of class
$\mathcal P_0$, $U$ is the associated neighbourhood and $\mathcal
F$ is the associated mapping. We start with introducing the {\it
regular set} $U^0= \mathcal F^{-1}(\CC{2})\subset U\setminus X$
and the {\it exceptional set} $E=(U\setminus X)\setminus
U^0=\mathcal F^{-1}(\mathbb{CP}^2\setminus\CC{2})$. The set $E$ is
the pre-image of the projective line
$\mathbb{CP}^2\setminus\CC{2}$, and since each element of
$\mathcal F$ at a point $p\in U\setminus X$ is biholomorphic in a
sufficiently small polydisc, $E$ is a locally countable union of
one-dimensional locally complex-analytic sets in $U\setminus X$.
This implies that $E$ has Hausdorff dimension 2, so that $U^0$ is
an open, connected
 and dense subset in $U\setminus X$, see, e.g, \cite{chirka}. We first study the
 behaviour of $\mathcal F$ on the regular set.

Fix a point $p\in U^0$ and a biholomorphic element $\mathcal F_p$
of $\mathcal F$ at $p$, defined in a sufficiently small polydisc
$U_p\subset U^0$. We claim that in $U_p$ there exists a second
order ODE that is satisfied by all Segre varieties of $M$ that
have nonempty intersection with $U_p$.

To prove the claim we write $\mathcal F_p =(f,g)$, as the
components of $\mathcal F$ are well-defined in $U^0$. For some
point $s\in Q_p$ there exists a polydisc $U_s \subset U \setminus
X$ such that $\cup_{q\in U_s} Q_q$ contains a neighbourhood of
$p$. By shrinking $U_p$, we may assume that this neighbourhood is
$U_p$. The $\mathfrak{Q}$-Segre property of $\mathcal F$ (see
\cite[Prop. 4.1]{nonminimal}) implies  that $\mathcal F_p$ sends
open pieces $Q_q\cap U_p$ of Segre varieties to affine complex
lines $\Pi_q\subset \cx^2$. For a fixed $q\in U_s$, assume that
$\Pi_q$ is given by
\begin{equation}\label{e.Pi}
z^*\lambda+w^*\mu=1
\end{equation}
for some $\lambda, \mu \in \mathbb C$, with $\mu\neq 0$. Setting
$(z^*,w^*)=(f,g)$ we see that the set $Q_q\cap U_p$, considered as
a graph $w=w_q(z),z\in U_p^z$, satisfies the equation:
\begin{equation}\label{e.4.3}
f(z,w_q(z))\lambda+g(z,w_q(z))\mu=1 .
\end{equation}
Differentiation of~\eqref{e.4.3} once w.r.t. $z$ yields
\begin{equation}\label{e.4.4}
f_z(z,w_q(z))\lambda+g_z(z,w_q(z))\mu+f_w(z,w_q(z))w'_q(z)\lambda+g_w(z,w_q(z))w'_q(z)\mu=0
.
\end{equation}
Consider \eqref{e.4.3} and~\eqref{e.4.4} as a system of linear
equations w.r.t. $\lambda$ and $\mu$. This system correctly
defines a map $(z,q) \to (\lambda, \mu)$. Indeed, suppose that for
some $(z^0,q^0)$ there exist more than one solution $(\lambda,
\mu)$ of the system \eqref{e.4.3},~\eqref{e.4.4}. Then
\eqref{e.4.3} implies that for all solutions $(\lambda, \mu)$ the
corresponding complex lines~\eqref{e.Pi} pass through the point
$\mathcal F(z^0, w_{q^0}(z^0))$, while~\eqref{e.4.4} implies that
the line $D\mathcal F (T_{(z^0, w_{q^0}(z^0))}Q_{q^0})$ is tangent
to~\eqref{e.Pi}. But since $D\mathcal F \ne 0$, it follows that
there exists only one such pair $(\lambda, \mu)$.  By solving the
system \eqref{e.4.3},~\eqref{e.4.4} we may express $\lambda$ and
$\mu$ as functions of $(z,q)$. By the invariance of Segre
varieties, these are, in fact, functions of $q$ only.

Differentiating~\eqref{e.4.3} twice yields (we omit the arguments
for simplicity of the formula)
\begin{equation}
w''(\lambda f_w+\mu g_w)+(w')^2(\lambda f_{ww}+\mu
g_{ww})+w'(2\lambda f_{zw}+2\mu g_{zw})+(\lambda f_{zz}+\mu
g_{zz}) =0 .
\end{equation}
Now, substitution of $\lambda$ and $\mu$ with solutions of the
system, gives
$$w''(f_wg_z-f_zg_w)=(f_z+f_ww')(g_{zz}+2g_{zw}w'+g_{ww}(w')^2)-(g_z+g_ww')(f_{zz}+2f_{zw}w'+f_{ww}(w')^2).$$
Since $\mathcal F_p$ is biholomorphic in $U_p$, the Jacobian
$J=f_wg_z-f_zg_w$ is nonzero in $U_p$, and we obtain
\begin{equation}\label{e.4.6}
w''=I_0+I_1w'+I_2(w')^2+I_3(w')^3,
\end{equation}
where
\begin{eqnarray}\label{e.4.7}
& \notag  I_0 =\frac{1}{f_wg_z-g_wf_z}\left(f_zg_{zz}-g_zf_{zz}\right) ,\\
& I_1 =\frac{1}{f_wg_z-g_wf_z}\left(f_wg_{zz}-g_wf_{zz}+2f_zg_{zw}-2g_zf_{zw}\right) ,\\
& \notag  I_2 =\frac{1}{f_wg_z-g_wf_z}\left(f_zg_{ww}-g_zf_{ww}+2f_wg_{zw}-2g_wf_{zw}\right) ,\\
& \notag I_3
=\frac{1}{f_wg_z-g_wf_z}\left(f_wg_{ww}-g_wf_{ww}\right) .
\end{eqnarray}
Furthermore, \eqref{e.4.6} is satisfied by $Q_q \cap U_p$ for all
$q\in U\setminus X$ with $Q_q\cap U_p\neq\emptyset$, not just for
$q\in U_s$. To see this, observe that there exists a pair of
polydiscs $U_1\Subset U_2 \Subset U_p$ with the property that if
$Q_q \cap U_1 \ne \varnothing$, then $Q_q \cap U_2$ is a graph
$w=w_q (z)$ over $U_2^z$. We shrink $U_p$ to $U_1$ and consider
$Q_q$ as graphs in $U_2$. Then the assertion follows from the
analytic dependence of $Q_q$ on $q$, and the fact that the set
$\{q:\,Q_q\cap U_p\neq\emptyset\}$ coincides with $\cup Q_r,r\in
U_p$, and hence is open and connected. This proves our claim.

Since $\mathcal F_p$ extends analytically along any path in $U^0$,
so do the four analytic elements
$I_0(z,w),I_1(z,w),I_2(z,w),I_3(z,w)$. On the other hand,
equation~\eqref{e.4.6} is independent of the choice of the germ of
$\mathcal F$ at~$p$. This can be argued as follows: from the
previous discussion we may conclude that $\{Q_q \cap U_p,\ q\in
U_s\}$ is an anti-holomorphic 2-parameter family of holomorphic
curves in $U_p$. Then this family has the transversality property,
i.e., the map $(z, \alpha, \beta) \to (z, w_{(\alpha, \beta)} (z),
w'_{(\alpha,\beta)}(z))$ is injective, and thus there exists a
unique  second order ODE $w'' = \theta (z, w, w')$ satisfied by
the family $\{Q_q \cap U_p,\ q\in U_s\}$ (see Section 2.3). From
this we conclude that the ODE~\eqref{e.4.6} is unique, i.e., is
independent of the choice of $\mathcal F_p$.

From the uniqueness of~\eqref{e.4.6} we conclude that the four
functions $I_0$, $I_1$, $I_2$, $I_3$ are holomorphic in all of
$U^0$, in particular, single-valued. For the same reason the
replacement of the mapping $\mathcal F$ by a mapping $\sigma\circ
\mathcal F,\,\sigma\in\mathfrak{hol}(\mathbb{CP}^2)$, does not
change the expressions $I_0$, $I_1$, $I_2$, $I_3$ in a
neighbourhood of $p$, provided $\sigma\circ \mathcal F_p$ is still
a mapping to the affine chart $\CC{2}\subset\mathbb{CP}^2$.

Take now a point $p\in E$ and replace $\mathcal F$ by the mapping
$\tilde{\mathcal F}=\sigma\circ \mathcal F$ such that
$\sigma\in\mathfrak{hol}(\mathbb{CP}^2)$ and $\tilde{\mathcal
F_p}=\sigma\circ \mathcal F_p\subset\CC{2}$ maps $U_p$ into
$\CC{2}$. Then the regular set $U^0$ is replaced by an open dense
set $\tilde U^0$ and using the map $\tilde{\mathcal F}$ we obtain
a second order ODE in a neighbourhood of $p$ with the properties
analogous to those of~\eqref{e.4.6}. This shows that
$I_0,I_1,I_2,I_3$ extend holomorphically to $E$.

Finally, since $I_0$, $I_1$, $I_2$, $I_3$ are holomorphic in
$U\setminus X$, we conclude that~\eqref{e.4.6} is satisfied by all
\it entire \rm (i.e., in all of $U\setminus X$) Segre varieties
$Q_q$ for $q\in U\setminus X$. We summarize our arguments in the
following key

\begin{propos}\label{p.4.3}
In the assumptions of Theorem~3.3, there exist four holomorphic in
$U\setminus X$ functions $I_0(z,w)$, $I_1(z,w)$, $I_2(z,w)$,
$I_3(z,w)$ such that the differential equation~\eqref{e.4.6} is
satisfied by the defining function $w_q(z)$ of each of the Segre
varieties $Q_q$, $q\in U\setminus X$, considered as graphs
$w=w_q(z)$. In each neighbourhood $U_p$, $p\in U^0$, and for any
element $\mathcal F_p$ of $\mathcal F$ with $\mathcal
F_p(U_p)\subset\CC{2}$ that has components $(f,g)$ as a map
$U_p\longrightarrow\CC{2}$, the four functions $I_0,I_1,I_2,I_3$
are given by~\eqref{e.4.7}. The expressions in~\eqref{e.4.7} are
invariant under the exterior action of elements $\sigma\in
Aut(\mathbb{CP}^2)$ with $\sigma(\mathcal F_p(U_p))\subset\CC{2}$.
At points $p\in E$ the four expressions $I_0,I_1,I_2,I_3$ can be
computed by formulas \eqref{e.4.7} applied to $\sigma\circ
\mathcal F_p$ with $\sigma(\mathcal F_p(U_p))\subset\CC{2}$.
\end{propos}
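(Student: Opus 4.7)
The plan is to synthesize the local computations carried out just before the statement into a global assertion. I would proceed in three stages: an explicit local construction at regular points, a uniqueness argument showing the coefficients are intrinsic to $M$, and an extension across the exceptional set $E$.

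\emph{Local construction.} Fix $p\in U^0$ and a biholomorphic branch $\mathcal F_p=(f,g)$ of $\mathcal F$ on a small polydisc $U_p$ with $\mathcal F_p(U_p)\subset\CC{2}$. By the $\mathfrak{Q}$-Segre property, each image $\mathcal F_p(Q_q\cap U_p)$ is an affine line of the form \eqref{e.Pi}, with coefficients $(\lambda,\mu)$ uniquely recoverable from the pair \eqref{e.4.3}--\eqref{e.4.4} thanks to $J:=f_wg_z-f_zg_w\neq 0$ on $U_p$. Differentiating \eqref{e.4.3} twice in $z$ and substituting the solved $(\lambda,\mu)$ yields \eqref{e.4.6} with the explicit coefficients \eqref{e.4.7}, a priori dependent on the branch $(f,g)$.

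\emph{Uniqueness and invariance.} By Proposition \ref{p.3.3}, the family $\{Q_q\cap U_p\}$ is an anti-holomorphic $2$-parameter transversal family of graphs, so the ODE theorem recalled in Section 2.3 produces a \emph{unique} second-order scalar ODE $w''=\theta(z,w,w')$ satisfied by the family. Hence the coefficients $I_0,\dots,I_3$ are determined by $M$ alone, independently of the chosen branch $\mathcal F_p$. This at once gives the $\sigma$-invariance: for any $\sigma\in\mathrm{Aut}(\CP{2})$ with $\sigma(\mathcal F_p(U_p))\subset\CC{2}$, the mapping $\sigma\circ\mathcal F_p$ parameterizes the same family of image lines (the hyperplanes are merely re-parameterized in homogeneous coordinates), so it produces the same quadruple. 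Since by Section 2.6 any two germs of $\mathcal F$ at the same regular point differ by such a $\sigma$, analytic continuation of $\mathcal F$ along loops in $U^0$ does not alter $I_0,\dots,I_3$, and these are single-valued holomorphic functions on $U^0$.

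\emph{Extension across $E$.} For $p\in E$, I would choose $\sigma\in\mathrm{Aut}(\CP{2})$ so that $\tilde{\mathcal F}_p:=\sigma\circ\mathcal F_p$ takes values in $\CC{2}$ on a polydisc $\tilde U_p$ about $p$; applying the local construction to $\tilde{\mathcal F}_p$ yields holomorphic coefficients $\tilde I_j$ on $\tilde U_p$. On $\tilde U_p\cap U^0$, the $\sigma$-invariance forces $\tilde I_j=I_j$, so these local representatives glue to a holomorphic extension of $I_j$ across $E$. The resulting ODE \eqref{e.4.6} is then satisfied by every entire Segre variety $Q_q$, $q\in U\setminus X$, since such $Q_q$ meets $U^0$ in a dense open subset on which the ODE is verified by the construction, and both sides are analytic in $z$.

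\emph{Main obstacle.} The one genuinely delicate point is the independence of the ODE on the branch of $\mathcal F$. The formulas \eqref{e.4.7} are strongly tied to the specific $(f,g)$, and a direct verification of $\sigma$-invariance via projective substitutions would be an unpleasant computation. The cleaner route, which I would take, is to bypass the direct check entirely by invoking uniqueness of the ODE attached to a transversal $2$-parameter family of holomorphic graphs: since the family of Segre varieties is intrinsic to $M$, so is the ODE that they satisfy, whatever form its coefficients take in a given trivialization.
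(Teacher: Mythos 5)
Your proposal follows essentially the same route as the paper: the explicit local computation of \eqref{e.4.7} via the $\mathfrak{Q}$-Segre property, the appeal to uniqueness of the second-order ODE attached to a transversal anti-holomorphic $2$-parameter family of graphs to obtain branch-independence, single-valuedness and $\sigma$-invariance, and the composition with a suitable $\sigma\in\mathrm{Aut}(\CP{2})$ to extend the coefficients across the exceptional set $E$. The "cleaner route" you identify at the end --- bypassing a direct projective computation by invoking intrinsic uniqueness of the ODE --- is precisely the argument the authors use.
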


We now determine the behaviour of $I_0,I_1,I_2,I_3$ near the
complex locus $X$, using smoothness of $M$ given in prenormalized
form~\eqref{complexdef}.

\bigskip

\begin{propos}\label{p.4.4}
In the assumptions of Theorem~\ref{t.3.3} one has $I_0=I_1\equiv
0$. Furthermore, the functions $w^mI_2(z,2)$,  $w^{2m}I_3(z,w)$
extend to $X$ holomorphically, i.e., $I_2$ has the pole of order
$\leq m$ w.r.t $w$ at $w=0$ and $I_3$ has the pole of order $\leq
2m$ w.r.t $w$ at $w=0$.
\end{propos}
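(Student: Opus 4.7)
The plan is to exploit the explicit parametrization of Segre varieties provided by the prenormal form, and to compute the coefficients of the associated ODE as Taylor expansions in $w'$ around $w'=0$ while tracking pole orders in $w$. With $p=(a,b)$, the Segre variety $Q_p$ is the graph $w=w_p(z)=\bar b\,e^{i\varphi(z,\bar a,\bar b)}$, where $\varphi=\bar b^{\,m-1}\psi$ and
$$
\psi(z,\bar a,\bar b)=\pm z\bar a+\sum_{k,l\ge 2}\varphi_{kl}(\bar b)\,z^k\bar a^l.
$$
Logarithmic differentiation of $\ln(w_p/\bar b)=i\varphi$ yields $w_p'=iw_p\,\varphi_z(z,\bar a,\bar b)$, and differentiating once more gives
$$
w_p''=\frac{(w_p')^2}{w_p}+iw_p\,\varphi_{zz}(z,\bar a,\bar b).
$$
Since the first term already accounts for a $1/w$ contribution to the $(w')^2$-coefficient of the ODE, everything reduces to analyzing $iw\,\varphi_{zz}$ after expressing $(\bar a,\bar b)$ in terms of $(z,w,w')$.

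To carry out this elimination, I fix a base point $(z_0,w_0)\in U\setminus X$ and consider the map $(\bar a,\bar b)\mapsto(w_p(z_0),w_p'(z_0))$. Using $\psi(z,0,\bar b)\equiv 0$, $\psi_z(z,0,\bar b)\equiv 0$, and $\partial_{\bar a}\psi_z(z,0,\bar b)=\pm 1$, a short computation shows that at $(\bar a,\bar b)=(0,w_0)$ the map sends this point to $(w_0,0)$ and its Jacobian equals $\mp iw_0^m\neq 0$. The implicit function theorem then yields holomorphic $\bar a=\bar a(z,w,w')$ and $\bar b=\bar b(z,w,w')$ near $w'=0$, with $\bar a(z,w,0)=0$ and $\bar b(z,w,0)=w$. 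Since every monomial of $\psi_{zz}$ carries a factor $\bar a^l$ with $l\ge 2$ (the mixed harmonic terms are absent in prenormal form), the Taylor series of $iw\,\varphi_{zz}$ in $w'$ begins at order $(w')^2$. Comparison with $w''=I_0+I_1w'+I_2(w')^2+I_3(w')^3$ therefore gives $I_0\equiv 0$ and $I_1\equiv 0$ on $U\setminus X$.

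For the pole orders, the same Jacobian calculation yields $\partial_{w'}\bar a|_{w'=0}=\mp i/w^m$, so to leading order $\bar a\sim\mp iw'/w^m$ and $\bar b=w+O(w')$. Substituting into
$$
iw\,\varphi_{zz}=iw\,\bar b^{\,m-1}\bigl[(2\varphi_{22}(\bar b)+6\varphi_{32}(\bar b)z)\bar a^2+(2\varphi_{23}(\bar b)+6\varphi_{33}(\bar b)z)\bar a^3+\cdots\bigr],
$$
one reads off that, up to holomorphic corrections, the $(w')^2$-coefficient equals $-i(2\varphi_{22}(w)+6\varphi_{32}(w)z)/w^m$ and the $(w')^3$-coefficient has leading part of order $O(1/w^{2m})$. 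Since $I_2,I_3$ are holomorphic on $U\setminus X$ by Proposition 4.3 and, by the estimate above, are bounded near generic smooth points of $X$ by $O(|w|^{-m})$ and $O(|w|^{-2m})$ respectively, Riemann's extension theorem across the smooth hypersurface $X=\{w=0\}$ implies that $w^mI_2$ and $w^{2m}I_3$ extend holomorphically to $U$.

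The main obstacle is the bookkeeping of higher-order corrections: one must show that the Taylor coefficient $\alpha_k:=\partial_{w'}^k\bar a|_{w'=0}/k!$ of $\bar a$ in $w'$ has pole of order at most $km$ at $w=0$, and similarly control $\bar b-w$. This follows by induction, since the recursion for $\alpha_k$ obtained by inverting $\varphi_z=-iw'/w$ order by order only multiplies by factors of $\bar b^{\,m-1}\sim w^{m-1}$ and divides by the unit leading coefficient $\pm 1$ of $\psi_z$ in $\bar a$, so each additional power of $w'$ contributes at most one additional factor of $1/w^m$ to the pole.
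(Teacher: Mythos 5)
Your proposal is correct and follows essentially the same route as the paper: parametrize $Q_{(a,b)}$ by the exponential defining equation, compute $w'=iw\varphi_z$ and $w''=(w')^2/w+iw\varphi_{zz}$, eliminate $(\bar a,\bar b)$ by the implicit function theorem, and observe that $\varphi_{zz}=O(\bar a^2)$ kills the $(w')^0$ and $(w')^1$ coefficients while $\bar a=O(w'/w^m)$, $\bar b=w+O(w')$ give the pole bounds. The one place where the paper is slicker is precisely the step you flag as the ``main obstacle'': instead of inverting $(\bar a,\bar b)\mapsto(w,w')$ at a fixed $w_0\neq 0$ (where the Jacobian $\mp i w_0^m$ degenerates as $w_0\to 0$) and then proving by induction that the $k$-th Taylor coefficient of $\bar a$ in $w'$ has pole order at most $km$, the paper substitutes $\zeta:=w'/w^m$ and inverts $(\bar a,\bar b)\mapsto(w,\zeta)$ at the \emph{origin} of $(z,w,\zeta)$-space, where the system is nondegenerate; then $\bar a=P(z,w,\zeta)$ and $\bar b=Q(z,w,\zeta)$ are holomorphic in a full polydisc, the right-hand side becomes a convergent series $\sum_{j\ge 0,\,k\ge 2,\,l\ge m}h_{jkl}z^j\zeta^kw^l$, and matching it against $I_0+I_1\zeta w^m+I_2\zeta^2w^{2m}+I_3\zeta^3w^{3m}$ yields all four assertions at once, with no Riemann extension or inductive bookkeeping needed. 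Your inductive sketch is sound and can be completed (indeed the bound $\partial_{w'}^k\bar a|_{w'=0}=O(w^{-km})$ is exactly what the holomorphy of $P$ in $\zeta$ encodes), but if you want a clean write-up, the rescaling of the derivative variable is the efficient way to close it.
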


\begin{proof} We find a relationship between the defining function $\varphi$ as in~\eqref{complexdef} and \eqref{e.4.6}.
Assume first that $M$ is positive. Let $q=(a,b)\in U\setminus X$
so that $b\neq 0$. Then $Q_q$ is given by
\begin{equation}\label{e.4.8}
w=w(z)=\bar b e^{i\varphi(z,\bar a,\bar b)}=\bar b+i\bar
b^{m}z\bar a+O(z^2\bar a^2\bar b^m).
\end{equation}
Differentiation of \eqref{e.4.8} w.r.t. $z$ yields
$$
w'=i\bar b e^{i\varphi(z,\bar a,\bar b)}\varphi_z(z,\bar a,\bar
b)=(i\bar b-z\bar a\bar b^m+O(z^2\bar a^2\bar b^m))(\bar a\bar
b^{m-1}+O(z\bar a^2\bar b^{m-1}))=i\bar a\bar b^m+O(z\bar a^2\bar
b^{m}).
$$
Now set $\zeta:=\frac{w'}{w^m}$; this is well-defined because
$w(z)\neq 0$ for a Segre variety $Q_q$, $q\in U\setminus X$. Then,
combining the last equalities, we obtain
\begin{equation}\label{e.4.9}
\zeta=i\bar a+O(z\bar a^2\bar b^{m}).
\end{equation}
Equation~\eqref{e.4.9} shows that choosing possibly a smaller
initial neighbourhood $U$ of the origin we can apply the implicit
function theorem for the system~\eqref{e.4.8},~\eqref{e.4.9} near
the origin to get
\begin{equation}\label{e.4.10}
\bar a=P(z,w,\zeta)=-i\zeta+O(z\zeta^2w^{m}),\ \bar
b=Q(z,w,\zeta)=w+O(z\zeta w^m).
\end{equation}
Differentiating~\eqref{e.4.8} twice w.r.t. $z$ and plugging
~\eqref{e.4.10} into the result  we conclude that for each point
$(z,w)\in Q_q$ the values $z,w,w',w''$ are related by
\begin{equation}\label{OPQ}
w''=O(P(z,w,\zeta)^2 \cdot Q(z,w,\zeta)^m)=\sum\limits_{j\geq
0,k\geq 2,l\geq m}h_{jkl}z^j \left(\frac{w'}{w^m}\right)^k
w^l:=\Phi(z,w,w') .
\end{equation}
We note that the values $(z,w,w')$ in $\eqref{OPQ}$ belong to some
open domain $\Omega\subset\CC{3}$ (to see the openness we argue as
in the proof of Proposition~\ref{p.4.3} and consider the locally
biholomorphic mapping
$\chi:\,(z,q)\longrightarrow(z,w_q(z),w'_q(z)),\,q\in U\setminus
X,\,z\in U^z$; then simply $\Omega=\chi(U\setminus X)$). The
series $\eqref{OPQ}$ converges in $\Omega$ uniformly on compact
subsets. From $\eqref{complexdef}$ we get $\Phi(z,w,w')\equiv
I_0(z,w)+I_1(z,w)w'+I_2(z,w)(w')^2+I_3(z,w)(w')^3$ (as the
uniqueness implies). On the other hand, considering the
biholomorphic in $\Omega$ mapping $\psi:\,(z,w,w')\longrightarrow
(z,w,\frac{w'}{w^m})=(z,w,\zeta)$ we obtain a domain $\tilde
\Omega =\psi(\Omega)\subset\CC{3}$ and may consider the
holomorphic in $\tilde\Omega$ function
$H(z,w,\zeta):=\Phi(z,w,\zeta w^m)$. Then $H(z,w,\zeta)$ is given
in $\tilde\Omega$ by the power series
\begin{equation}\label{hjkl}
\sum\limits_{j\geq 0,k\geq 2,l\geq m}h_{jkl}z^j \zeta^k w^l.
\end{equation}
This implies that there exists a polydisc $V\subset\CC{3}$,
centred at $0$, such that $V\cap\tilde\Omega\neq\emptyset$ and the
power series~\eqref{hjkl} converges in $V$. Then we get in
$V\cap\tilde\Omega$:
 $$H(z,w,\zeta)=\Phi(z,w,\zeta
w^m)=I_0(z,w)+I_1(z,w)\zeta w^m+I_2(z,w)\zeta^2
w^{2m}+I_3(z,w)\zeta^3 w^{3m}.$$ Comparing with $\eqref{hjkl}$ we
finally obtain that
\begin{equation*}I_0(z,w)\equiv 0,\,I_1(z,w)\equiv
0,\,I_2(z,w)w^{2m}=\sum\limits_{j\geq 0,l\geq m}h_{j2l}z^j
w^l,\,I_3(z,w)w^{3m}=\sum\limits_{j\geq 0,l\geq m}h_{j3l}z^j w^l .
\end{equation*}
This proves the proposition in the positive case. The negative
case is analogous.
\end{proof} \rm

\subsection{Proof of statement (i) of Theorem~3.3.}

\begin{proof}
We start with the proof of the representation \eqref{fralin}.
Choose a point $p\in U^0$ and an element $\mathcal
F_p=(f,g):\,U_p\longrightarrow\CC{2}$ of $F$ in a polydisc
$U_p=U^z\times U^w_p \subset U^0_p$ centred at $p$. We
consider~\eqref{e.4.7} and use the two identities $I_0(z,w)\equiv
0$ and $I_1(z,w)\equiv 0$ proved in Proposition~\ref{p.4.4}. The
first one gives $f_zg_{zz}-g_zf_{zz}=0$, so that
$\left(\frac{g_z}{f_z}\right)_z=0$ assuming $f_z\neq 0$, while the
second implies $g_z=\lambda(w)f_z$, so that
\begin{equation}\label{e.4.12}
g(z,w)=\lambda(w)f(z,w)+\mu(w)
\end{equation}
for some $\lambda(w),\mu(w)$ holomorphic in $U^w_p$.
Plugging~\eqref{e.4.12} into $I_1(z,w)\equiv 0$ yields
\begin{equation}\label{e.4.13}
-\lambda'ff_{zz}-\mu'f_{zz}+2\lambda'(f_z)^2=0.
\end{equation}
By the implicit function theorem, using the condition $f_z\neq 0$,
there exists a function $P(\zeta,w)$, holomorphic in
$\{f(U_p)\}\times U^w_p$, such that $f_z=P(f,w)$. Then
$f_{zz}(z,w)=P(f(z,w),w)P_{\zeta}(f(z,w),w)$, which can be
rewritten in a simple form $f_{zz}=PP_f$. Substituting this
into~\eqref{e.4.13} gives
$$-\lambda'fPP_f-\mu'PP_f+2\lambda'P^2=0.$$
This can be considered, for each fixed $w$, as a first-order
elementary differential equation with the independent variable $f$
and the dependent variable $P$. Separation of variables gives
$\frac{P_f}{P}=\frac{2\lambda'}{\mu'+\lambda'f}.$ After
integration we conclude that $P=\rho(w)(\mu'(w)+\lambda'(w)f)^2$
for some function $\rho(w)$ holomorphic in $U^w$. So finally we
obtain another first-order elementary ODE
$$f_z=\rho(\mu'+\lambda'f)^2$$
with the independent variable $z$ and the dependent variable $f$.
Separating variables and integrating, we get
$-\frac{1/\lambda'}{\mu'+\lambda'f}=\rho z+\nu$ for an appropriate
holomorphic function $\nu(w)$. The latter equality implies that
$f$ is linear-fractional w.r.t. $z$ in $U_p$. Changing the
notation and using~\eqref{e.4.12}, we conclude that
\begin{equation}\label{e.4.14}
f(z,w)=\frac{\alpha_1(w)z+\beta_1(w)}{\alpha_0(w)z+\beta_0(w)},\
g(z,w)=\frac{\alpha_2(w)z+\beta_2(w)}{\alpha_0(w)z+\beta_0(w)}
\end{equation}
for appropriate holomorphic functions $\alpha_0(w),...,\beta_2(w)$
in $U^w$, which is equivalent to~\eqref{fralin} restricted to the
polydisc $U_p$. The collection $\alpha_0(w),...,\beta_2(w)$ is
defined uniquely up to scaling by a function $h(w)$, holomorphic
and nonzero in $U^w$. Returning to the assumption $f_z\neq 0$, we
note that the Jacobian $f_wg_z-g_wf_z$ is nonzero in $U_p$, so
that interchanging, if necessary, $f$ and $g$, we may still assume
that the condition $f_z\neq 0$ holds true in a sufficiently small
polydisc, centred at $p$, and conclude~\eqref{e.4.14} in the
general case as well. Note also that the form~\eqref{e.4.14} is
invariant under projective transformations in the image-space
$\CP{2}$. This means that after replacing $F$ by an appropriate
composition, equation~\eqref{e.4.14} holds for a small
neighbourhood of an arbitrary point $p\in U\setminus X$.

Consider now two polydiscs $U_p$ and $U_q$, $p,q\in U\setminus X$,
with $U_p\cap U_q\neq\emptyset$, and two elements $F_p,F_q$ there
such that $F_p=F_q$ in $U_p\cap U_q$. Given the
representations~\eqref{e.4.14} in both polydiscs, we may solve a
simple multiplicative Cousin problem to show that the collections
$\alpha_0(w),...,\beta_2(w)$ can be scaled in such a way that they
coincide in $U_p\cap U_q$. This means that each fixed collection
$\alpha_0(w),...,\beta_2(w)$ in a polydisc $U_p$ can be extended
analytically along an arbitrary path in $U\setminus X$, starting
at $p$, because the mapping $F$ does, and this proves the
representation~\eqref{fralin}.

To prove~\eqref{w''(z)} we find the functions $I_2(z,w)$,
$I_3(z,w)$, using the linear-fractional
representation~\eqref{fralin}. As the functions $w^mI_2(z,w)$,
$w^{2m}I_3(z,w)$ are holomorphic in the entire neighbourhood $U$
(from Proposition~\ref{p.4.4}), for the proof of~(4.1) it suffices
to show that $I_2$ is linear and $I_3$ is cubic w.r.t. the
variable $z$. In fact, we can do that in a neighbourhood
$U_p\subset U^0$ of a point $p\in U^0$. We first suppose
$\alpha_0(w)\not\equiv 0$ and change the form of the
representation~\eqref{fralin}, rewriting it in $U_p$ for a fixed
element $F_p$ of $F$ as
\begin{equation}\label{fgaalpha}
f(z,w)=\frac{\alpha}{z+\delta}+\beta,\,g(z,w)=\frac{a}{z+\delta}+b,
\end{equation}
where $\alpha(w),\beta(w),\delta(w),a(w),b(w)$ are meromorphic in
$U^w_p$. Then $I_1\equiv 0$ and~\eqref{e.4.7} imply
\begin{equation}\label{specialrelation}
\beta'a-b'\alpha\equiv 0,
\end{equation}
which after differentiation gives
$\beta''a-b''\alpha+\beta'a'-b'\alpha'=0$. Straightforward
computations using~\eqref{specialrelation} give
\begin{equation}\label{jacobian}
J=f_wg_z-g_wf_z=\frac{a'\alpha-\alpha'a}{(z+\delta)^3},
\end{equation}
\begin{gather}\label{i2}
I_2(z,w)=\left[\frac{a\alpha''-\alpha
a''}{a'\alpha-\alpha'a}\right]+3\left[\frac{b'\alpha'-\beta'a'}{a'\alpha-\alpha'a}\right](z+\delta),\\
\label{i3} I_3(z,w)=\left[\delta''+\delta'\frac{a\alpha''-\alpha
a''}{a'\alpha-\alpha'a}\right]+\left[\frac{a''\alpha'-\alpha''
a'}{a'\alpha-\alpha'a}+3\delta'\frac{b'\alpha'-\beta'a'}{a'\alpha-\alpha'a}\right](z+\delta)+\\
\notag
+\left[\frac{\beta'a''-b'\alpha''+\alpha'b''-a'\beta''}{a'\alpha-\alpha'a}\right](z+\delta)^2+
\left[\frac{\beta'b''-b'\beta''}{a'\alpha-\alpha'a}\right](z+\delta)^3.
\end{gather}
The identities~\eqref{i2} and~\eqref{i3} demonstrate now the
desired polynomial dependence. Suppose that $\alpha_0 \equiv 0$.
Since $f$ is a local biholomorphism, $\alpha_1$ and $\alpha_2$
cannot be both  identically zero. Thus, after relabelling the
functions, we return to the previous case. This completely proves
statement (i) of Theorem 3.3.
\end{proof}

\subsection{Proof of Theorem 3.4.}
In what follows, by $\mathcal M(0)$ (resp. $\mathcal O(0)$) we
denote the space of germs at the origin of meromorphic (resp.
holomorphic) functions in $w\in\CC{}$.

\begin{proof}[Proof of Theorem 3.4]
 In order to prove
statement (i) we need, in view of Section 5.2, to show that
$\mathcal F$ extends from $U\setminus X=\{|z|<\delta\}\times
\Delta^*_\epsilon$ to $\CP{1}\times \Delta^*_\epsilon$
analytically and the restriction of $\mathcal F$ to $\CC{}\times
\Delta^*_\epsilon$ is locally biholomorphic in
$\Delta^*_\epsilon$. Using representation~\eqref{fralin}, we
extend $\mathcal F$ as
\begin{equation}\label{fralin0}
\tilde{\mathcal F}(z,w):=(\alpha_0(w)\bold z+\beta_0(w)\bold
t,\alpha_1(w)\bold z+ \beta_1(w)\bold t,\alpha_2(w)\bold
z+\beta_2(w)\bold t)\in\CP{2}
\end{equation}
(we fixed here a germ of each of the functions
$\alpha_0,...,\beta_2$ and denote by $(\bold z,\bold t)$ the
homogeneous coordinates in $\CP{1}$). To prove that
$\tilde{\mathcal F}$ is, in fact, analytic, we need to show that
the 3 expressions $\alpha_0(w)\bold z+\beta_0(w)\bold
t,\alpha_1(w)\bold z+\beta_1(w)\bold t,\alpha_2(w)\bold
z+\beta_2(w)\bold t$ cannot vanish for $(\bold z,\bold t)\neq
(0,0)$, $0<|w|<\epsilon$.

We first observe that
$\alpha_0(w^*)=\alpha_1(w^*)=\alpha_2(w^*)=0$ is not possible for
any fixed $w^*,\,0<|w^*|<\epsilon$, since otherwise,
by~\eqref{fralin}, $\mathcal F(z,w^*)$ is independent of $z$, but
$\mathcal F$ is biholomorphic in $U\setminus X$. Assume now that
for some $\bold z^*, \bold t^*, w^*$ one has
$$
\alpha_0(w^*)\bold z^*+\beta_0(w^*)\bold t^*=\alpha_1(w^*)\bold
z^*+\beta_1(w^*)\bold t^*= \alpha_2(w^*)\bold
z^*+\beta_2(w^*)\bold t^*=0 .
$$
Suppose first $\bold t^*\neq 0$. Then $\beta_j (w^*)=-\frac{\bold
z^*}{\bold t^*}\alpha_j,\,j=0,1,2$. Then for $z\ne z^*/t^*$,
$$
\mathcal \mathcal F (z, w^*) = \left[ \alpha_0(w^*) (z -
\frac{z^*}{t^*}),\ \alpha_1(w^*)(z - \frac{z^*}{t^*},\
\alpha_2(w^*)(z - \frac{z^*}{t^*}) \right]  = [\alpha_0(w^*),\
\alpha_1(w^*),\ \alpha_2(w^*)] .
$$
This means that the line $\{(z,w^*)\}$ is mapped into a point,
which is a contradiction. Similarly, if $t^*=0$ then, in view of
$z^*\neq 0$, we conclude that
$\alpha_0(w^*)=\alpha_1(w^*)=\alpha_2(w^*)=0$ which is not
possible by the above argument.

To show that all elements of $\tilde{\mathcal F}$, i.e., local
maps obtained by analytic continuation, at points lying in
$\CC{}\times \Delta^*_\epsilon$ are locally biholomorphic, we fix
$p=(z^*,w^*)\in \CC{}\times \Delta^*_\epsilon$, choose a polydisc
$U_p\subset \CC{}\times \Delta^*_\epsilon$ and replace
$\tilde{\mathcal F}$, if necessary, with $\sigma\circ
\tilde{\mathcal F}$ for an appropriate
$\sigma\in\mbox{Aut}(\CP{2})$ in order to have $\tilde{\mathcal
F}(U_p)\subset\CC{2}$. Note that $\tilde{\mathcal F}_p$ admits a
single-valued extension to $U(w^*)\times\CC{}$ for some disc
$U(w^*)$, centred at $w^*$, using~\eqref{fralin0}.
Then~\eqref{fgaalpha} and~\eqref{jacobian} show that $\mathcal
F_p$ is biholomorphic near $p$, unless
$(a'\alpha-\alpha'a)(w^*)=0$. Choosing now $z$ such that
$|z|<\delta$ and $\mathcal F(z,w^*)\in\CC{2}$, which is possible
since $\mathcal F_p$ maps an open piece of the line
$\CC{}\times\{w^*\}$  to $\CC{2}$, we conclude that $\mathcal F_p$
is not biholomorphic at $(a,w^*)\in U\setminus X$. This is a
contradiction, and statement (i) is proved.

In order to prove (ii), we first fix $p=(p_1,p_2)\in U$, $p_1,
p_2\neq 0$, and consider $Q_p$ as the graph $w=\theta_p(z)$.
Expanding as in~\eqref{e.4.8}, we get $\theta_p(z)=\bar p_2+i\bar
p_1\bar p_2^m z+O(z^2\bar p_1^2\bar p_2^{m})$. Choosing now a
possibly smaller polydisc $U$, we may assume $\theta_p(z)$ is
injective in $\{|z|<\delta\}$, \it once for all $p$ as above. \rm
Indeed, $\theta_p(z)=\theta_p(z^*)$ implies from~\eqref{e.4.8}
that $(z-z^*)[1+O(|p_1|)]=0$, and that implies injectivity of
$\theta_p(z)$ for all $p$. We may then consider the inverse
holomorphic function $z=\psi(w)$ in some domain $\Delta_p\subset
\Delta^*_\epsilon$. The graphs $w=\theta(z)$ and $z=\psi(w)$ both
coincide with $Q_p$. As $Q_p$ is simply-connected, we may consider
a single-valued restriction $\mathcal F_p$ of $\mathcal F$ to a
simply-connected neighbourhood $V$ of $Q_p$. Then $\mathcal
F_p(Q_p)$ is contained in a projective line
$\lambda_0\xi_0+\lambda_1\xi_1+\lambda_2\xi_2=0$.
From~\eqref{fralin0}, the substitution
$$
(\xi_0,\xi_1,\xi_2)=(\alpha_0(w)\bold z+\beta_0(w)\bold
t,\alpha_1(w)\bold z+ \beta_1(w)\bold t,\alpha_2(w)\bold
z+\beta_2(w)\bold t)
$$
into the equation of the projective line shows that $Q_p$ is a
subset of a bigger set
\begin{equation}\label{e.lines}
\{P(w)\bold z+Q(w)\bold t=0\}\subset \CP{1}\times
\Delta^*_\epsilon
\end{equation}
for some (multiple-valued) analytic functions $P(w),Q(w)$ in
$\Delta^*_\epsilon$, where $P(w)$ is not a zero function,
as~\eqref{e.lines} contains the graph $\{z=\psi(w)\}=Q_p$. Hence,
there exists a (multiple-valued) analytic mapping
$h_p(w):\,\Delta^*_\epsilon\longrightarrow \CP{1}$ such that the
graph $\{z=h_p(w)\}$ is contained in~\eqref{e.lines} (in fact, the
union of the  graph with a countable collection of horizontal
projective lines $\{w=const\}$ is given by~\eqref{e.lines}. The
latter means that $Q_p$ is contained in the graph $z=h_p(w)$, as
required for statement (ii).

To prove (iii) we note that the mapping $\mathcal F$ is
single-valued if and only if the functions $\alpha_j(w)$,
$\beta_j(w)$, $j=0,1,2,$ can be scaled to be single-valued. Now
for each Segre variety $Q_p$ of a point in $p=(p_1,p_2)$, $p_1,
p_2 \neq 0$, we may represent $h_p(w)$ explicitly,
using~\eqref{e.lines}, as
\begin{equation}\label{hp}
h_p(w)=-\frac{\lambda_0\beta_0(w)+\lambda_1\beta_1(w)+\lambda_2\beta_2(w)}
{\lambda_0\alpha_0(w)+\lambda_1\alpha_1(w)+\lambda_2\alpha_2(w)}.
\end{equation}
As $\mathcal F$ is locally biholomorphic, the parameter
$\lambda\in\CP{2}$ in \eqref{hp} runs over some open subset of
$\CP{2}$. This implies that $h_p(w)$ as in \eqref{hp} is
single-valued for all $p\in U\setminus X$ if and only if the
functions $\alpha_j(w)$ and $\beta_j(w)$ can be scaled to be
single-valued. This completes the proof of (iii).

The proof of (iv) also uses representation \eqref{hp} and is
analogous to (iii). However, one needs to take care of certain
details. Suppose first that $\mathcal F$ extends holomorphically
to $X$. Replacing $\mathcal F$ by $\sigma\circ \mathcal F$ for
some $\sigma\in\mbox{Aut}(\CP{2})$ if necessary, we may use the
representation \eqref{fgaalpha}. For each $z\in\Delta_r$ consider
the discrete set $E_z=\{w\in\Delta^*_\epsilon:\,z=-\delta(w)\}$.
Then, considering the two expressions $f|_{z=z_0},g|_{z=z_0}$ as
in \eqref{fgaalpha} for a fixed $z_0\in\Delta_r$, we conclude that
these expressions, defined on the set $E_{z_0}$, extend to $w=0$
meromorphically. Hence, they extend meromorphically to the disc
$\Delta_\epsilon$. The latter fact, applied to an arbitrary
$z_0\in\Delta_r$, implies that
$\alpha(w),a(w),\beta(w),b(w),\delta(w)\in \mathcal M(0)$. We
conclude that the functions $\alpha_j(w),\beta_j(w)\in\mathcal
M(0)$ in \eqref{hp}, so that $h_p(w)\in\mathcal M(0)$, as
required.

Suppose now that each of the functions $h_p(w)\in\mathcal M(0)$.
After taking a composition of $\mathcal F$ with an element of
$\mbox{Aut}(\CP{2})$, the representation \eqref{fgaalpha} can be
applied (note that, from statement (iii) of the theorem, all
functions in \eqref{fgaalpha} are single-valued). Then \eqref{hp}
takes the form
\begin{equation}\label{hpaalpha}
h_p(w)=-\delta(w)-\frac{\lambda_1\alpha(w)+\lambda_2
a(w)}{\lambda_0+\lambda_1\beta(w)+\lambda_2 \beta(w)} .
\end{equation}
 Using the fact that the right-hand side in
\eqref{hpaalpha} belongs to the class $\mathcal M(0)$ for
arbitrary $(\lambda_0,\lambda_1,\lambda_2)\in\CP{2}$, we conclude
that $\alpha(w),a(w),\beta(w),b(w),\delta(w)\in \mathcal M(0)$. We
then can assume, performing in \eqref{fralin} scaling by an
appropriate $w^l,l\in\mathbb{Z}$, that the functions
$\alpha_j(w),\beta_j(w)\in\mathcal O(0)$ in \eqref{fralin} and,
moreover, that at least one of the six functions is nonzero at
$w=0$. It is then clear that \eqref{fralin} with
$\alpha_j(w),\beta_j(w)\in\mathcal O(0)$ allows us to extend the
mapping $\mathcal F$ to any point $(z_0,0)\in X,\,z_0\in\Delta_r$,
unless there exists $z_0\in\Delta_r$ such that
$$
\alpha_0(0)z_0+\beta_0(0)=\alpha_1(0)z_0+\beta_1(0)=\alpha_2(0)z_0+\beta_2(0)=0
.
$$
We claim that this is not possible. Indeed, assume, without loss
of generality, that $z_0=0$. Then
$\beta_0(0)=\beta_1(0)=\beta_2(0)=0$, and for some $j\in\{0,1,2\}$
we have $\alpha_j(0)\neq 0$. Applying now \eqref{hp}, we conclude
that for an appropriate open dense set of the projective line,
determined by an element
$(\lambda_0,\lambda_1,\lambda_2)\in\CP{2}$, the corresponding
function $z=h(w)$, as in \eqref{hp}, satisfies $h(0)=0$. Denote by
$\mathcal Q\subset\CP{2}$ the quadric, containing $\mathcal
F(M\setminus X)$. Since the set of projective lines $L$ in
$\CP{2}$ with $L\cap\mathcal Q=\emptyset$ is open, we choose a
graph $z=h(w)$, as in \eqref{hp}, such that $h(0)=0$ and $\mathcal
F(\{z=h(w),w\neq 0\})\cap\mathcal Q=\emptyset$. However, $\mathcal
F(\{z=h(w),w\neq 0\})\cap\mathcal Q$ contains the set
$$
\mathcal F(\{z=h(w),\,\im w=\rho(h(w),\overline{h(w)},\re
w),\,0<|w|<\epsilon\}),
$$
where $\im w=\rho(z,\bar z,\re w)$ is the defining function of the
hypersurface $M$ with $d\rho(0)=0$. Since $\{z=h(w),\,\im
w=\rho(h(w),\overline{h(w)},\re w),\,|w|<\epsilon\}\subset M$ is a
nonconstant real curve passing through the origin and $\mathcal F$
is locally biholomorphic for $w\neq 0$, we obtain a contradiction.
The proof for $0<|z_0|<r$ is analogous.
\end{proof}

\subsection{Proof of statements (ii) and (iii) of Theorem~3.3.}
The following two computations furnish the proof of part~(ii).

\begin{propos}
The following relations hold for the equation \eqref{w''(z)}:
\begin{equation}\label{*relations}
C(w)=-\frac{1}{9}A^2(w),\ \
D(w)=\frac{1}{3}w^{2m}\left(\frac{A(w)}{w^m}\right)'-\frac{1}{3}A(w)B(w).
\end{equation}
\end{propos}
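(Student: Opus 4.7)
The plan is to derive both identities purely from the explicit formulas \eqref{i2} and \eqref{i3} for $I_2$ and $I_3$, together with the constraint \eqref{specialrelation}, by reading off $A,B,C,D$ as polynomial coefficients in $z$ and comparing. Since $I_2 w^m = -(Az+B)$ and $I_3 w^{2m} = -(Cz^3 + Dz^2 + Ez + F)$, it is enough to match the appropriate powers of $z$ on both sides after clearing the poles in $w$.

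The key simplification comes from the identity $\beta'a - b'\alpha = 0$ proved in \eqref{specialrelation}. Introducing the auxiliary function $\lambda(w) := \alpha/a = \beta'/b'$, so that $\alpha = \lambda a$ and $\beta' = \lambda b'$, a short computation yields the clean reductions
\begin{equation*}
\Delta := a'\alpha - \alpha'a = -\lambda' a^2, \qquad b'\alpha' - \beta'a' = \lambda' a b', \qquad \beta'b'' - b'\beta'' = -\lambda'(b')^2.
\end{equation*}
Reading the coefficient of $z$ in $I_2$ from \eqref{i2} gives $A = 3w^m b'/a$, while reading the coefficient of $z^3$ in $I_3$ from \eqref{i3} gives $C = -w^{2m}(\beta'b'' - b'\beta'')/\Delta = -w^{2m}(b')^2/a^2$. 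The first identity $C = -\tfrac{1}{9}A^2$ is then immediate.

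For the second identity, I would extract $B$ from the constant-in-$z$ part of \eqref{i2} and $D$ from the coefficient of $z^2$ in \eqref{i3}, being careful to collect the contributions of $3\delta(\beta'b'' - b'\beta'')/\Delta$ coming from the expansion of $(z+\delta)^3$. After substituting $\alpha = \lambda a$ and $\beta' = \lambda b'$ the $\lambda a''$ and one of the $\lambda b''$ terms in the numerator $\beta'a'' - b'\alpha'' + \alpha'b'' - a'\beta''$ cancel, leaving
\begin{equation*}
B = w^m\!\left(\tfrac{\lambda''}{\lambda'} + \tfrac{2a'}{a}\right) + \delta A, \qquad \frac{D}{w^{2m}} = \frac{b''}{a} - \frac{3a'b'}{a^2} - \frac{b'\lambda''}{a\lambda'} - \frac{3\delta(b')^2}{a^2}.
\end{equation*}
On the other hand $\tfrac{1}{3}w^{2m}(A/w^m)' = w^{2m}(b''a - a'b')/a^2$ and $\tfrac{1}{3}AB$ splits exactly into the three remaining terms $\tfrac{w^{2m}b'\lambda''}{a\lambda'} + \tfrac{2w^{2m}a'b'}{a^2} + \tfrac{3w^{2m}\delta(b')^2}{a^2}$, so the target relation follows by direct subtraction.

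The only real obstacle is the bookkeeping in the four-term numerator $\beta'a'' - b'\alpha'' + \alpha'b'' - a'\beta''$ combined with the $3\delta(\beta'b'' - b'\beta'')$ contribution; once the substitution $\alpha = \lambda a$, $\beta' = \lambda b'$ is made, the $a''$ and $\lambda a''$ terms cancel and the whole expression collapses into the $\lambda''$, $a'b'$, $ab''$, and $\delta(b')^2$ terms indicated above. No deeper geometric input is needed beyond the linear-fractional representation \eqref{fgaalpha} and the single scalar relation \eqref{specialrelation} that this representation has already forced on the coefficients.
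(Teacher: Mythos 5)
Your proposal is correct and follows essentially the same route as the paper: both read off $A,B,C,D$ as coefficients of powers of $z$ in \eqref{i2} and \eqref{i3}, exploit \eqref{specialrelation} by introducing the ratio of the two numerator families (your $\lambda=\alpha/a$ is just the reciprocal of the paper's $k=a/\alpha$), and finish by direct computation; I checked your intermediate formulas for $\Delta$, $A$, $C$, $B$, and $D$ and they are all consistent with the target identities. The only cosmetic difference is that the paper explicitly notes via \eqref{jacobian} that $k$ is nonconstant (so $\lambda'\not\equiv 0$ and the divisions are legitimate), a remark worth keeping.
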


\begin{proof} By taking the composition with an appropriate element $\sigma \in \mbox{Aut}(\CP{2})$,
we choose the associated mapping $\mathcal F$ to be given as in
\eqref{fgaalpha} with $\delta \ne 0$. Using the representations
$I_2 =-\frac{Az +B}{w^m}$ and $I_3 =
-\frac{Cz^3+Dz^2+Ez+F}{w^{2m}}$ from \eqref{w''(z)}
and~\eqref{complexdef}, and applying~\eqref{i2} and~\eqref{i3}, we
obtain
$$
-\frac{A(w)}{3w^m}=\frac{b'\alpha'-\beta'a'}{a'\alpha-\alpha'a},\
\
-\frac{C(w)}{w^{2m}}=\frac{\beta'b''-b'\beta''}{a'\alpha-\alpha'a}.
$$
We let $k(w)=\frac{a(w)}{\alpha(w)}$. Using~\eqref{jacobian} we
conclude that $k(w)$ is not a constant. Then, using
\eqref{specialrelation} and expressing everything in term of $k$,
$\alpha$, and $\beta$, we calculate that
\begin{equation}\label{b'kbeta}
b'=k\beta',\ \ \frac{A(w)}{3w^m}=\frac{\beta'}{\alpha},\ \
\frac{C(w)}{w^{2m}}=-\frac{\beta'^2}{\alpha^2} ,
\end{equation}
and so $C(w)=-\frac{1}{9}A^2(w)$. Further,~\eqref{i2}
and~\eqref{i3} show that
$$
-\frac{B(w)}{w^m}=\frac{a\alpha''-\alpha
a''}{a'\alpha-\alpha'a}-\frac{A(w)}{w^m}\delta,\ \
-\frac{D(w)}{w^{2m}}=\frac{\beta'a''-b'\alpha''+\alpha'b''-a'\beta''}{a'\alpha-\alpha'a}-\frac{3C(w)}{w^{2m}}\delta.
$$
Expressing everything in terms of $k$, $\alpha$, $\beta$, and
$\delta$ again gives~\eqref{*relations}.
\end{proof}

\begin{propos}
The following relations hold between the ODE \eqref{w''(z)} and
the exponential defining equation~\eqref{complexdef} of an
$m$-nonminimal hypersurface $M\in\mathcal P_0$:
\begin{gather}\notag
F(w)=2\varphi_{23}(w),\,A(w)=\pm 6i\varphi_{32}(w),\,B(w)=\pm 2i\varphi_{22}(w)-w^{m-1},\\
\label{*defequat}
E(w)=6\varphi_{33} \pm 2i(m-1)\varphi_{22}w^{m-1}-8(\varphi_{22})^2 \mp 2i\varphi'_{22}w^m.\\
\notag A(w)=\pm 3i\bar F(w).\end{gather}
\end{propos}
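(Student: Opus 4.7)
The plan is to exploit the fact, established in part~(i) of Theorem~3.3, that every Segre variety $Q_p$ of $M$ satisfies the ODE~\eqref{w''(z)}. Substituting the explicit parametrization of $Q_p$ coming from the prenormal defining equation~\eqref{complexdef} into that ODE and matching low-order coefficients in $z$ and $w'$ will then yield the formulas for $A(w)$, $B(w)$, $E(w)$, and $F(w)$.

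Fix a point $p=(a,b)$ with $a,b$ small and nonzero. The Segre variety $Q_p$ is the graph $w(z)=\bar b\,e^{i\varphi(z,\bar a,\bar b)}$, so direct differentiation yields
\begin{equation*}
w'(z)=iw\,\varphi_z(z,\bar a,\bar b),\qquad w''(z)=\frac{(w')^2}{w}+iw\,\varphi_{zz}(z,\bar a,\bar b),
\end{equation*}
the second identity following from $\varphi_z=-iw'/w$. Inserting this into~\eqref{w''(z)} and rearranging gives
\begin{equation*}
iw\,\varphi_{zz}=-\frac{Az+B+w^{m-1}}{w^m}(w')^2-\frac{Cz^3+Dz^2+Ez+F}{w^{2m}}(w')^3.
\end{equation*}
Using the expansion of $\varphi$ from~\eqref{complexdef} I would then invert the relations $w=\bar b\,e^{i\varphi}$ and $w'=iw\varphi_z$ to express $\bar a,\bar b$ as power series in $z,w,w'$ with coefficients in $w$; at the order needed, $\bar b=w-zw'+O((w')^2)$ and $\bar a=\mp iw'/w^m+O((w')^2)$, with sign determined by the sign of $M$.

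Substituting into $iw\varphi_{zz}$ converts the left-hand side above into a formal series in $z$ and $w'$, and matching against the right-hand side coefficient by coefficient gives the four desired relations. Specifically, the coefficient of $z^0(w')^2$ yields $B(w)=\pm 2i\varphi_{22}(w)-w^{m-1}$; the coefficient of $z^1(w')^2$ yields $A(w)=\pm 6i\varphi_{32}(w)$; the coefficient of $z^0(w')^3$ yields $F(w)=2\varphi_{23}(w)$; and the coefficient of $z^1(w')^3$ yields the formula for $E(w)$. The last matching is the most delicate, because four distinct sources contribute to the relevant coefficient: (i) the term $6\varphi_{33}z\bar a^3$ in $\varphi_{zz}$ producing the $6\varphi_{33}$ term, (ii) the expansion $\bar b^{m-1}=w^{m-1}(1-(m-1)zw'/w+\cdots)$ applied to the $2\varphi_{22}\bar a^2$ term in $\varphi_{zz}$ producing the $\pm 2i(m-1)\varphi_{22}w^{m-1}$ term, (iii) the Taylor expansion $\varphi_{22}(\bar b)=\varphi_{22}(w)-\varphi_{22}'(w)zw'+\cdots$ producing the $\mp 2i\varphi_{22}'w^m$ term, and (iv) the next-order correction in $\bar a^2$ obtained by iterating $w'=iw\varphi_z$ one further step, producing the $-8\varphi_{22}^2$ term.

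For the reality relation $A(w)=\pm 3i\bar F(w)$, one uses the reality condition~\eqref{reality} on $\varphi$. Expanding~\eqref{reality} at leading order and comparing the coefficient of $z^2\bar z^3(\bar w)^{m-1}$ on both sides yields the identity $\varphi_{32}(w)=\bar\varphi_{23}(w)$, where $\bar\varphi_{23}$ denotes the series with conjugated coefficients per the convention of Section~2.2. Combined with $A(w)=\pm 6i\varphi_{32}(w)$ and $F(w)=2\varphi_{23}(w)$, this immediately gives $A(w)=\pm 6i\bar\varphi_{23}(w)=\pm 3i\bar F(w)$. The main technical obstacle is the careful bookkeeping of the four contributions to the coefficient of $z^1(w')^3$, which is exactly what produces the four terms in the formula for $E$.
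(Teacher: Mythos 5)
Your proposal is correct and follows essentially the same route as the paper: substitute the Segre-family parametrization $w=\bar b\,e^{i\varphi(z,\bar a,\bar b)}$ into the ODE \eqref{w''(z)} and match coefficients (the paper organizes the matching by monomials $z^j\bar a^k$ with coefficients in $\bar b$, while you first invert to the variables $(z,w,w')$ — a cosmetic difference), and derive $A=\pm 3i\bar F$ from the reality condition \eqref{reality} via $\varphi_{32}=\bar\varphi_{23}$ exactly as the paper does. One small bookkeeping caveat: in your itemization of the coefficient of $z(w')^3$, the term $\pm 2i(m-1)\varphi_{22}w^{m-1}$ does \emph{not} come from the expansion of $\bar b^{m-1}$ alone (that source by itself yields the opposite sign); the second-order correction to $\bar a$, namely $\bar a=\mp i\tfrac{w'}{w^m}\mp i(m-1)\tfrac{z(w')^2}{w^{m+1}}\pm 2\varphi_{22}\tfrac{z(w')^2}{w^{2m}}+\cdots$, contributes both the $-8\varphi_{22}^2$ term and a second $(m-1)$-term of twice the magnitude and opposite sign, and only the sum of the two $(m-1)$-contributions gives the stated formula for $E$.
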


\begin{proof}
Consider the case when $M$ is positive. We use the form
$Q_{(a,b)}=\{w=\bar be^{i\varphi(z,\bar a,\bar b)}\}$ for Segre
varieties of $M$ and substitute this representation into
\eqref{w''(z)}. As a result we obtain an identity for two power
series in $z,\bar a,\bar b$.  We rewrite both sides of this
identity as power series in $z$ and $\bar a$ with coefficients
depending on $\bar b$. If we equate the coefficients of $\bar
a^3$, we obtain $2\phi_{23}(\bar b) = F(\bar b)$. If we equate the
terms $z \bar a^2$ we obtain $6i \phi_{32} (\bar b) = A(\bar b)$.
Similar computations for $\bar a^2$ and $z^3 \bar a$ give the
formulas for $B$ and $E$.

In order to prove the relation $A(w)=3i\bar F(w)$ we consider the
reality condition~\eqref{reality} as equality of power series in
$z$, $\bar z$, and $\bar w$, and compare the terms with $z^3\bar
z^2$. Taking into account that $\varphi$ does not contain $z^2\bar
z$-degree terms (as $M\in\mathcal P_0$),  we get
$\varphi_{32}(w)=\bar\varphi_{23}(w)$, which gives, using
\eqref{*defequat}, $A(w)=3i\bar F(w)$, as required. The proof in
the negative case is analogous.
\end{proof}

Propositions 5.3 and 5.4 prove statement (ii) of Theorem 3.3.

\smallskip

To prove statement (iii) we argue as in the proof of statement
(ii) of Theorem 3.4 and conclude that there exists a possibly
smaller associated neighbourhood $U$ such that each Segre variety
$Q_p,\,p\in\Delta^*_\delta\times\Delta^*_\epsilon$, is the graph
of an injective function $w_p(z)$, so that it can be also
represented as a graph $z=z_p(w)$. It is straightforward then to
recalculate the derivatives:
$$
w_z=\frac{1}{z_w},\ w_{zz}=\left(\frac{1}{z_w}\right)_w\cdot
w_z=-\frac{z_{ww}}{(z_w)^3}.
$$
Substituting these into \eqref{w''(z)} we obtain~\eqref{d2z}, so
that all the functions $z_p(w)$ satisfy~\eqref{d2z}.

The injectivity of the correspondence $M\lr \mathcal E(M)$ follows
from statement (ii). This completely proves the theorem.

\section{Equation $\mathcal E(M)$ and the analytic continuation}

The main conclusion that can be drawn from the results of the
previous section is that we can associate with a hypersurface
$M\subset\CC{2}$ of class $\mathcal P_0$ the complex differential
equation $\mathcal E(M)$, given by \eqref{d2z} and satisfying the
relations \eqref{ODErelations}, in such a way that the Segre
varieties of $M$ are open domains on the graphs of solutions of
the equation $\mathcal E(M)$. In particular, statements (iii) and
(iv) of Theorem 3.4 admit the following ODE-interpretation:\it

\smallskip

All solutions in the annulus $\Delta^*_\epsilon$ of the equation
$\mathcal E(M)$ exist as globally defined, possibly
multiple-valued, analytic mappings
$h:\,\Delta^*_\epsilon\longrightarrow\CP{1}$. Furthermore:

\smallskip

\noindent$\mbox{(iii)}'$ The analytic mapping $\mathcal
F:\,U\setminus\{w=0\}\longrightarrow\CP{2}$ associated with $M$ is
single-valued if and only if all the solutions of the equation
$\mathcal E(M)$ are single-valued mappings
$\Delta^*_\epsilon\lr\CP{1}$.

\smallskip

\noindent$\mbox{(iv)}'$ The analytic mapping $\mathcal
F:\,U\setminus\{w=0\}\longrightarrow\CP{2}$ associated with $M$
extends to the complex locus $\{w=0\}$ holomorphically if and only
if all local solutions of the equation $\mathcal E(M)$ extend
meromorphically to $\Delta_\epsilon$. \rm

\smallskip

Statements (iii)' and (iv)' now give a hint on how to prove
Theorem 3: we need to show the moderate growth of solutions of the
ODE $\mathcal E(M)$ as $w\longrightarrow 0$. This allows us to
reduce Theorem 3 to a question that can be formulated purely in
terms of analytic theory of differential equations. Realization of
this strategy is the content of Sections~6 and~7.

\subsection{Fuchsian and non-Fuchsian hypersurfaces.}
Equation $\mathcal E(M)$ obtained in Section~5 is an ordinary
second order meromorphic differential equation defined in the
domain $\Delta_\epsilon\subset\CC{}$. $\mathcal E(M)$ is
polynomial w.r.t. the unknown function $z$ and its derivative
$z'$, and has in $\Delta_\epsilon$ a unique (and hence isolated)
meromorphic singularity at the point $w=0$. The study of this type
of equations was initiated by Poincar\'e and Painlev\'e
(see~\cite{poincare}, \cite{golubev}, \cite{ai}, \cite{vazow}),
and it continues to be an active area of research (see, for
example,~\cite{ilyashenko},\cite{bolibruh},
\cite{laine},\cite{gromak},\cite{iwasaki} and references therein).
In his celebrated work \cite{painleve} Painlev\'e classified
second order complex ODEs, rational in the dependent variable $z$
and its derivative, meromorphic in some domain $\Omega$ in the
independent variable $w$, and having no movable critical points
(ODEs of this type are called \it ODEs of class $\mathcal P$). \rm
The mapping, bringing an ODE of class $\mathcal P$ to its standard
form in this classification, is locally biholomorphic in
$\CP{1}\times\Omega$ and is linear-fractional in the dependent
variable (see, e.g., \cite{ai}). Note that the associated mapping
$\mathcal F$, considered in the present paper, has the above
described form and brings the associated ODE $\mathcal E(M)$ to
its standard form $z''=0$. Thus real hypersurfaces, considered in
the paper, are associated with ODEs of class $\mathcal P$ with the
simplest standard form $z''=0$. This explains the $\mathcal
P_0$-notation for them.

As explained in Section 2, in the particularly important \it
linear \rm case the behaviour of solutions for the ODE $\mathcal
E(M)$ is characterized by the Fuchsian condition. The Fuchsian
type condition for a hypersurface $M\in\mathcal P_0$, described in
Introduction, can be stated in terms of the associated equation
$\mathcal E(M)$ and is imposed by a similarity with the linear
case. To show that, we first observe that \it a hypersurface
$M\in\mathcal P_0$ satisfies the Fuchsian type condition if and
only if the associated equation $\mathcal E(M)$ satisfies
\begin{equation}\label{fuchsianODE}
\mbox{ord}_0 B(w)\geq m-1,\,\mbox{ord}_0 E(w)\geq
2m-2,\,\mbox{ord}_0 A(w) =\mbox{ord}_0 F(w)\geq \frac{3}{2}(m-1).
\end{equation} \rm
The formulated statement follows directly from formulas
\eqref{ODEviadef}. Here for a nonzero function $h(w) \in \mathcal
M(0)$ we denote by $\mbox{ord}_0 h$ the order of vanishing of $h$
if it is holomorphic at $0$, and the negative order of pole for
$h$ otherwise.  \rm

Next we investigate the Fuchsian type condition. For that we
introduce an alternative to~\eqref{fuchsianODE} description.

\begin{dfn} A hypersurface $M\in\mathcal P_0$ is called \it $l$-reducible,\,$l\in\mathbb Z$, if
the change of variables $Z=zw^l,\,W=w$ brings the associated ODE
$\mathcal E(M)$ to an ODE of the form
\begin{equation}\label{lreducedODE}
Z''=\frac{1}{W}(\hat A Z+\hat B)Z'+\frac{1}{W^2}(\hat CZ^3+\hat
DZ^2+\hat EZ+\hat F)
\end{equation}
for some holomorphic near the origin functions $\hat A(W),\hat
B(W),\hat C(W),\hat D(W),\hat E(W),\hat F(W)$.
\end{dfn}

The $l$-reducibility condition turns out to be equivalent to the
Fuchsian type. In particular, it is a biholomorphic invariant of
$M$.

\begin{propos} $\mbox{}$

\smallskip

(1) A hypersurface $M\in\mathcal P_0$ is of Fuchsian type if and
only if the associated ODE $\mathcal E(M)$ is $l$-reducible for
some $l\geq 0$. Moreover, $l$ can be chosen in such a way that the
polynomial $\hat C(0)t^3+\hat D(0)t^2+\hat E(0)t+\hat F(0)$ is not
a nonzero constant.

\smallskip

(2) The Fuchsian type condition for a nonminimal hypersurface
$M\subset\CC{2}$, spherical in the complement to the complex
locus, is biholomorphically invariant. In particular, this
condition is independent of the choice of prenormal coordinates.
\end{propos}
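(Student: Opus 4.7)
The plan is to settle (1) by a direct change-of-variable computation, using the algebraic relations in \eqref{ODErelations} to collapse the six resulting inequalities, and then to obtain (2) as a consequence of (1) by reformulating $l$-reducibility as an intrinsic property of the globalized Segre varieties of $M$.

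For (1), I would substitute $Z=zw^l$, $W=w$ into \eqref{d2z} using
\begin{equation*}
z' = W^{-l}(Z' - lZ/W), \qquad z'' = W^{-l}(Z'' - 2lZ'/W + l(l+1)Z/W^2).
\end{equation*}
After substitution, multiplication by $W^l$, and collection of terms into the form \eqref{lreducedODE}, the six new coefficients are
\begin{gather*}
\hat A = Aw^{1-m-l},\quad \hat B = Bw^{1-m}+2l,\quad \hat C = Cw^{2-2m-2l},\\
\hat D = Dw^{2-2m-l} - lAw^{1-m-l},\quad \hat E = Ew^{2-2m} - lBw^{1-m} - l(l+1),\quad \hat F = Fw^{2-2m+l}.
\end{gather*}
Requiring each to be holomorphic at $W=0$ gives six lower bounds on $\mbox{ord}_0$ of $A,\ldots,F$. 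From \eqref{ODErelations} one has $\mbox{ord}_0 C = 2\,\mbox{ord}_0 A$ and $\mbox{ord}_0 A = \mbox{ord}_0 F$, and the formula $D=\tfrac{1}{3}w^{2m}(A/w^m)'-\tfrac{1}{3}AB$ combined with $\mbox{ord}_0 B\geq m-1$ yields $\mbox{ord}_0 D \geq \mbox{ord}_0 A + m - 1$, so the bounds for $\hat C$ and $\hat D$ are automatic from the bound for $\hat A$. The system thus collapses to
\begin{equation*}
\mbox{ord}_0 B \geq m-1,\quad \mbox{ord}_0 E \geq 2m-2,\quad \mbox{ord}_0 A = \mbox{ord}_0 F \geq \max(m+l-1,\,2m-l-2),
\end{equation*}
and minimization of the last bound over $l\in\mathbb{Z}_{\geq 0}$ yields $\mbox{ord}_0 A \geq \lceil 3(m-1)/2\rceil$, which is precisely \eqref{fuchsianODE}. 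For the addendum, if $A\not\equiv 0$ I would take $l$ to be its maximal admissible value $l=\mbox{ord}_0 A - m + 1$, which is $\geq 0$ by the Fuchsian bound: then $\hat A(0)\neq 0$, hence $\hat C(0) = -\hat A(0)^2/9 \neq 0$, so $\hat C(0)t^3 + \hat D(0)t^2 + \hat E(0)t + \hat F(0)$ has nonzero cubic coefficient and is therefore not a nonzero constant. If $A\equiv 0$, then $F\equiv C\equiv D\equiv 0$, any $l\geq 0$ works, and the polynomial reduces to $\hat E(0)\,t$, which is either zero or of degree exactly one.

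For (2), any change of prenormal coordinates is a biholomorphism $\Phi:(\CC{2},0)\lr(\CC{2},0)$ with $\Phi(M)=M'$, and since both $M,M'\in\mathcal P_0$ share the complex locus $X=\{w=0\}$, one has $\Phi(X)=X$, so $\Phi(z,w)=(f(z,w),\,wg(z,w))$ with $g(0)\neq 0$. By (1), the Fuchsian type condition in either coordinate system is equivalent to $l$-reducibility of the associated ODE $\mathcal E(M)$ (resp.\ $\mathcal E(M')$) for some $l\geq 0$. I would prove invariance via the intrinsic reformulation: the substitution $Z=zw^l$ brings \eqref{d2z} into a Briot--Bouquet type equation at $W=0$, whose solutions $Z(W)$ are bounded as $W\to 0$; equivalently, the solutions of $\mathcal E(M)$---which by Theorem~3.4(ii) are the globalized Segre varieties of $M$---satisfy $|z(w)|=O(|w|^{-N})$ as $w\to 0$ for some $N$. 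Since $\Phi$ sends Segre varieties of $M$ bijectively to Segre varieties of $M'$ and is biholomorphic in a neighbourhood of $X$, this moderate growth is intrinsic to the CR-germ; applying (1) in the target coordinates then yields the Fuchsian property for $M'$. The main obstacle is making the transfer of moderate growth under $\Phi$ rigorous, which requires that the $z$-dependence of $f$ be linear-fractional. I would establish this by essentially the same method as in the proof of Theorem~3.4(i): adapting the explicit solution of the Monge--Amp\`ere-like identities $I_0=I_1=0$ to the self-correspondence of prenormal Segre families induced by $\Phi$, from which the linear-fractional form of $f$, and thus (2), follow.
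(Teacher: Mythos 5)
Your treatment of part (1) is correct and is essentially the paper's own argument: an explicit computation of $\hat A,\dots,\hat F$ under $Z=zw^l$, collapse of the six order conditions via \eqref{ODErelations}, and the choice $l=\mbox{ord}_0 F-m+1$ for the addendum (the paper organizes this through an auxiliary integer $s$ with $\mbox{ord}_0 q_0=-2+s$ and treats $F\equiv 0$ separately, but the content, including the automatic nature of the $\hat C,\hat D$ conditions, is the same).

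Part (2), however, has a genuine gap. Your route rests on two claims that are not established: first, that solutions of an equation of the form \eqref{lreducedODE} are bounded (or at least of moderate growth) as $W\to 0$; second --- and this is the direction you actually need in order to conclude that the target hypersurface is Fuchsian --- that moderate growth of all solutions of the associated ODE forces $l$-reducibility. The first claim is false as stated even for equations arising from the paper's own examples: for $M_\gamma$ of Example 6.6 the associated ODE $z''=-\frac{1}{w}z'+\frac{\gamma^2}{w^2}z$ is already of the form \eqref{lreducedODE} with $l=0$, yet its solutions behave like $w^{-\gamma}$ and are unbounded (and along spiraling paths power functions $w^\lambda$ with $\lambda\notin\RR{}$ escape any polynomial bound, so "moderate growth" must be formulated sectorially, which creates further friction when you push it through a biholomorphism near $X$). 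More seriously, for the genuinely nonlinear equations \eqref{d2z} there is no analogue of either half of Fuchs's theorem: the Briot--Bouquet theory cited in the paper only produces one holomorphic solution through the singular point under a nonresonance hypothesis and says nothing about the growth of the general solution, and the converse implication (regular growth $\Rightarrow$ Fuchsian/$l$-reducible) exists only for linear scalar ODEs. So the "intrinsic reformulation" on which your transfer argument is built is unavailable. The paper avoids this entirely by a direct computation: if $H:(z,w)\lr(zw^l,w)$ reduces $\mathcal E(M)$ and $G$ is the biholomorphism, then $H\circ G^{-1}$ has the explicit form $(f(z,w)w^l+O(|z|^2|w|^l)+O(|w|^{l+1}),\,g(z,w))$ and carries $\mathcal E(\tilde M)$ to an equation of the form \eqref{lreducedODE}; rerunning the order count of part (1) on the coefficients of $\mathcal E(\tilde M)$ then yields \eqref{fuchsianODE} for $\tilde M$. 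Replacing your growth argument by this direct computation is the way to close the gap; the final step of your proposal (recovering the linear-fractional $z$-dependence of the equivalence via $I_0=I_1=0$) is then unnecessary.
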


\begin{proof}
(1) Suppose first that $F(w)\equiv 0$ in $\mathcal E(M)$. It
follows from \eqref{ODErelations} that $A=C=D=F\equiv 0$, and the
equation $\mathcal E(M)$ is linear. In this case it can be seen
immediately that the Fuchsian type condition is equivalent to
$\mathcal E(M)$ being Fuchsian in the sense of theory of linear
ODEs, which means $0$-reducibility. Moreover, the polynomial $\hat
C(0)t^3+\hat D(0)t^2+\hat E(0)t+\hat F(0)$ has a root $t_0=0$,
which proves the proposition under the assumption $F(w)\equiv 0$.

Consider now the case when $F\not\equiv 0$. Suppose first that $M$
is $l$-reducible for some $l\in\mathbb Z$. Perform in the equation
$\mathcal E(M)$ associated with the hypersurface $M\in\mathcal
P_0$ the change of variables $Z=zw^l,\,W=w$, and rewrite the new
equation in the form $Z''=(p_1Z+p_0)Z'+(q_3Z^3+q_2Z^2+q_1Z+q_0)$
for certain $p_i,q_j\in\mathcal M(0)$. Then, by recalculating the
derivatives and substituting them into $\mathcal E(M)$, it is not
difficult to check that the properties $\mbox{ord}_0p_0\geq -1$
and $\mbox{ord}_0q_1\geq -2$ hold simultaneously if and only if
the terms $\frac{B(w)}{w^m}$ and $\frac{E(w)}{w^{2m}}$ have the
same properties simultaneously, so that from $l$-reducibility we
have $\mbox{ord}_0 B\geq m-1,\,\mbox{ord}_0 E\geq 2m-2$. Also we
compute that $\mbox{ord}_0 q_0=\mbox{ord}_0 F+l-2m$. From the
$l$-reducibility, $\mbox{ord}_0 q_0=-2+s$ for some integer $s\geq
0$, and thus $l=2m-2+s-\mbox{ord}_0 F$. From \eqref{*defequat} we
have $\mbox{ord}_0 A=\mbox{ord}_0 F,\,\mbox{ord}_0 C=2\mbox{ord}_0
F$, so that, after a computation, $\mbox{ord}_0
p_1=2\mbox{ord}_0F-3m-s+2$. From the $l$-reducibility now
$2\mbox{ord}_0F-3m-s+2\geq -1$, and we obtain $2\mbox{ord}_0 F\geq
s+3(m-1)\geq 3(m-1)$, as required for the Fuchsian type.

Suppose now that $M$ is of Fuchsian type. Put $l:=\mbox{ord}_0
F-m+1$. Now arguing as above and using $\mbox{ord}_0
\frac{B(w)}{w^m}\geq-1,\,\mbox{ord}_0
\frac{E(w)}{w^{2m}}\geq-2,\,\mbox{ord}_0 A=\mbox{ord}_0
F,\,\mbox{ord}_0 C=2\mbox{ord}_0 F,$ we get $\mbox{ord}_0 p_0\geq
-1,\,\mbox{ord}_0 q_1\geq -2,\,\mbox{ord}_0 q_0=\mbox{ord}_0
F+l-2m\geq -2,\,\mbox{ord}_0 p_1= \mbox{ord}_0 A - l - m =
-1,\,\mbox{ord}_0 q_3= \mbox{ord}_0 C-2l-2m= -2,\,\mbox{ord}_0
q_2\geq -2$, so that we obtain an equation of the form, required
for $l$-reducibility. The integer $l$ here is equal to
$\mbox{ord}_0 F-m+1\geq\frac{m-1}{2}$ and thus is nonnegative. To
check that the polynomial $\hat C(0)t^3+\hat D(0)t^2+\hat
E(0)t+\hat F(0)$ is not a constant, we note that for the latter
choice of $l$ we have $\mbox{ord}_0 q_3=-2$, so that $\hat
C(0)\neq 0$. This finally proves (1).

In order to prove (2) we consider two hypersurfaces $M,\tilde
M\in\mathcal P_0$ and a local biholomorphism $G:\,(M,0)\lr (\tilde
M,0)$ between them. Suppose that $M$ is of Fuchsian type. Then,
according to (1), the transformation $H:\,(z,w)\lr (zw^l,w)$ for
an appropriate integer $l\geq 0$ brings $\mathcal E(M)$ into an
ODE of the form \eqref{lreducedODE}. Hence the transformation
$H\circ G^{-1}$, which has the form
$(f(z,w)w^l+O(|z|^2|w|^l)+O(|w|^{l+1}),g(z,w))$ for an appropriate
local biholomorphism $(f,g):\,(\CC{2},0)\lr (\CC{2},0)$, brings
$\mathcal E(\tilde M)$ into an ODE of the form
\eqref{lreducedODE}. Arguing now similarly to the proof of (1) we
deduce from here that $\tilde M$ is of Fuchsian type, which proves
statement (2) and the proposition.
\end{proof}

\begin{dfn}
Let $M\in \mathcal P_0$ be an $m$-nonminimal hypersurface of
Fuchsian type. The ODE
\begin{equation*}
Z''=\frac{1}{W}(\hat A Z+\hat B)Z'+\frac{1}{W^2}(\hat CZ^3+\hat
DZ^2+\hat EZ+\hat F),
\end{equation*}
obtained from $\mathcal E(M)$ by the change of variables
$Z=zw^l,\,W=w$ with $l:=\mbox{ord}_0 F-m+1\geq 0$ (as in the proof
of Proposition 6.2), is called \it the associated ODE $\mathcal
E^r(M)$.
\end{dfn}

According to Proposition 6.2, the associated ODE $\mathcal E^r(M)$
always exists in the Fuchsian type case, and the polynomial $\hat
C(0)t^3+\hat D(0)t^2+\hat E(0)t+\hat F(0)$ is not a nonzero
constant.\rm

\subsection{Hypersurfaces with rotational symmetries. Examples.}

The associated ODE $\mathcal E(M)$ is particularly simple in the
special case when a hypersurface $M\in\mathcal P_0$ is invariant
under the group $(z,w) \lr (e^{it}z,\,w)$, $t\in\RR{}$, of
rotational symmetries. As each above rotational symmetry sends a
Segre variety of $M$ into another Segre variety, it must be a
symmetry of the ODE $\mathcal E(M)$, and it is not difficult to
see that the associated ODE $\mathcal E(M)$ is linear in the
rotational case. \rm Thus we conclude that \it Theorems 2 and 3.5
follow from the Fuchs theorem in the rotational case. \rm This
also shows that the regularity condition in Theorem~3 (namely, the
Fuchsian type condition) is optimal in the rotational case.

\begin{rema} As follows from the described connection between rotational
hypersurfaces of class~$\mathcal P_0$, Theorem~3.15 in
\cite{divergence} and Theorem 3.3 of the present paper, \it the
algorithm for obtaining nonminimal spherical hypersurfaces with
rotational symmetries, described in Remark 3.18 in
\cite{divergence}, gives a complete description of hypersurfaces
of class $\mathcal P_0$ with rotational symmetries. \rm
\end{rema}

However, as the example of hypersurfaces $M_{R,0}$ in \cite{kl}
shows, the investigation of nonminimal spherical hypersurfaces in
$\CC{2}$ {\it cannot} be reduced to the rotational case. Below we
demonstrate applications of Theorems 1 and 2 (or, alternatively,
Theorems 3.4 and 3.6) and give explicit examples of the associated
ODE construction in the rotational case.

\begin{ex}
The 1-nonminimal hypersurfaces $L_s,\,s\in\RR{},s\neq 0$, with the
complex locus $\{w=0\}$, given by
$$v=u\tan\left(\frac{1}{s}\ln(1+s|z|^2)\right),$$ were obtained in
\cite{belnew} as examples of nonminimal hypersurfaces with
$4$-dimensional infinitesimal automorphism algebras (see also
\cite{kl}). It is not difficult to check that each $L_s$ is of
class $\mathcal P_0$. Indeed, one has to check only the sphericity
of $L_s$ at Levi nondegenerate points, and this follows from the
fact that only spherical hypersurfaces admit $\geq 4$ dimensional
infinitesimal automorphism algebras at Levi nondegenerate
points~\cite{belold}. The complex defining equation of $L_s$ has
the form $w=\bar w \exp\left(\frac{2i}{s}\ln(1+sz\bar z)\right)$.
For a point $(a,b)\in\CC{2}$ with $a,b\neq 0$ its Segre variety
$Q_{(a,b)}$ equals (locally)
$$
z(w)=h_{(a,b)}(w)=\frac{1}{s\bar a}\left(\frac{w}{\bar
b}\right)^{\frac{s}{2i}}-\frac{1}{s\bar a}.
$$
Clearly, for any $s\in\RR{},a,b\in\CC{},s,a,b\neq 0$, the germ
$h_{(a,b)}(w)$ does not extend to the origin meromorphically, so
by Theorem 2 the associated mapping $\mathcal F$ \it does not \rm
extend to the complex locus holomorphically.
\end{ex}

The next example illustrates in detail the connection between a
family of hypersurfaces $M_\gamma\in \mathcal P_0$, the associated
ODEs $\mathcal E(M_\gamma)$, and the associated mappings $\mathcal
F_\gamma$.

\begin{ex} For the 1-nonminimal hypersurfaces $M_{\gamma}\subset\CC{2},\,\gamma\in\mathbb R\setminus\{0\}$,
containing the complex hypersurface $X=\{w=0\}$ and given in a
neighbourhood of the origin by
$$
w=\bar w \left(i|z|^2+\sqrt{1-|z|^4}\right)^{\frac{1}{\gamma}}
$$
(see \cite{kl}), the family of Segre varieties near the origin has
the form $Q_{(a,b)}=\{w=\bar b(iz\bar a+\sqrt{1-z^2\bar
a^2})^{\frac{1}{\gamma}}\}$. Elementary computations show that
$Q_{(a,b)}$ with $a,b\neq 0$ are open domains on the graphs
\begin{equation} \label{mgamma}
\tilde Q_{(a,b)}=\left\{z=\frac{1}{2i\bar
a}\left(\frac{w^\gamma}{\bar b^\gamma}- \frac{\bar
b^\gamma}{w^\gamma}\right) \right\}.
\end{equation}
By Theorem 2, the associated mapping $\mathcal F_\gamma$ extends
to the complex locus holomorphically if and only if
$\gamma\in\mathbb{Z}$. In fact one can see that $F_\gamma$ is
given by $z\lr zw^{\gamma},\,w\lr w^{2\gamma}.$

Following the elimination process described in Section~2 it is not
difficult to conclude that all the graphs $\tilde
Q_{(a,b)},\,a,b\neq 0$ satisfy the linear ODE
$$z''=-\frac{1}{w}z'+\frac{\gamma^2}{w^2}z,$$ which coincides, by uniqueness, with $\mathcal
E(M_\gamma)$. This ODE is Fuchsian for any $\gamma\in\RR{}$.
\end{ex}

The next two examples show that for $m>1$ the ODE $\mathcal E(M)$
associated with a hypersurface of class $\mathcal P_0$ may be both
of Fuchsian and non-Fuchsian type.

\begin{ex} Consider the $m$-nonminimal with $m\geq 2$ hypersurfaces $M^m_{0}\in\mathcal P_0$ (see \cite{divergence}),
given near the origin by the complex defining equations
\begin{equation}\label{Mm0} w=\bar w\left(1+\frac{i}{2}(1-m)\bar
w^{m-1}\ln\frac{1}{1-2|z|^2}\right)^{\frac{1}{1-m}}.
\end{equation} The Levi nondegenerate part of $M^m_0$ is the preimage of a domain in the quadric $\mathcal Q=\left\{2|Z|^2+|W|^2=1
\right\}\subset\CC{2}$ under the single-valued mapping
$$\Lambda_m:\, (Z,W) = \left(z,\, e^{\frac{2i}{1-m}w^{1-m}}
\right).$$ It follows that the mapping $\Lambda_m$ is associated
with $M^m_0$. Remarkably, \it each mapping $\Lambda_m$ does not
extend to the complex locus $\{w=0\}$, even though it is
single-valued. \rm From the elimination procedure from Section 2
(or the arguments from \cite{divergence}), we conclude that the
associated ODE $\mathcal E(M^m_0)$ is of non-Fuchsian form
$z''=\left(\frac{2i}{w^m}-\frac{m}{w}\right)z'$. This agrees with
Theorem~3.
\end{ex}

\begin{ex} For the 2-nonminimal hypersurface $M\in\mathcal P_0$, given by $v=(u^2+v^2)|z|^2$, it is not difficult to see
that the polynomial mapping $\mathcal F(z,w)= (zw,w)$ maps $M$
into the hyperquadric $\{\im w=|z|^2\}\subset\CC{2}$. The
associated ODE $z''=-\frac{2}{w}z'$ is Fuchsian. \end{ex}

\begin{rema} As the family of hypersurfaces $M^m_\beta\in\mathcal P_0$ in
\cite{divergence} shows, the associated mapping $\mathcal F$
cannot be in general expressed in terms of elementary functions
when $m>1$, even though the associated ODE is given by elementary
functions. In this case the extension/no extension dichotomy can
be resolved \it only \rm using the associated equation $\mathcal
E(M)$ and Theorem 3.
\end{rema}

\subsection{Reduction of Theorem 3 to the existence of a holomorphic solution}

In this subsection we perform an important step toward the proof
of sufficiency in Theorem 3, reducing it to Theorem~3.5, i.e., the
question that can be formulated purely in terms of analytic theory
of differential equations.

\begin{propos} Suppose that an $m$-nonminimal hypersurface $M\in\mathcal P_0$
is of Fuchsian type and the associated mapping $\mathcal F$ is
single-valued. Suppose, in addition, that the associated equation
$\mathcal E^r(M)$ admits a holomorphic at the origin solution
$z=h(w)$. Then $\mathcal F$ extends to the complex locus
$X=\{w=0\}$ holomorphically.\end{propos}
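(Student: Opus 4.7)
The plan is to translate the reduced equation $\mathcal{E}^r(M)$ by the given holomorphic solution $h$, recast it as a Briot-Bouquet type first-order system at a singular zero of its right-hand side, use single-valuedness together with the classical analytic theory of such systems to obtain moderate growth (and hence meromorphicity) of all solutions, and then conclude via Theorem~3.4(iv) that $\mathcal{F}$ extends to $X$.

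Since $Z=zW^l$ with $l\ge 0$ is a substitution preserving meromorphicity at $W=0$ in both directions, it suffices to prove that every solution of $\mathcal{E}^r(M)$ extends meromorphically to the origin. Substituting $Z=h(W)+Y$ into $\mathcal{E}^r(M)$ gives
\begin{equation*}
Y''=\tfrac{1}{W}\bigl(\tilde A(W)Y+\tilde B(W)\bigr)Y'+\tfrac{1}{W^2}\bigl(\tilde C(W)Y^3+\tilde D(W)Y^2+\tilde E(W)Y\bigr),
\end{equation*}
with coefficients holomorphic at $W=0$ and $Y\equiv 0$ a solution (the constant term vanishes precisely because $Z=h$ solves $\mathcal{E}^r(M)$). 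Setting $U:=WY'$ converts this into the first-order system
\begin{equation*}
WY'=U,\qquad WU'=\bigl(1+\tilde B(W)+\tilde A(W)Y\bigr)U+\tilde E(W)Y+\tilde D(W)Y^2+\tilde C(W)Y^3,
\end{equation*}
with right-hand side vanishing at $(Y,U,W)=(0,0,0)$, i.e., a Briot-Bouquet system whose linearization has indicial eigenvalues $\lambda_{1,2}$ solving $\lambda^2-(1+\tilde B(0))\lambda-\tilde E(0)=0$. By the assumed single-valuedness of $\mathcal{F}$ together with Theorem~3.4(iii), every solution of $\mathcal{E}^r(M)$ is single-valued in $\Delta^*_\epsilon$, which descends to single-valuedness of $Y(W)$. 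Combining the Fuchsian structure of the linear part with single-valuedness, I argue that every solution satisfies $|Y(W)|+|U(W)|=O(|W|^{-N})$ as $W\to 0$ for some $N$; Riemann's extension theorem applied to $W^N Y(W)$ then yields meromorphic extendability of each $Y$, hence of each solution of $\mathcal{E}^r(M)$ and of $\mathcal{E}(M)$, and the conclusion follows from Theorem~3.4(iv).

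The hard part is the moderate growth bound, particularly in the resonant case where $\lambda_1-\lambda_2\in\mathbb{Z}$ or one of the $\lambda_i$ is a positive integer. In the nonresonant case the classical Poincar\'e-Dulac linearization brings the Briot-Bouquet system to its linear part, whose solutions are (generalized) power functions of moderate growth, and large-initial-data solutions are controlled using the polynomial nature of the right-hand side in $(Y,U)$. In the resonant case the normal form retains resonant monomials and the direct linearization may carry logarithmic obstructions; here the single-valuedness hypothesis must be invoked to exclude the logarithms and force the polynomial asymptotic bound, via a Poincar\'e perturbation argument analogous to the one used later in the proof of Theorem~3.5.
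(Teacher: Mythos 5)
Your reduction to the translated equation with $Y\equiv 0$ as a solution and the Briot--Bouquet reformulation are fine as far as they go, but the proposal has a genuine gap at its central step: the claimed moderate-growth bound $|Y(W)|+|U(W)|=O(|W|^{-N})$ for \emph{all} solutions is asserted, not proved, and the tools you invoke do not deliver it. The classical Briot--Bouquet theory (and likewise the Poincar\'e perturbation method used later for Theorem~3.5) produces \emph{one} holomorphic solution through the singular zero of the right-hand side; it says nothing about the growth of the remaining two-parameter family of solutions of the nonlinear equation. Poincar\'e--Dulac linearization does not apply in the form you suggest either: for a singular system $WY'=A(Y,W)$ the normalizing transformation is in general only formal or sectorial, and even where it converges it is a \emph{local} diffeomorphism near $(Y,U)=(0,0)$, so it cannot control solutions with large initial data; controlling all solutions of a nonlinear singular ODE near $W=0$ is precisely the kind of Painlev\'e-type question that cannot be settled by soft linearization arguments plus ``the polynomial nature of the right-hand side.'' Single-valuedness alone is also not known, a priori, to force meromorphy of solutions of a nonlinear equation.

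The paper avoids this entirely by never analyzing the nonlinear flow directly. It uses the linear-fractional representation of the associated mapping (Theorem~3.4, formula \eqref{hp}): every solution of $\mathcal E(M)$ is a ratio of linear combinations of the six single-valued functions $\alpha_j,\beta_j$, so meromorphy of all solutions reduces to meromorphy of these six functions. The hypothesis that $\mathcal E^r(M)$ has a holomorphic solution $h$ is used to normalize $\widehat{\mathcal F}$ (composing with an automorphism of $\CP{2}$ sending $\{Z=h(W)\}$ to the line at infinity) so that $\hat\delta=-h\in\mathcal O(0)$; the identities \eqref{formulas} then show that $\hat\alpha$ satisfies a \emph{linear} second-order Fuchsian ODE, whence $\hat\alpha\in\mathcal M(0)$ by the classical Fuchs theorem, and the remaining functions follow from the elementary fact that $Wu'/u\in\mathcal O(0)$ implies $u\in\mathcal M(0)$. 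If you want to salvage your route, you would need to supply exactly this kind of linearizing structure; without it, the moderate-growth claim is the whole difficulty and remains unproved.
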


\begin{proof} We choose $l:=\mbox{ord}_0 F-m+1\geq 0$ as in the definition
of the ODE $\mathcal E^r(M)$, and reduce the ODE $\mathcal E(M)$
to the ODE $\mathcal E^r(M)$ by the change of variables
$Z=zw^l,\,W=w$. Using Theorem~3.4, we represent all solutions of
the equation $\mathcal E(M)$ in the form \eqref{hp} with
single-valued $\alpha_0(w),...,\beta_2(w)$. We introduce a locally
biholomorphic mapping $\widehat{\mathcal
F}:\,\CC{1}\times\Delta_\epsilon^*\longrightarrow\CP{2}$ given by
$$
(Z,W)\longrightarrow(\hat\alpha_0(W)Z+\hat\beta_0(W),\hat\alpha_1(W)Z+
\hat\beta_1(W),\hat\alpha_2(W)Z+\hat\beta_2(W)),
$$
where the single-valued functions $\hat \alpha_j,\hat\beta_j$ are
defined as $\hat
\alpha_j:=\frac{1}{w^l}\alpha_j,\,\hat\beta_j:=\frac{1}{w^l}\beta_j$.
According to Theorem 3.4, it is sufficient to prove that the
collection of functions $\alpha_j,\beta_j$ can be scaled to belong
to the class $M(0)$. Obviously, it is sufficient to prove the same
fact for the collection $\hat \alpha_j,\hat\beta_j$.

Since $Z=h(W)$ is a solution of the ODE $\mathcal E^r(M)$, the
mapping $\widehat{\mathcal F}$ sends $\{Z=h(W)\}$ into some
projective hyperplane in $\CP{2}$. We then compose
$\widehat{\mathcal F}$ with an element of $\mbox{Aut}(\CP{2})$ in
such a way that $\{Z=h(W)\}$ is mapped into
$\CP{2}\setminus\CC{2}$. Using the representation of type
\eqref{fgaalpha} for the mapping $\widehat{\mathcal F}$ with
appropriate functions $\hat\alpha(W),\hat a(W),\hat \beta(W),\hat
b(W),\hat\delta(W)$, we conclude that $\hat\delta(W)=-h(W)\in
\mathcal O(0)$. Arguments similar to those in the proof of Theorem
3.3 show that the fact that $\widehat{\mathcal F}$ transforms the
ODE $\mathcal E$ into $(Z^*)''=0$ yields formulas identical to
\eqref{i2},\eqref{i3} in terms of $\hat\alpha(W),\hat a(W),\hat
\beta(W),\hat b(W),\hat\delta(W)$. Set $\hat k(W):=\frac{\hat
a(W)}{\hat\alpha(W)}$. Then
\begin{equation}\label{formulas}
\hat b'=\hat k\hat
\beta',\,\frac{\hat\beta'}{\hat\alpha}=\frac{\hat A(W)}{3w},\,\hat
a'\hat\alpha-\hat\alpha'\hat a=\hat k'\hat\alpha^2,\,\frac{(\hat
k'\hat\alpha^2)'}{\hat k'\hat\alpha^2}=\frac{\hat
A(W)}{W}\hat\delta-\frac{\hat B(W)}{W}.
\end{equation}
Formulas~\eqref{formulas} show that if $\hat\alpha\in M(0)$, then
$\hat\beta,\hat k,\hat a,\hat b\in M(0)$. The reason is that if a
meromorphic in a punctured disc $\Delta^*_\epsilon (0)$ function
$u(W)$ satisfies $\frac{Wu'}{u}\in \mathcal O(0)$, then $u\in
M(0)$.

To verify the fact $\hat\alpha\in M(0)$, we continue a detailed
expansion of \eqref{i3}, using \eqref{formulas}, in terms of
$\hat\alpha,\hat k$. Then a computation shows that $$-\frac{\hat
E(W)}{W^2}=\left(\frac{\hat B(W)}{W}-\frac{\hat
A(W)}{W}\hat\delta\right)\frac{\hat\alpha'}{\hat\alpha}+\frac{\hat\alpha''}{\hat\alpha}-\hat\delta'\frac{\hat
A(W)}{W}-2\delta\frac{\hat D(W)}{W^2}-3\delta\frac{\hat
C(W)}{W^2}.$$ The obtained equality can be considered as a second
order Fuchsian ODE with the unknown function $\hat\alpha(W)$. By
the Fuchs theorem  we conclude that $\hat\alpha(W)\in M(0)$, which
proves $\hat\alpha,\hat\beta,\hat a,\hat b,\hat\delta\in M(0)$.
Hence, the collection $\hat\alpha_j,\hat\beta_j$ can be scaled to
become holomorphic at $W=0$, as required.
\end{proof}

\section{Existence of a holomorphic solution}

By the results of the previous section, in order to prove
Theorem~2 we need to show that equation $\mathcal E^r(M)$
associated with an $m$-nonminimal Fuchsian type hypersurface
$M\in\mathcal P$ admits a holomorphic at the origin solution
$z=h(w)$, provided its solutions are single-valued. In this
section we prove a more general fact (Theorem 3.5), stating that
\it any \rm ODE similar to $\mathcal E^r(M)$ must have at least
one holomorphic at the origin solution, provided that no solution
can branch about the origin.

Section 6.1 shows that if the ODE $\mathcal E^r(M)$ associated
with a Fuchsian type hypersurface $M\in\mathcal P$ is such  that
the associated mapping $\mathcal F$ is single-valued, then it
satisfies the conditions of Theorem~3.5. To see this it is enough
to choose $z_0$ as a root of the polynomial $\hat C(0)t^3+\hat
D(0)t^2+\hat E(0)t+\hat F(0)$). Hence, Theorem~3.5  implies
Theorems~3.6 and~3.

The idea of the proof of Theorem~3.5 is as follows: The result is
trivial if the function $Q(z,w)$ is independent of $w$ because we
may simply take  $z(w):=z_0$ as a holomorphic solution. For the
general case we apply the Poincar\'e Small Parameter Method.
Further, thanks to the convergence result in \cite{gromak} (see
Theorem~A.12 there), in order to prove Theorem~3.5 it is
sufficient to prove the existence of a {\it formal} holomorphic
solution for the equation $\mathcal E$, as any such solution is
automatically convergent, without any assumption on the
eigenvalues of the linearization matrix. We note that the
convergence result can be also proved  using the standard
technique of majorizing functions, but we do not provide the proof
here. By a {\it formal holomorphic solution} for the equation
$\mathcal E$ we mean a formal power series
$z(w)=\sum\limits_{r=0}^\infty a_rw^r$, that makes $\mathcal E$ an
identity of two Laurent series in $w$ (with finite principal
parts).

After a simple substitution $z\lr z-z_0$ we may  assume $z_0=0$.
Thus, for the proof of Theorem~3.5 it remains to prove the
following

\begin{thm}\label{p.6.3}
In the assumptions of Theorem~3.5 with $z_0=0$, the equation
$\mathcal E$ admits a formal solution
$z(w)=\sum\limits_{r=1}^\infty a_rw^r$.
\end{thm}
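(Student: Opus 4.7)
The plan is to substitute the formal ansatz $z(w) = \sum_{r \geq 1} a_r w^r$ into $w^2 z'' = w P(z,w) z' + Q(z,w)$ and analyze the resulting recurrence. Writing $p_{kl}, q_{kl}$ for the Taylor coefficients of $P, Q$ (with $q_{00} = 0$ because $Q(0,0) = 0$) and matching coefficients of $w^r$ on both sides yields equations of the form
\begin{equation*}
\chi(r) \, a_r = F_r(a_1, \ldots, a_{r-1}), \qquad r \geq 1,
\end{equation*}
where $\chi(r) = r^2 - (1 + p_{00}) r - q_{10}$ is the indicial polynomial arising from the linearization at $(z,u,w) = 0$ of the equivalent Briot--Bouquet system $wz' = u$, $wu' = (1 + P(z,w))u + Q(z,w)$, and $F_r$ is a polynomial in $a_1, \ldots, a_{r-1}$ whose coefficients are determined by the Taylor coefficients of $P$ and $Q$ of order at most $r$. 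Once the formal series is produced, its convergence is automatic by the Briot--Bouquet convergence theorem cited from Gromak; so the entire difficulty lies in the formal construction.

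The nonresonant case, where $\chi(r) \neq 0$ for every positive integer $r$, is immediate: the recurrence determines the coefficients $a_r$ uniquely, and we are done. The substantive case is resonant, namely $\chi(n) = 0$ for some minimal positive integer $n$ (equivalently, one of the eigenvalues of the linearization is a positive integer). In that case $a_1, \ldots, a_{n-1}$ are still uniquely determined, but at step $n$ the equation degenerates to a compatibility condition $F_n(a_1, \ldots, a_{n-1}) = 0$. If this compatibility holds, then $a_n$ is free, and iterating the same analysis at any later resonance produces a formal solution; so the proof reduces to showing that this compatibility is forced by the no-monodromy hypothesis.

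The main obstacle, as anticipated in the outline at the end of Section~3, is this compatibility step, and I would approach it via the Poincar\'e small parameter method. Introduce a one-parameter analytic deformation $\mathcal E_t$ of $\mathcal E$ (perturbing either the coefficient $q_{10}$ of $Q$ or the coefficient $p_{00}$ of $P$ by $t$) chosen so that the deformed indicial polynomial $\chi_t$ satisfies $\chi_t(n) \neq 0$ for $0 < |t| < t_0$, while $\chi_0 = \chi$ and $\mathcal E_0 = \mathcal E$. By the nonresonant case, each $\mathcal E_t$ with $t \neq 0$ admits a unique holomorphic solution $z_t(w) = \sum a_r(t) w^r$ in some common disc $\{|w| < \epsilon_1\}$, whose $n$-th coefficient equals $F_{n,t}(a_1(t), \ldots, a_{n-1}(t)) / \chi_t(n)$. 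If $F_n \neq 0$ at $t = 0$, this coefficient blows up like $1/\chi_t(n)$ as $t \to 0$, and a standard Poincar\'e--Dulac normal form argument shows that the limiting formal expansion of $z_t$ must absorb this divergence into a logarithmic term $c\,w^n \log w$ with $c \neq 0$. Using continuous dependence of solutions on parameters at regular points $w \neq 0$, this logarithmic behavior transfers to an honest (multi-valued) solution of $\mathcal E_0$ defined on a sub-arc of some annulus $\{\epsilon' < |w| < \epsilon''\}$, which then acquires nontrivial monodromy when continued around $w=0$. This contradicts the standing hypothesis of Theorem~3.5 that no local solution admits multi-valued extension to any such annulus, so $F_n = 0$, and the formal construction continues to completion.
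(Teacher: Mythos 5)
Your setup --- reduction to the recurrence with indicial polynomial $\chi(r)=\det(rI-L)$, the immediate nonresonant case, and the reduction of the resonant case to a compatibility condition that must be forced by the no-monodromy hypothesis --- matches the paper's proof exactly. The divergence, and the gap, is in how you force that compatibility.

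The paper perturbs by the \emph{scaling} $w=\varepsilon w^{*}$, which does not change the equation but only conjugates it; consequently a multivalued solution of the perturbed system on an annulus around the unit circle \emph{is} a multivalued solution of the original $\mathcal E$ on the annulus $\{\varepsilon r_1<|w|<\varepsilon r_2\}$, directly contradicting the hypothesis of Theorem~3.5. The Poincar\'e expansion in $\varepsilon$ then produces an honest analytic solution along the unit circle whose first nontrivial term $z_r(w)\varepsilon^{r}$ contains a genuine $w^{k}\ln w$ contribution precisely when the compatibility $K_r=0$ fails, and uniform convergence of the $\varepsilon$-series transfers the multivaluedness to the full solution. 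Your perturbation instead changes the coefficients $p_{00}$ or $q_{10}$, hence changes the ODE itself. The no-monodromy hypothesis applies only to $\mathcal E_0$, so nothing is known about the deformed equations $\mathcal E_t$, and the entire burden falls on the limit $t\to 0$. Two steps there do not close. First, the claim that the holomorphic solutions $z_t$ live on a common disc $\{|w|<\epsilon_1\}$ is unjustified and in general false: the radius of convergence in the Briot--Bouquet existence theorem degenerates as the eigenvalue approaches the resonant integer, which is exactly the regime $\chi_t(n)\to 0$ you are in; indeed you yourself note that $a_n(t)\sim F_n/\chi_t(n)\to\infty$, so the family cannot converge on any fixed disc (nor even at a fixed regular point $w_0\neq 0$ without renormalization, and a renormalized limit solves the linearized homogeneous equation, not $\mathcal E_0$). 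Second, the assertion that the divergence is ``absorbed into a logarithmic term $c\,w^{n}\log w$'' of a solution of $\mathcal E_0$ is the whole content of the resonant case and is not supplied by citing Poincar\'e--Dulac: the normal-form theory gives a formal conjugacy with resonant monomials, not an actual analytic, multivalued solution of $\mathcal E_0$ on an annulus $\{\epsilon'<|w|<\epsilon''\}$, which is what the hypothesis of Theorem~3.5 requires in order to reach a contradiction. To repair the argument you would essentially have to construct that solution, which is what the paper's scaling deformation plus the Euler-system variation-of-constants computation (and its Lemma~7.4 on the eigenvalues of $L$ being two distinct integers) accomplishes.
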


\begin{proof}

We represent equation $\mathcal E$ as a system by introducing a
new unknown function
$$u(w):=wz'(w).$$ Then we have $z'=\frac{u}{w}$ and
$z''=\frac{u'}{w}-\frac{u}{w^2}$, so that $\mathcal E$ becomes the
system
\begin{equation}\label{mainsystem}
\begin{cases}
z'=\frac{u}{w} ,\\
u'=\frac{1}{w}\left[(1+P(z,w))u+Q(z,w)\right].
\end{cases}
\end{equation}

Recall that we assume $z_0=0$, so that $Q(0,0)=0$. Clearly, the
existence of the desired solution is equivalent to the existence
of a formal holomorphic solution $z(w)=\sum\limits_{r=1}^\infty
a_rw^r,\,u(w)=\sum\limits_{r=1}^\infty b_rw^r$ for the
system~\eqref{mainsystem}. We expand the functions $1+P(z,w)$ and
$Q(z,w)$ as $1+P(z,w)= \sum\limits_{k,j\geq
0}p_{kj}z^kw^j,\,Q(z,w)=q_{10}z+q_{01}w+
\sum\limits_{k,j>0}q_{kj}z^kw^j$. Plugging all the power series
representations into~\eqref{mainsystem} and gathering terms with
$w^{r-1},\,r\geq 1$, we obtain

\begin{eqnarray}\label{gather1}
&  a_1-b_1=0 , \\
& \notag b_1-p_{00}b_1-q_{10}a_1=q_{01} ,
\end{eqnarray}
for $r=1$, and
\begin{eqnarray}\label{gatherr}
& \notag ra_r-b_r=0 \\
& rb_r-p_{00}b_r-q_{10}a_r=\sum\limits_{2\leq k+j\leq
r}q_{kj}\sum\limits_{i_1+...+i_k=r-j}a_{i_1}\cdot...\cdot
a_{i_k}+\\
& \notag +\sum\limits_{l=1}^{r-1}b_l\sum\limits_{1\leq k+j\leq
r-l}p_{kj}\sum\limits_{i_1+...+i_k=r-j-l}a_{i_1}\cdot...\cdot
a_{i_k} ,
\end{eqnarray}
for $r>1$. It is presumed in  \eqref{gatherr} that a sum of the
form $\sum a_{i_1}\cdot...\cdot a_{i_k}$ equals $1$ for $k=0$.  It
is also \it important \rm that for a fixed $r$ on the left-hand
side, the right-hand side in both \eqref{gather1} and
\eqref{gatherr} contains only $a_i,b_l$ with $i,l<r$.

Now let us introduce some vector and matrix notation. We denote by
$h_r\in\CC{2}$ the vector with components $a_r,b_r$, and by $L$
the $2\times 2$ matrix $\begin{pmatrix} 0 & 1 \\ q_{10} & p_{00}
\end{pmatrix}$.
 Then,
if $I$ denotes the identity matrix, the equations
\eqref{gather1},\eqref{gatherr} can be rewritten for all $r\geq 1$
as:

\begin{equation}\label{matrixequation}
(rI-L)h_r=\begin{pmatrix} 0  \\
K_r
\end{pmatrix},
\end{equation}
where  $K_1=q_{01}$,  and for $r\geq 2$,
$$
K_r(a_1,...,a_{r-1},b_1,...,b_{r-1},\{p_{kj}\}_{1\leq k+j\leq
r-1},\{q_{kj}\}_{2\leq k+j\leq r})
$$
is a polynomial scalar expression from the right-hand side of
\eqref{gatherr}. It is crucial that all polynomials $K_r$ have
nonnegative coefficients. We now consider two cases.

\medskip

{\bf Nonresonant case.} We assume that $L$ does not have any
eigenvalues $r\in\mathbb{Z}^+$. In this case each of the
equations~\eqref{matrixequation} has a unique solution $h_r$, if
$h_1,...,h_{r-1}$ are already found, and this determines the
collection $\{h_r\}_{r\geq 1}$ uniquely. We then put
\begin{equation}\label{e.formals}
\begin{pmatrix} z^*  \\ u^* \end{pmatrix}(w):=
\sum\limits_{r=1}^\infty h_r w^r,
\end{equation}
and $(z^*(w),u^*(w))$ becomes a formal holomorphic solution of the
equation the system~\eqref{mainsystem} by construction. This
proves the theorem in the nonresonant case.

\medskip

{\bf Resonant case.} This case turns out to be much more delicate
and requires additional considerations. We will prove the
existence of a collection $\{h_r\}_{r\geq 1}$,
satisfying~\eqref{matrixequation}, which will imply the existence
of a formal holomorphic solution~\eqref{e.formals}.  Our main
strategy is to show that the absence of a solution for the system
of equations~\eqref{matrixequation} leads to multiple-valuedness
of certain solutions of $\mathcal E$, which contradicts the
assumption of Theorem 3.5. In order to do that, we consider the
case of a general equation $\mathcal E$ as a perturbation of the
above "constant coefficient" case $Q=Q(z)$, by introducing a small
parameter $\varepsilon$. Perform in the system~\eqref{mainsystem}
the change of variables $w=\varepsilon
w^*,\,z=z^*,\,0<|\varepsilon|<1,\,\varepsilon\in\CC{}$. In the new
coordinates the system becomes
\begin{equation}\label{epsilonsystem}
\mathcal S_\varepsilon = \begin{cases}
z'=\frac{u}{w},\\
u'=\frac{1}{w}\left[(1+P(z,\varepsilon w))u+Q(z,\varepsilon
w)\right].
\end{cases}\end{equation}
Although for the change of variables we have $\varepsilon\neq 0$,
we may extend~\eqref{epsilonsystem} holomorphically to
$\{|\varepsilon|<1\}$. Thus we get a holomorphic in the unit disc
family $\mathcal S_\varepsilon$ of first-order systems. Each
$\mathcal S_\varepsilon$ is a holomorphic perturbation of the
system $\mathcal S_0$, that has the holomorphic solution
$z=0,u=0$. So the strategy now is to find {\it analytic} solutions
of $\mathcal S_\varepsilon$ in annuli $\{r_1<|w|<r_2\},
\,0<r_1<r_2<\epsilon$ for sufficiently small $\varepsilon$ as
perturbations of the constant solution for $\mathcal E_0$. This
general approach is known as the {\it Small Parameter Method}. It
was invented by H.~Poincar\'e to investigate solutions of
nonlinear systems considering them as perturbations of already
known solutions of initial "simple" systems. In the modern
language, the method simply uses the analytic dependence of
solutions of a system of first-order holomorphic ODEs on the
initial conditions and holomorphic parameters, see
\cite{ilyashenko}. We give below a convenient formulation of this

\begin{thm}[Poincar\'e, 1892, see, e.g., \cite{golubev}.]
Let $F(x,y,\varepsilon)$, $x\in\CC{}$, $y\in\CC{2}$,
$\varepsilon\in\CC{}$, be a holomorphic function in the domain
$D\times G\times E$, $x_0\in D$ is a fixed point and
$\gamma(t),0\leq t\leq 1$, is a smooth real-analytic path with
$\gamma(t)\subset D$ and $\gamma(0)=x_0$. Suppose that $0\in E$
and the ODE system $y'=F(x,y,0)$ has a holomorphic solution
$y_0(x)$ in a neighborhood $U$ of $[\gamma(t)]$ with
$y_0(x_0)=p_0$. Then for any sequence $p_r\in \CC{2}$, $r\geq 1$,
such that the power series $\sum p_r\varepsilon^r$ is convergent
in some disc, and any sufficiently small $\varepsilon$, the ODE
system $y'=F(x,y,\varepsilon)$ has a holomorphic w.r.t. the time
$t$ on $\gamma$ solution of the form
\begin{equation}\label{e.6.5}
y^\varepsilon(\gamma(t))=\sum\limits_{r=0}^\infty
y_r(t)\varepsilon^r,
\end{equation}
where $y_r(t)$, $r\geq 1$, are analytic on $[0,1]$, with
$y_r(0)=p_r,\,r\geq 0$, and the series~\eqref{e.6.5} is uniformly
convergent w.r.t. $t$ and $\varepsilon$. Each of the $y_r(t)$
extends to an open neighbourhood $\tilde U$ of $[\gamma]$ as a
(possibly multiple-valued) analytic function $y_r(x)$ such that
$y^\varepsilon(x)=\sum\limits_{r=0}^\infty y_r(x)\varepsilon^r$ is
a (possibly multiple-valued) solution of $y'=F(x,y,\varepsilon)$.
Moreover, each $y_r(x)$, $r\geq 1$, is a solution of some
first-order inhomogeneous linear system of ODEs with homogeneous
part independent of $r$.
\end{thm}

We proceed now with Poincar\'e's Small Parameter Method. We
suppose, without loss of generality, $\epsilon>1$ (where
$\{0<|w|<\epsilon\}$ is the punctured disc where $\mathcal E$ is
defined) and let $\gamma$ be the unit circle and $w_0=1\in\gamma$
be the starting point in Poincar\'e's theorem. We expand
$$
z^\varepsilon (w)=\sum\limits_{r=1}^\infty z_r(w)\varepsilon^r,\ \
u^\varepsilon (w)=\sum\limits_{r=1}^\infty u_r(w)\varepsilon^r.
$$
We now substitute the expansions for $z^\varepsilon (w)$,
$1+P(z,w)$, and $Q(z,w)$ into~\eqref{epsilonsystem} and collect
terms with $\varepsilon^r$, $\,r\geq 1$. For $r=1$ we obtain the
following inhomogeneous first-order linear ODE system in
$z_1,u_1$:
\begin{equation*}\begin{cases}
z_1'=\frac{u_1}{w} ,\\
u'_1=\frac{1}{w}(p_{00}u_1+q_{10}z_1)+q_{01},
\end{cases}\end{equation*}
which can be rewritten as
\begin{equation}\label{e.6.6}
\begin{pmatrix} z_1'  \\ u_1'\end{pmatrix}=\frac{1}{w}L \begin{pmatrix} z_1  \\ u_1\end{pmatrix}+
\begin{pmatrix} 0  \\ K_1
\end{pmatrix},
\end{equation} where $L,K_1$ are as in~\eqref{matrixequation}.

\begin{dfn}
By a \it logarithmic quasipolynomial \rm we mean a (possibly
multiple-valued) analytic in $\CC{}\setminus\{0\}$ function
$P(w^{\lambda_1},...,w^{\lambda_s},\ln w)$, where
$s\in\mathbb{Z}_{\geq 0}$, $P$ is a complex polynomial in $s+1$
variables, and $\lambda_j\in\CC{}$.
\end{dfn}

We need now the following

\begin{lem}\label{l.6.4}
The eigenvalues of $L$ are two distinct integers.
\end{lem}

\begin{proof} Consider~\eqref{e.6.6} as a inhomogeneous Euler system (see \cite{ilyashenko}). The characteristic
roots of this system are the eigenvalues of $L$. Let
$\varphi(w),\psi(w)$ be two vector-functions, forming a basis of
the space of solutions for the homogeneous part of~\eqref{e.6.6}.
Suppose that the eigenvalues of $L$ coincide, or at least one of
them is not an integer. Then at least one of the two non-zero
vector-functions $\varphi(w),\psi(w)$ (say, $\varphi(w)$) contains
either a factor $w^\lambda$, $\lambda\notin\mathbb{Z}$, or a
factor $w^\lambda\ln w$, $\lambda\in\CC{}$, and hence is not
single-valued along $\gamma$. The general solution
of~\eqref{e.6.6} has the form:
\begin{equation}\label{e.6.7}
\begin{pmatrix} z_1\\ u_1\end{pmatrix}=c_1\varphi+\tilde c_1\psi+\theta_1,
\end{equation}
where $c_1,\tilde c_1$ are constants and $\theta_1$ is a
vector-function with components being logarithmic quasipolynomials
(the latter fact follows from the variation of constants
algorithm, applied to the Euler system, see \cite{ilyashenko}). We
may assume, without loss of generality, $\psi(1)\neq 0$ (otherwise
we replace $\gamma$ with a circle $\{|w|=R\}$ with $0<R<1$ and
$\psi(R)\neq 0$, and take $w_0=R$ as
a starting point). Choose in~\eqref{e.6.7} any $c_1,\tilde c_1$ with $c_1\neq 0$ and $\begin{pmatrix} z_1\\
u_1\end{pmatrix}(1)=0$. This fixes the term $\begin{pmatrix} z_1\\
u_1\end{pmatrix}\varepsilon$ in the expansion~\eqref{e.6.5} of the
solution.

We continue with the iteration process and collect terms with
$\varepsilon^r,\,r\geq 2$. We obtain the following series of
inhomogeneous Euler systems (with the homogeneous part identical
to that in~\eqref{e.6.6} for arbitrary $r\geq 2$):
\begin{equation}
\begin{pmatrix} z_r'  \\
u_r'\end{pmatrix}=\frac{1}{w}L\begin{pmatrix} z_r  \\
u_r\end{pmatrix}+M_r,
\end{equation}
where the components of the vector-function $M_r$ are logarithmic
quasipolynomials, depending on $M_j$ with $j<r$ (this again
follows by induction from the variation of constants algorithm).
The general solution has the form
\begin{equation}\label{e.6.9}
\begin{pmatrix} z_r\\ u_r\end{pmatrix}=c_r\varphi+\tilde c_r\psi+\theta_r,
\end{equation}
where the components of the vector-function $\theta_r$ are again
logarithmic quasipolynomials. We choose in~\eqref{e.6.9} any
$c_r,\tilde c_r$ with $\begin{pmatrix} z_r  \\
u_r\end{pmatrix}(1)=0$. Then, applying Poincar\'e's theorem, for
sufficiently small $\varepsilon$ we obtain a (possibly
multiple-valued) analytic in an open neighborhood of $\gamma$
solution of the system $\mathcal S_\varepsilon$, given by
$z(w:)=\sum\limits_{r=1}^\infty
z_r(w)\varepsilon^r,\,u(w):=\sum\limits_{r=1}^\infty
u_r(w)\varepsilon^r$. The uniform convergence in Poincar\'e's
theorem implies that this solution is not single-valued along
$\gamma$, because the first term in its expansion $\begin{pmatrix}
z_1\\ u_1\end{pmatrix}\varepsilon$ is not single-valued along
$\gamma$. As the system~\eqref{epsilonsystem} is obtained
from~\eqref{mainsystem} by scaling of the independent variable
$w$, we conclude that there exists a nonsingle-valued solution
for~\eqref{mainsystem} in some annulus. We get a contradiction
with the assumptions of Theorem~3.5, which proves the lemma.
\end{proof}

\noindent{\bf End of the proof of Theorem 7.1.} Let $k_1\geq 1$ be
the smallest positive eigenvalue of the matrix $L$ (which exists
by the assumption), and $k_2\neq k_1$ be the second eigenvalue
(not necessarily positive).

Suppose first $k_1=1$. Then we claim that $K_1=0$ in
\eqref{e.6.6}, and one can put $\begin{pmatrix} z_1\\
u_1\end{pmatrix}=0$. Indeed, the system \eqref{e.6.6} implies the
scalar inhomogeneous Euler equation
\begin{equation}\label{scalarEuler}z_1''=\frac{p_{00}+1}{w}z'_1+\frac{q_{10}}{w^2}z_1+\frac{q_{01}}{w},\end{equation}
for which the basic solutions of the homogeneous equation are some
single-valued rational functions of the form $\mbox{const}\cdot w$
and $\mbox{const}\cdot w^{k_2}$. Then it is straightforward to
check that the variation of constants gives a partial solution
containing two terms of the form $\mbox{const}\cdot w$ and
$\mbox{const}\cdot w \ln w$, and that the second term is non-zero
(and hence not single-valued) iff $K_1\neq 0$. Proceeding now as
in the proof of Lemma~\ref{l.6.4}, we see that the possibility
$K_1\neq 0$ contradicts the assumptions of Theorem~3.5, and so
$K_1$ must vanish. Hence, for $r=1$ in~\eqref{matrixequation} one
can simply put  $h_1:=0$.

If $k_2$ is not positive, we may repeat the proof of the
proposition in the nonresonant case, as there are no more
obstructions to solve equations~\eqref{matrixequation}. If $k_2$
is a positive integer, we return to Poincar\'e's Small Parameter
method and analyze it simultaneously with
system~\eqref{matrixequation}. As $K_1=0$, we put $\begin{pmatrix}
z_1
\\ u_1\end{pmatrix}=0$  and $h_1=0$
in~\eqref{matrixequation}. Then, using the expansions for
$z^\varepsilon (w)$, $u^\varepsilon (w)$, $P(z,\varepsilon
w),Q(z,\varepsilon w)$ and collecting terms with $\varepsilon^{r}$
for $r=2$ in~\eqref{matrixequation}, we have
\begin{equation}\label{e.6.10}
\begin{pmatrix} z_2'  \\ u_2'\end{pmatrix}=\frac{1}{w}L\begin{pmatrix} z_2  \\
u_2\end{pmatrix}+\begin{pmatrix} 0  \\ K_2 \end{pmatrix}\cdot w,
\end{equation}
where $L,K_2$ are as in~\eqref{matrixequation} (more precisely we
substitute the values $a_1=0$ and $b_1=0$, found in the previous
step, into $K_2$). We consider~\eqref{e.6.10}, again, as an
inhomogeneous Euler equation. The basic solutions are
$\mbox{const}\cdot w$ and $\mbox{const}\cdot w^{k_2}$. If
$r=k_2=2$ is the resonant integer, we apply the variation of
constants and conclude, in the same way as for the resonant value
$r=k_1=1$, that $K_2\neq 0$ contradicts the assumptions of
Theorem~3. We may then put $h_2:=0$ in~\eqref{matrixequation} and
the rest of the proof repeats that of the proposition in the
nonresonant case, as no more resonant integers can exist. If,
otherwise, $k_2>2$ and hence $r=2$ is not a resonant integer, one
can check that the variation of constants
gives a partial solution of the form $\begin{pmatrix} z_2  \\
u_2\end{pmatrix}=h_2w^2$, where $h_2$ is a constant vector. It is
easy to see that the fact that $h_2w^2$ is a solution
of~\eqref{e.6.10} implies that $h_2$ is a (unique!) solution
of~\eqref{matrixequation}. It is then straightforward to check
that, proceeding further with the small parameter method and
gathering terms with $\varepsilon^3$, one has, in the same spirit
as before,
\begin{equation}
\begin{pmatrix} z_3'  \\ u_3'\end{pmatrix}=\frac{1}{w}L\begin{pmatrix} z_3  \\
u_3\end{pmatrix}+\begin{pmatrix} 0  \\ K_3 \end{pmatrix}\cdot w^2,
\end{equation}
where $L,K_3$ are as in~\eqref{matrixequation} (more precisely,
one has to substitute the values $a_1,b_1,a_2,b_2$, found on the
previous steps, into $K_3$). The latter follows from the fact that
the second term $h_2w^2\varepsilon^2$ in the small parameter
expansion agrees with the solution $h_2$ of \eqref{matrixequation}
for $r=2$. In the same way as before, we conclude now that if
$k_2=3$, then $K_3=0$ and we set $h_3=0$ in
\eqref{matrixequation}, in order to avoid a contradiction with the
assumptions of Theorem~3.5. We then repeat the proof as in the
nonresonant case. Otherwise, we again obtain a partial solution
$\begin{pmatrix} z_3  \\ u_3\end{pmatrix}=h_3w^3$, where $h_3$ is
a constant vector, satisfying~\eqref{matrixequation} for $r=3$.

We continue with the similar arguments until we reach the step
$r=k_2$, to get $K_{k_2}=0$, $h_{k_2}=0$ in~\eqref{matrixequation}
and then repeat the proof as in the nonresonant case. This
completes the case $k_1=1$. The proof in the case $k_1>1$ uses the
same arguments as above and is completely analogous.
\end{proof}

Thus Theorem 3.5 is finally proved. Theorem 3.5 and Proposition
6.10 now imply Theorem~3.6.

\section{Analytic continuation and infinitesimal automorphisms}

It was explained in Section 2 that the monodromy of a mapping,
associated with a nonminimal pseudospherical hypersurface, is
given by some $\sigma \in {\rm Aut}(\CP{n})$. This allows us to
obtain in this section a useful representation of the
infinitesimal automorphism algebra $\mathfrak{hol}(M,p)$ for $p\in
X$ of a nonminimal pseudospherical hypersurface. Combining this
representation with Theorem~3.4, we will prove in the next section
the  Dimension Conjecture.

\begin{proof}[Proof of Theorem~3.7]
Fix a collection $\{p,U,\mathcal F_0,\mathcal F,\mathcal{Q}\}$,
where $p\in M$ is a Levi-nondegenerate point, $\mathcal
Q\subset\CP{n}$ a nondegenerate hyperquadric, $\mathcal
F_0:\,(\CC{n},p)\longrightarrow (\CP{n},p')$ a biholomorphic
mapping with $\mathcal F_0(M)\subset\mathcal Q$, and $U$ is an
open neighbourhood of the origin such that $\mathcal F_0$ extends
in $U\setminus X$ to a (multiple-valued) locally biholomorphic
mapping $\mathcal F$ into $\CP{n}$ in the sense of Weierstrass. We
denote by $M^+,M^-$ the two sides of $M\setminus X$ and assume,
without loss of generality, that $p\in M^+$. Fix an element $L\in
\mathfrak{hol}\,(M,0)$ and consider the (connected) flow
$\psi_t:\,(\CC{n},0)\longrightarrow
(\CC{n},0),\,t\in\RR{},\,\psi_0=\mbox{Id}$, generated by $\re L$.
Note that any local automorphism $\psi_t$ must preserve the
complex hypersurface $X$, and so we may assume that
$\psi_t(M^+)\subset M^+$.
For $p$ sufficiently close to $0$, we may suppose that $\psi_t$
with sufficiently small $t$ are defined in a neighbourhood of $p$
and consider their push-forwards
$$
\tau_t:=\mathcal F_0\circ\psi\circ \mathcal F_0^{-1}.
$$
Then $\tau_t$ is a flow of local CR-automorphisms of $\mathcal Q$
at $p'=\mathcal F_0(p)$ and, according to~\cite{chern},
$\tau_t\in\mbox{Aut}\,(\mathcal Q)$. It is also shown in
\cite{chern} that $\mbox{Aut}\,(\mathcal Q)$ is a maximally
totally real subgroup of $\mbox{Aut}(\CP{n})$. Note that the
correspondence $\psi_t \to \tau_t$ is injective w.r.t. the flows.
Now let us consider the analytic mappings $\mathcal F^t:=\mathcal
F\circ\psi_t$ in $U_t\setminus X$ for a sufficiently small
polydisc $U_t\subset U$, centred at $0$. It is easy to see from
the definition of $\mathcal F^t$ that its germ at $p$ also maps
$(M,p)$ into $\mathcal Q$, and if $\sigma$ is the monodromy matrix
associated with $\mathcal F$ then $\mathcal F^t$ has the same
monodromy matrix $\sigma$. On the other hand, \eqref{changesigma}
shows that the monodromy of $\mathcal F^t$ is given by the matrix
$\tau_t\circ\sigma\circ\tau_t^{-1}$ with $\tau_t$ being exactly
the push-forward of $\psi_t$. Hence,
$$
\sigma=\tau_t\circ\sigma\circ\tau_t^{-1} .
$$
Therefore, the push-forward of the automorphisms $\tau_t$ belong
to the subgroup $C\subset\mbox{Aut}(\mathcal Q)$ that consists of
elements of $\mbox{Aut}(\mathcal Q)\subset \mbox{Aut}(\CP{n})$,
commuting with the element $\sigma\in\mbox{Aut}(\CP{n})$. The
subgroup $C$ is the intersection of the centralizer $Z(\sigma)$
(see \cite{vinberg}) of the element
$\sigma\in\mbox{Aut}\,(\CP{n})$ with the totally real subgroup
$\mbox{Aut}(\mathcal Q)\subset \mbox{Aut}(\CP{n})$.  Its tangent
algebra is $c=z(\sigma)\cap \mathfrak{hol}\,(\mathcal Q,p')$,
where $z(\sigma)$ is the tangent algebra to $Z(\sigma)$ (we also
call it the \it centralizer of $\sigma$). \rm The above arguments
imply the existence of an injective embedding of
$\mathfrak{hol}\,(M,p)$ into the algebra $c$.
\end{proof}

As an application we obtain
\begin{corol}
Let $M\subset\CC{2}$ be a smooth real-analytic hypersurface,
passing through the origin, and
$\mbox{dim}\,\mathfrak{hol}\,(M,0)\geq 5$. Then either (i) $M$ is
Levi-flat, or (ii) $(M,0)$ is spherical, or (iii) $M$ is
holomorphically equivalent to a hypersurface of class $\mathcal
P_0$ such that its monodromy operator $\sigma$ is the identity (in
other words, the associated mapping $\mathcal F$ is
single-valued).\end{corol}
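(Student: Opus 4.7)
My plan is to combine a case analysis on the Levi geometry of $M$ at~$0$ with the embedding furnished by Theorem~3.7, and then to rule out a nontrivial monodromy by a dimension count inside the totally real subalgebra $\mathfrak{su}(2,1)\subset\mathfrak{hol}\,(\CP{2})$.

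First I would dispose of the easy cases. If $M$ is Levi-flat we are in case~(i). Assume $M$ is Levi nonflat. If $(M,0)$ is spherical we are in case~(ii). So suppose $M$ is Levi nonflat and \emph{not} spherical at $0$. By the classical Chern--Moser bound and its extensions to minimal finite-type germs in $\CC{2}$ (i.e.\ by the minimal case of the strong Dimension Conjecture, already established in the literature cited in Section~3), the hypothesis $\mbox{dim}\,\mathfrak{hol}\,(M,0)\geq 5$ forces $M$ to be \emph{nonminimal} at the origin. The same hypothesis, propagated to generic Levi-nondegenerate points of $M\setminus X$ and combined with the classical upper bound for the automorphism algebra of a Levi-nondegenerate non-spherical germ, forces $M$ to be spherical at every Levi-nondegenerate point of $M\setminus X$, i.e.\ pseudospherical. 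By Theorem~3.1 we may therefore assume, after a biholomorphic change of coordinates, that $M\in\mathcal P_0$.

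Next I would apply Theorem~3.7: letting $\sigma\in\mbox{Aut}(\CP{2})$ be the monodromy operator of $M$, we obtain an injective embedding
\[
\mathfrak{hol}\,(M,0)\hookrightarrow c=z(\sigma)\cap\mathfrak{hol}\,(\mathcal Q),
\]
where $\mathcal Q\subset\CP{2}$ is the target hyperquadric. The lower bound $\mbox{dim}\,\mathfrak{hol}\,(M,0)\geq 5$ gives $\mbox{dim}_{\rl}\,c\geq 5$. The strategy now is to show that $\sigma=\mbox{Id}$. Identify $\mathfrak{hol}\,(\CP{2})\cong\mathfrak{sl}(3,\cx)$ and $\mathfrak{hol}\,(\mathcal Q)\cong\mathfrak{su}(2,1)$, the latter being a totally real form of real dimension~$8$ inside $\mathfrak{sl}(3,\cx)$. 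The complex centralizer $z(\sigma)\subset\mathfrak{sl}(3,\cx)$ depends only on the conjugacy class of $\sigma\in\mbox{PGL}(3,\cx)$, and a direct Jordan normal form case analysis shows that whenever $\sigma\neq\mbox{Id}$ one has $\mbox{dim}_{\cx}\,z(\sigma)\leq 4$. A case-by-case computation of the real intersection with $\mathfrak{su}(2,1)$, using a representative of $\sigma$ adapted to the Hermitian signature $(2,1)$, then yields $\mbox{dim}_{\rl}(z(\sigma)\cap\mathfrak{su}(2,1))\leq 4$ for every $\sigma\neq\mbox{Id}$. This contradicts the lower bound $\mbox{dim}_{\rl}\,c\geq 5$, and hence $\sigma=\mbox{Id}$.

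Finally, by Theorem~3.4(iii) the triviality of the monodromy $\sigma=\mbox{Id}$ is equivalent to the associated mapping $\mathcal F$ being single-valued, which places $M$ in case~(iii) and completes the argument. The main obstacle in this plan is the centralizer bookkeeping of the previous paragraph: one has to enumerate the Jordan classes of $\sigma\in\mbox{PGL}(3,\cx)$ (three distinct eigenvalues, two coinciding eigenvalues in the semisimple case, a $2\times 2$ Jordan block, and a full $3\times 3$ Jordan block) and verify case-by-case that the real intersection with the signature $(2,1)$ form is at most $4$-dimensional. The subtle point is that $z(\sigma)\subset\mathfrak{sl}(3,\cx)$ is a \emph{complex} subalgebra whereas its intersection with $\mathfrak{su}(2,1)$ is only a \emph{real} subalgebra, so the real dimension is not automatically twice the complex one and one must exploit the explicit form of the Hermitian structure to carry out the computation.
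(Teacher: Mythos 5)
Your proposal follows essentially the same route as the paper: the identical case analysis reducing to a pseudospherical nonminimal germ in class $\mathcal P_0$, then Theorem~3.7 plus a centralizer dimension count in $\mathfrak{sl}(3,\cx)$ to force $\sigma=\mbox{Id}$. Two small remarks: the paper avoids your case-by-case real computation by simply noting that $\mathfrak{hol}(\mathcal Q)$ is totally real in $\mathfrak{hol}(\CP{2})$, so $\mbox{dim}_{\rl}\bigl(z(\sigma)\cap\mathfrak{hol}(\mathcal Q)\bigr)\leq\mbox{dim}_{\cx}z(\sigma)\leq 4$ for nonscalar $\sigma$; and before invoking Theorem~3.1 one must also rule out the case where the Levi-degeneracy locus of $M$ is strictly larger than $X$ (the paper does this by finding finite-type Levi-degenerate points arbitrarily close to $0$ and applying Kolar's bound $\leq 4$ there).
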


\begin{proof} We consider several cases depending on the Levi form of $M$.

If $M$ is Levi-flat, then
$\mbox{dim}\,\mathfrak{hol}\,(M,0)=\infty$, see \cite{ber}.

If $M$ is Levi nondegenerate at $0$, then the classical results in
\cite{poincare}, \cite{chern} imply that
$\mbox{dim}\,\mathfrak{hol}\,(M,0)\leq 8.$ Further analysis in
\cite{belold} shows $\mbox{dim}\,\mathfrak{aut}\,(M,0)\leq 1$,
unless $(M,0)$ is spherical. Combining this with the
classification of E.~Cartan\,\cite{cartan} of homogeneous
hypersurfaces in $\CC{2}$, we obtain
$\mbox{dim}\,\mathfrak{hol}\,(M,0)\leq 3$, if $M$ is
Levi-nondegenerate and is not spherical at zero.

If $M$ is Levi-degenerate at $0$, but not Levi-flat, the
hypersurface $M$ can either be of finite type at $0$ (see
\cite{ber} for various definitions of type), which is equivalent
to its minimality, or $M$ can be of infinite type, which is
equivalent to its nonminimality. Some generalizations of
Poincar\'e-Chern-Moser arguments provide the estimate
$\mbox{dim}\,\mathfrak{hol}\,(M,0)\leq 4$ in the finite type case
(e.g., \cite{kolar}). Thus we may assume $M$ is nonminimal at $0$.
Let $\Sigma\subset M$ be the set of points where the Levi form is
degenerate. If $\Sigma \ne X$ near the origin, then, since $X$ is
the only complex hypersurface contained in $M$ in a sufficiently
small neighbourhood of the origin, there exist finite type Levi
degenerate points in $M$, arbitrarily close to $0$. Applying the
bounds from \cite{kolar}, we obtain again
$\mbox{dim}\,\mathfrak{hol}\,(M,0)\leq 4$. Thus, we may assume
that $M\setminus X$ is Levi-nondegenerate in a sufficiently small
neighbourhood of the origin. The inequality
$\mbox{dim}\,\mathfrak{hol}\,(M,0)\geq 5$ implies that for a
Levi-nondegenerate point $p\in M\setminus X$ its infinitesimal
automorphism algebra has dimension at least $5$. Applying again
\cite{belold} and \cite{cartan}, we conclude that $M\setminus X$
is spherical and therefore it is biholomorphically equivalent to
some $\tilde M\in\mathcal P_0$. Thus, it remains to consider only
the case when $M\in\mathcal P_0$. Theorem 3.7 gives
\begin{equation}\label{boundz}
\mbox{dim}\,\mathfrak{hol}\,(M,0)\leq
\mbox{dim}_{\CC{}}\,z(\sigma),
\end{equation}
where $\sigma$ is the monodromy operator for $M$ ($\sigma$ can be
interpreted as a $3\times 3$ matrix, defined up to scaling).
Centralizers of elements of $GL(3, \CC{})$ can be easily analyzed,
using the Jordan normal form, and it is not difficult to see that
for all nonscalar matrices the centralizer has dimension at most
$5$. Taking the scaling into account, we have
$\mbox{dim}_{\CC{}}\,z(\sigma)\leq 4$, unless $\sigma=\mbox{Id}$.
\end{proof}

The next result immediately follows from Corollary~8.1.

\begin{corol} Theorem 3.8 implies the Strong
Dimension Conjecture.\end{corol}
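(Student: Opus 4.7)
The plan is to reduce the Strong Dimension Conjecture to Theorem~3.8 through the trichotomy established in Corollary~8.1. Let $(M,0)\subset\CC{2}$ be a real-analytic Levi nonflat germ which is not spherical at $0$; the goal is the bound $\mbox{dim}\,\mathfrak{hol}\,(M,0)\leq 5$. If $\mbox{dim}\,\mathfrak{hol}\,(M,0)\leq 4$ there is nothing to do, so one may assume $\mbox{dim}\,\mathfrak{hol}\,(M,0)\geq 5$, which is precisely the hypothesis that activates Corollary~8.1.

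Corollary~8.1 presents three mutually exclusive alternatives: (i) $M$ is Levi flat, (ii) $(M,0)$ is spherical, or (iii) $M$ is holomorphically equivalent to a hypersurface $\widetilde M\in\mathcal P_0$. Alternative~(i) is immediately excluded by the Levi nonflatness assumption of the Strong Dimension Conjecture, and alternative~(ii) is excluded by the nonsphericity assumption. Thus, only~(iii) can occur, and one obtains a germ of biholomorphism $\Phi:(\CC{2},0)\to(\CC{2},0)$ with $\Phi(M)=\widetilde M$ and $\widetilde M\in\mathcal P_0$.

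Since the infinitesimal automorphism algebra is a biholomorphic invariant of the germ, the pushforward $\Phi_*$ identifies $\mathfrak{hol}\,(M,0)$ with $\mathfrak{hol}\,(\widetilde M,0)$ as Lie algebras, hence $\mbox{dim}\,\mathfrak{hol}\,(M,0)=\mbox{dim}\,\mathfrak{hol}\,(\widetilde M,0)$. Applying Theorem~3.8 to $\widetilde M\in\mathcal P_0$ yields $\mbox{dim}\,\mathfrak{hol}\,(\widetilde M,0)\leq 5$, which delivers the desired bound for $M$.

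There is essentially no obstacle: the entire argument is a clean formal reduction to previously established results. It is worth noting that the extra information provided by alternative~(iii) of Corollary~8.1, namely that the monodromy operator $\sigma$ is trivial, is not used at this stage; that refinement was crucial inside the proof of Theorem~3.8 itself (where Theorem~3.7 handles the nontrivial monodromy case and a separate argument is needed for trivial monodromy), but for the present corollary Theorem~3.8 is invoked only as a black box.
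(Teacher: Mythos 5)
Your proof is correct and follows exactly the paper's route: the paper also derives this corollary directly from the trichotomy of Corollary~8.1, with alternatives (i) and (ii) ruled out by the Levi nonflatness and nonsphericity hypotheses, so that only the $\mathcal P_0$ case remains and Theorem~3.8 gives the bound. Your remark that the triviality of the monodromy in alternative (iii) is not needed here (Theorem~3.8 being applied as a black box) is accurate.
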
\rm

The following proposition gives the answer in the case when
$\mathcal F$ is single-valued and extends to~$X$.

\begin{propos}
Let $M\subset\CC{2}$ be of class $\mathcal P_0$, and $U$ be the
associated neighbourhood. Assume, in addition, that the associated
mapping $\mathcal F$ extends to the complex locus $X$
holomorphically. Then $\mathfrak{hol}\,(M,0)$ can be injectively
embedded into the stability algebra $\mathfrak{aut}\,(S^3,o')$ for
some point $o'\in S^3$. In particular,
$\mbox{dim}\,\mathfrak{hol}\,(M,0)\leq 5$.
\end{propos}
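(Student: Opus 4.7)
The plan is to use the associated mapping $\mathcal F$ (single-valued and holomorphic on all of $U$ by hypothesis) to push forward local holomorphic flows on $M$ to flows on $S^3$, producing an injective Lie algebra homomorphism $\mathfrak{hol}(M,0)\hookrightarrow\mathfrak{aut}(S^3,o')$ for an appropriate point $o'\in S^3$. The bound $\dim\mathfrak{hol}(M,0)\leq 5$ then follows from $\dim\mathfrak{aut}(S^3,o')=5$.

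First I would identify the point $o'$. Since $\mathcal F$ extends holomorphically across $X$ and maps $M\setminus X$ into $S^3$, by continuity $\mathcal F(X)\subset S^3$. The restriction $\mathcal F|_X$ is a holomorphic map from the complex curve $X$ into the strictly pseudoconvex hypersurface $S^3$, which contains no positive-dimensional complex analytic subvarieties. Hence $\mathcal F|_X$ must be constant: $\mathcal F(X)=\{o'\}$ for some $o'\in S^3$. This is the crucial geometric input from the extension hypothesis and is what will ultimately collapse the image flow into the stability algebra.

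Given $L\in\mathfrak{hol}(M,0)$, I would take the local flow $\psi_t$ of $\re L$ on a neighborhood $V\Subset U$ of $0$. Then $\psi_t$ preserves $M\cap V$, and since $X$ is the unique germ of complex hypersurface in $M$ near $0$, it also preserves $X\cap V$. Picking a Levi-nondegenerate point $p\in(M\setminus X)\cap V$ where $\mathcal F$ is locally biholomorphic, I would define the push-forward $\tau_t:=\mathcal F\circ\psi_t\circ\mathcal F^{-1}$ as a germ at $p':=\mathcal F(p)\in S^3$. This is a germ of local CR-automorphism of $S^3$, and by the classical Poincar\'e--Chern--Moser extension theorem it extends uniquely to a global element $\tau_t\in\mathrm{Aut}(S^3)\subset\mathrm{Aut}(\CP{2})$. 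To see that $\tau_t$ fixes $o'$, I would observe that near $p$ the identity $\tau_t\circ\mathcal F=\mathcal F\circ\psi_t$ holds; both sides are holomorphic maps $V\to\CP{2}$, so by the identity principle the relation extends to all of $V$. Evaluating at $0\in X$ and using $\psi_t(0)\in X$ together with $\mathcal F(X)=\{o'\}$ yields $\tau_t(o')=\mathcal F(\psi_t(0))=o'$. Differentiating at $t=0$ then produces an element of $\mathfrak{aut}(S^3,o')$.

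Finally I would verify that the resulting linear map $L\mapsto\frac{d}{dt}\tau_t\bigr|_{t=0}$ is an injective Lie algebra homomorphism. The homomorphism property is the standard functoriality of vector-field push-forward under locally biholomorphic maps. Injectivity follows because $\tau_t=\mathrm{id}$ combined with the identity $\mathcal F\circ\psi_t=\mathcal F$ on $V$ and the local biholomorphicity of $\mathcal F$ on $V\setminus X$ forces $\psi_t=\mathrm{id}$ on $V\setminus X$, and hence on all of $V$ by continuity, so $L=0$. The main technical point is the propagation of the intertwining relation $\tau_t\circ\mathcal F=\mathcal F\circ\psi_t$ from the germ at $p$ to the complex locus; this is precisely where the hypothesis of holomorphic extension of $\mathcal F$ across $X$ is essential, as it makes the right-hand side a single-valued holomorphic map globally on $V$ and permits the identity principle to force $\tau_t(o')=o'$.
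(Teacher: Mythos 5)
Your proposal is correct and follows essentially the same route as the paper: establish $\mathcal F(X)=\{o'\}$ (the paper argues via $\mathcal F(X)$ being a connected, locally countable union of analytic sets inside $S^3$, you argue directly that $S^3$ contains no positive-dimensional analytic sets — same point), then push forward flows via $\mathcal F$, extend to $\mathrm{Aut}(S^3)$ by Chern--Moser, and propagate the intertwining identity $\tau_t\circ\mathcal F=\mathcal F\circ\psi_t$ across $X$ by the identity principle to get $\tau_t(o')=o'$. Your explicit verification of injectivity is a welcome elaboration of a step the paper only asserts.
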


\begin{proof}
First note that $\mathcal F(X)$ is a locally countable union of
locally complex analytic sets\,\cite{chirka}. On the other hand,
$\mathcal F(X)$ is connected and $\mathcal F(X)\subset S^3$, so
that we conclude $\mathcal F(X)=\{o'\}$ for some point $o'\in
S^3$.
Choose now a point $q\in M^+$ ($M^+,M^-$ are the sides of
$M\setminus X$) and a local flow $\psi_t$ of local automorphisms
of $M$ near the origin, $\psi_t(M^+)\subset M^+$. Shrinking $U$ if
necessary, we may suppose that $\psi_t$ is defined in $U$. Arguing
as in the proof of Theorem~3.7, we may consider the push-forward
$\tau_t:=\mathcal F\circ \psi_t \circ \mathcal F^{-1}$ defined in
a neighbourhood of the point $q'=\mathcal F(q)$ (we choose the
element of $\mathcal F^{-1}$ with $\mathcal F^{-1}(q')=q$). Since
$\psi_t(M)\subset M$, we have $\tau_t(S^3)\subset S^3$, so
$\tau_t$ extends to an element of $\mbox{Aut}\,(S^3)\subset
\mbox{Aut}\,(\CP{2})$ (see \cite{chern}). Then for points
$z\in\CC{2}$, close to $q$, we have $\mathcal
F\circ\psi_t(z)=\tau_t\circ \mathcal F(z)$. By uniqueness the
latter equality holds for all $z\in U$. Therefore, $\mathcal
F(\psi_t(0))=\tau_t(\mathcal F(0))$ and, since $0\in X$,
$\psi_t(X)\subset X$, $\mathcal F(X)=\{o'\},$ we conclude that
$\tau_t(o')=o'$, and so $\tau_t$ stabilize the point $o'$.
Applying this to a local flow $\psi_t$, generated by $\re L$ for
some $L\in\mathfrak{hol}\,(M,0)$, we conclude that the flow
$\tau_t:=\mathcal F\circ\psi_t\circ \mathcal F^{-1}$ extends to a
flow $\tau_t\in \mbox{Aut}\,(S^3)$ with $\tau_t(o')=o'$, and then
for the corresponding vector field $L'\in\mathfrak{hol}\,(S^3,q')$
we have $L'(o')=0$. As the correspondence $\psi_t\longrightarrow
\tau_t$ is injective w.r.t. a flow $\psi_t$, the proposition
follows.
\end{proof}

\begin{corol}
Theorem~3.8 holds true for any  hypersurface $M\in\mathcal P_0$,
except, possibly, the case of a hypersurface with a single-valued
associated mapping $\mathcal F$, which does not extend
holomorphically to the complex locus $X$. In particular, the
Strong Dimension Conjecture holds true for any 1-nonminimal at the
origin smooth real-analytic hypersurface
$M\subset\CC{2}$.\end{corol}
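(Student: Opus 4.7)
The strategy is to combine Theorem~3.7 and Proposition~8.3 via a trichotomy on the behaviour of the associated mapping $\mathcal F$. For $M\in\mathcal P_0$, exactly one of the following holds: (A) the monodromy operator $\sigma$ of $\mathcal F$ is nontrivial; (B) $\sigma=\mbox{Id}$ and $\mathcal F$ extends holomorphically to $X$; (C) $\sigma=\mbox{Id}$ and $\mathcal F$ fails to extend to $X$. The plan is to establish the bound $\dim\mathfrak{hol}(M,0)\leq 5$ in cases (A) and (B); case (C) is precisely the one excluded by the statement of the corollary.

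In case (A) I would apply Theorem~3.7, which embeds $\mathfrak{hol}(M,0)$ as a real subalgebra of $c=z(\sigma)\cap\mathfrak{hol}(\mathcal Q)$. The Jordan-form analysis already performed inside the proof of Corollary~8.1 shows that $\dim_{\CC{}}z(\sigma)\leq 4$ whenever $\sigma$ is nonscalar (i.e.\ whenever $\sigma\neq\mbox{Id}$ as an element of $\mbox{Aut}(\CP{2})$). Since $\mathfrak{hol}(\mathcal Q)\cong\mathfrak{su}(2,1)$ is a totally real form of the complex Lie algebra $\mathfrak{hol}(\CP{2})$, the intersection $c$ is totally real in $z(\sigma)$, and hence $\dim_{\RR{}}c\leq\dim_{\CC{}}z(\sigma)\leq 4\leq 5$. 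Case (B) is dispatched by direct application of Proposition~8.3, which embeds $\mathfrak{hol}(M,0)$ into $\mathfrak{aut}(S^3,o')$, a Lie algebra of real dimension exactly~$5$.

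For the second assertion, I take a smooth real-analytic $1$-nonminimal Levi-nonflat germ $(M,0)\subset\CC{2}$ which is \emph{not} spherical at $0$, and may assume $\dim\mathfrak{hol}(M,0)\geq 5$ (otherwise nothing needs to be proved). Corollary~8.1 then excludes its first two alternatives by hypothesis, so $M$ must be biholomorphically equivalent to some $\widetilde M\in\mathcal P_0$ whose associated mapping $\widetilde{\mathcal F}$ has trivial monodromy; by biholomorphic invariance of the nonminimality order, $\widetilde M$ is itself $1$-nonminimal. Now the Fuchsian type condition~\eqref{fuchstype} is vacuous for $m=1$, as noted after Definition~1.1, so $\widetilde M$ is automatically of Fuchsian type. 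Theorem~3.6 then forces $\widetilde{\mathcal F}$ to extend holomorphically to $X$, placing $\widetilde M$ into case~(B), and the first part of the corollary yields $\dim\mathfrak{hol}(\widetilde M,0)\leq 5$.

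I do not anticipate any substantive obstacle: the corollary is essentially an organizational consequence of the previously established machinery. The only mildly technical ingredient is the centralizer dimension bound in case~(A), but this is precisely the linear-algebraic observation already settled inside the proof of Corollary~8.1, so no new computation is needed.
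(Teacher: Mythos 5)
Your proposal is correct and follows essentially the same route the paper intends: the paper states this corollary without a separate proof precisely because it is the immediate combination of the centralizer bound from Theorem~3.7/Corollary~8.1 (nontrivial monodromy gives $\dim_{\CC{}}z(\sigma)\leq 4$), Proposition~8.3 (the extendable case), and Theorem~3.6 with the observation that the Fuchsian condition is vacuous for $m=1$, which rules out the exceptional case for $1$-nonminimal hypersurfaces. Your trichotomy and the totally-real intersection argument match the paper's reasoning, so there is nothing to add.
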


\section{Solution of the Dimension Conjecture}

In this section we complete the proof of the Dimension Conjecture.
In view of  Section~8, it remains to treat the case of an
$m$-nonminimal hypersurface $M\in\mathcal P_0$ with a
single-valued mapping $\mathcal F:\,U\setminus
X\longrightarrow\CP{2}$ associated with $M$, which does not extend
to $\{w=0\}$ holomorphically.

Consider the Lie algebra $\mathfrak g=\mathfrak{hol}(M,0)$ and its
complexification $\mathfrak h=\mathfrak g^{\CC{}}=\mathfrak
g\otimes\CC{}$.  Fix a Levi nondegenerate point $p\in M$, for
which all vector fields $L\in \mathfrak g$ are defined, and for a
vector field $L\in \mathfrak g$ consider, as in the proof of
Theorem 3.7, its push-forward $L^*\in \mathfrak{hol}(\CP{2})$.
Then we obtain a well-defined push-forward $(\mathfrak
g^*,\mathfrak h^*)$ for the pair $(\mathfrak g,\mathfrak h)$. Here
$\mathfrak g^*$ and $\mathfrak h^*$ are a real and a complex Lie
subalgebras of $\mathfrak{hol}(\CP{n})$ respectively, naturally
isomorphic to the algebras $\mathfrak g$ and $\mathfrak h$
respectively. It follows from our construction that the
pulled-back algebra $\mathcal F^{-1}\circ \mathfrak h^*$ coincides
with $\mathfrak h$, in particular, all vector fields from the
well-defined in $U\setminus X$ algebra $\mathcal F^{-1}\circ
\mathfrak h^*$ extend to $X$ holomorphically. We also note that a
projective change of coordinates in $\CP{2}$, given by
$\tau\in\mbox{PGL}(3,\CC{})$, replaces the mapping $\mathcal F$
with the mapping $\tau\circ\mathcal F$. At the same time, $\tau$
conjugates the Lie algebra
$\mathfrak{hol}(\CP{n})\simeq\mathfrak{sl}(3, \CC{})$, and
$\mathfrak h^*$ changes accordingly (see Section 2).

We now need the following statement.

\begin{propos}\label{p.9.1}
Fix an affine chart $V\subset\CP{2}$  with the affine coordinates
$(z^*,w^*)$. Then the algebra $\mathfrak h^*$ cannot contain the
2-dimensional subalgebra, given in $V$ by
\begin{equation}\label{2alg}
\mbox{span}_{\CC{}}\left\{\frac{\partial}{\partial
z^*},\frac{\partial}{\partial w^*}\right\}.
\end{equation}
\end{propos}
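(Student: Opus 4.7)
The plan is to argue by contradiction. Suppose $\mathfrak{t}:=\mbox{span}_{\CC{}}\{\partial/\partial z^*,\partial/\partial w^*\}$ is contained in $\mathfrak{h}^*$. I will show this forces the associated mapping $\mathcal{F}$ to extend holomorphically to the complex locus $X=\{w=0\}$, contradicting our standing assumption.

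I first record the two structural properties of the pulled-back vector fields that drive everything. By the identity $\mathcal{F}^{-1}\circ\mathfrak{h}^*=\mathfrak{h}$ recalled just above the proposition, the pullbacks $L_1:=\mathcal{F}^*(\partial/\partial z^*)$ and $L_2:=\mathcal{F}^*(\partial/\partial w^*)$ lie in $\mathfrak{h}$, and so they extend holomorphically from $U\setminus X$ to all of $U$. Furthermore, since $X$ is the unique complex hypersurface contained in $M$ near the origin, every local automorphism of $M$ preserves $X$; this forces the $\partial/\partial w$-component of every $L\in\mathfrak{g}$ to vanish on $X$, a $\CC{}$-linear condition which passes to the complexification $\mathfrak{h}$. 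Thus $L_1,L_2$ are holomorphic on $U$ and tangent to $X$.

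For the computation I compose $\mathcal{F}$ with an appropriate element of $\mbox{Aut}(\CP{2})$ --- this only conjugates $\mathfrak{h}^*$ together with the chart $V$, and affects neither extensibility of $\mathcal{F}$ to $X$ nor the statement to be proved --- and place $\mathcal{F}$ in the linear-fractional form \eqref{fgaalpha}: $f=\alpha/(z+\delta)+\beta$, $g=a/(z+\delta)+b$. Using the Jacobian $J_{\mathcal{F}}=k'\alpha^2/(z+\delta)^3$ from \eqref{jacobian}, with $k:=a/\alpha$, the standard pullback formulas give $L_i=A_i\partial/\partial z+B_i\partial/\partial w$ with
\begin{equation*}
B_1=-\frac{g_z}{J_{\mathcal{F}}}=\frac{k(z+\delta)}{k'\alpha},\qquad B_2=\frac{f_z}{J_{\mathcal{F}}}=-\frac{z+\delta}{k'\alpha},
\end{equation*}
while $A_1=g_w/J_{\mathcal{F}}$ and $A_2=-f_w/J_{\mathcal{F}}$ are polynomials in $z$ of degree at most three whose coefficients are rational expressions in $\alpha,\beta,a,b,\delta$ and their derivatives.

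The central step, which is also the main technical obstacle, is to extract from holomorphy of $A_i,B_i$ on $U$ and the tangency $B_i|_{w=0}=0$ that each of $\alpha,\beta,a,b,\delta$ lies in $\mathcal{M}(0)$. Expanding $B_2$ in powers of $z$, holomorphy plus tangency forces both $1/(k'\alpha)$ and $\delta/(k'\alpha)$ to be holomorphic at $w=0$ and vanish there, so $k'\alpha$ has a pole at the origin and $\delta\in\mathcal{M}(0)$. The $z$-expansions of $A_1,A_2$, combined with the relation $\beta'a=b'\alpha$ from \eqref{specialrelation}, yield holomorphy at $w=0$ of both $\beta'/(k'\alpha^2)$ and $\beta'k/(k'\alpha^2)$; their ratio then gives $k\in\mathcal{M}(0)$. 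Since $k'\alpha$ has a pole, $k$ is nonconstant, so $\alpha=(k'\alpha)/k'\in\mathcal{M}(0)$, and hence $a=k\alpha$, $\beta'$, $b'=\beta'k$ are meromorphic. The single-valuedness of the $\alpha_j,\beta_j$ on $\Delta^*_\epsilon$ (part of our assumption on $\mathcal{F}$, via Theorem~3.4(iii)) rules out logarithmic obstructions upon integration, yielding $\beta,b\in\mathcal{M}(0)$ as well. By the meromorphy criterion in the proof of Theorem~3.4(iv), $\mathcal{F}$ then extends holomorphically to $X$, contradicting our standing hypothesis. The delicate points will be the order-of-vanishing bookkeeping in this step and the handling of the degenerate sub-case $\beta'\equiv 0$ (where $\beta,b$ are constants), which must be treated separately and where meromorphy of $\alpha,a,\delta$ must be extracted directly from the holomorphy of $A_1,A_2$.
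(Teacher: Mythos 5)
Your overall strategy is the paper's: pull back the two constant fields, use holomorphy of the pullbacks across $X$ to force $\alpha,\beta,a,b,\delta\in\mathcal M(0)$, and conclude via the criterion in Theorem~3.4(iv) that $\mathcal F$ extends to $X$, a contradiction. Your computations in the generic case are correct (the paper even gets by without the tangency $B_i|_{w=0}=0$, reading $k'\alpha\in\mathcal M(0)$ off holomorphy of $1/(k'\alpha)$ alone and deducing that $k$ is nonconstant from \eqref{jacobian}). The genuine gap is in your reduction to the form \eqref{fgaalpha}. That form is obtained from \eqref{fralin} by pure algebra, with no composition, precisely when $\alpha_0\not\equiv 0$ in the representation of $\mathcal F$ adapted to the \emph{given} chart $V$. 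When $\alpha_0\equiv 0$, i.e.\ $\mathcal F$ is affine in $z$ with respect to $V$, composing with $\sigma\in\mbox{Aut}(\CP{2})$ does not repair this: the hypothesis concerns the specific fields $\partial/\partial z^*,\partial/\partial w^*$ of the chart $V$, and after composition the relevant fields become $\sigma_*(\partial/\partial z^*),\sigma_*(\partial/\partial w^*)$, which are constant only in the chart $\sigma(V)$. If you carry the chart along, the adapted representation of $\sigma\circ\mathcal F$ is unchanged and $\alpha_0$ is still identically zero; if you keep the chart, the conjugated fields are quadratic and your explicit formulas for $A_i,B_i$ no longer apply. (Only affine $\sigma$ normalize the translation subalgebra, and those do not alter whether $\alpha_0\equiv 0$.) The affine case is not vacuous --- the mappings $\Lambda_m$ of Example~6.7 are affine in $z$ and do not extend to $X$ --- and the paper treats it separately via \eqref{lincase}, \eqref{lindz}, \eqref{lindw}, where the vanishing of $I_1$ now forces $k=a/\alpha$ to be \emph{constant} and the meromorphy of $\alpha,\beta',b'$ is extracted by a different bookkeeping.

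A second, smaller defect: your extraction of $k\in\mathcal M(0)$ from the ratio of the $z^3$-coefficients $\beta'/(k'\alpha^2)$ and $k\beta'/(k'\alpha^2)$ fails when $\beta'\equiv 0$, a sub-case you flag but do not resolve. It is avoidable: take instead the ratio of the two $\partial/\partial w$-components, $B_1/B_2=-k$, both holomorphic near the origin with $B_2=-(z+\delta)/(k'\alpha)\not\equiv 0$; this yields $k\in\mathcal M(0)$ with no case distinction, and is what the paper does. Finally, to invoke Theorem~3.4(iv) you also need $\beta,b\in\mathcal M(0)$; this should be said explicitly, e.g.\ via $\beta'/\alpha=A(w)/(3w^m)$ from \eqref{b'kbeta} together with single-valuedness to exclude logarithms upon integration.
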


\begin{proof}
Assume on the contrary that
$\mbox{span}_{\CC{}}\left\{\frac{\partial}{\partial
z^*},\frac{\partial}{\partial w^*}\right\}\subset\mathfrak h^*$.
Take the regular set $U^0\subset U\setminus X$ (see Section 5) and
consider $\mathcal F$, restricted to $U^0$, as a mapping into $V$.
Consider first the case when $\alpha_0(w)\not\equiv 0$ in
\eqref{fralin}. We represent $\mathcal F$ as in \eqref{fgaalpha}
with single-valued $\alpha(w),\beta(w),a(w),b(w),\delta(w)$. Then,
applying \eqref{fgaalpha}, we have
\begin{gather}\label{eddz}
\mathcal F^{-1}\circ\frac{\partial}{\partial
z^*}=T_1(z,w)\frac{\partial}{\partial
z}+\frac{a}{\alpha'a-a'\alpha}(z+\delta)\frac{\partial}{\partial
w},\\
 \label{eddw}\mathcal F^{-1}\circ\frac{\partial}{\partial
w^*}=T_2(z,w)\frac{\partial}{\partial
z}-\frac{\alpha}{\alpha'a-a'\alpha}(z+\delta)\frac{\partial}{\partial
w}.
\end{gather}
Here $T_1(z,w),T_2(z,w)$ are some specific functions, but their
exact form is of no importance to us. Since the vector fields
in~\eqref{eddz} and~\eqref{eddw} extend holomorphically to $X$,
the functions $P(z,w)=\frac{a}{\alpha'a-a'\alpha}(z+\delta)$ and
$Q(z,w)=\frac{\alpha}{\alpha'a-a'\alpha}(z+\delta)$ are
holomorphic near the origin. From this it follows that
$\delta(w)\in\mathcal M(0)$. Further, letting
$a(w)=k(w)\alpha(w)$, we conclude that
$k(w)=\frac{P}{Q}\in\mathcal M(0)$. Since
$Q(z,w)=-\frac{1}{k'\alpha}(z+\delta)$, it follows that
$k'\alpha\in\mathcal M(0)$, so that $\alpha(w), a(w)\in\mathcal
M(0)$. Note that $k(w)$ is not a constant, as this would
contradict~\eqref{jacobian}. Thus, by Theorem~3.4, $\mathcal F$
extends to $X$ holomorphically, which is a contradiction.

Now consider the case when $\alpha_0(w)\equiv 0$ in
\eqref{fralin}. It follows that $\mathcal F=(f,g)$ satisfies
\begin{equation}
\label{lincase} f=\alpha z+\beta,\,g=az+b
\end{equation}
for some single-valued meromorphic in $\Delta^*_\epsilon$
functions $\alpha(w),\beta(w),a(w),b(w)$. Then either
$\alpha\not\equiv 0$ or $a\not\equiv 0$ (as $\mathcal F$ is
locally injective). Say, $\alpha\not\equiv 0$, so we set
$k(w):=\frac{a(w)}{\alpha(w)}$. Then the fact that $I_1(z,w)=0$ in
\eqref{e.4.7} (see Proposition 5.2) yields the special relation
$\alpha'a-a'\alpha=0$, which implies that $k$ is a constant. We
now apply \eqref{lincase} to conclude that the Jacobian of the
mapping $\mathcal F$ is equal to $\alpha(b'-k\beta')$, and that
\begin{eqnarray}\label{lindz} \mathcal
F^{-1}\circ\frac{\partial}{\partial
z^*}=\left(k\frac{\alpha'}{\alpha}\frac{1}{b'-k\beta'}z+\frac{b'}{\alpha}\frac{1}{b'-k\beta'}\right)\frac{\partial}{\partial
z}-k\frac{1}{b'-k\beta'}\frac{\partial}{\partial
w},\\
\label{lindw} \mathcal F^{-1}\circ\frac{\partial}{\partial
w^*}=-\left(\frac{\alpha'}{\alpha}\frac{1}{b'-k\beta'}z+\frac{\beta'}{\alpha}\frac{1}{b'-k\beta'}\right)\frac{\partial}{\partial
z}+\frac{1}{b'-k\beta'}\frac{\partial}{\partial w}.
\end{eqnarray}
As both \eqref{lindz},\eqref{lindw} extend to $X$ holomorphically,
we conclude first that $b'-k\beta'\in\mathcal M(0)$ and second,
considering the linear combination
$F^{-1}\circ\frac{\partial}{\partial z^*}+k\mathcal
F^{-1}\circ\frac{\partial}{\partial w^*}=\frac{1}{\alpha}\dz$,
that $\alpha\in\mathcal M(0)$. These two conclusions imply
$\beta',b'\in\mathcal M(0)$ and finally $\beta,b,a\in\mathcal
M(0)$. Then by Theorem 3.4 $\mathcal F$ extends to $X$
holomorphically, which is again a contradiction. This proves the
proposition.
\end{proof}

Our next goal is the classification of higher-dimensional Lie
subalgebras of $\mathfrak{sl}(3,\CC{})$. We could not find an
appropriate reference in the literature, so for the sake of
completeness we provide the proof that was suggested to us by
Andrey Minchenko. By a {\it matrix element} $e_{ij}$ we mean a
square matrix all of whose entries are zero, except the entry in
the $i$-th row and the $j$-th column which equals 1.

\begin{propos}\label{p.9.2}
Let $\mathfrak l\subset\mathfrak{sl}(3,\CC{})$ be a complex Lie
subalgebra, $\mbox{dim}\,\mathfrak l\geq 5$. Denote by $\mathfrak
b_\pm$ the subalgebras of upper-triangular and lower-triangular
elements of $\mathfrak{sl}(3,\CC{})$ respectively, and by
$\mathfrak r_{\pm}$ the subalgebras of zero last row and zero last
column elements of $\mathfrak{sl}(3,\CC{})$ respectively. Let
$\mathfrak p_{+}=\mathfrak b_{+}\oplus \CC{}e_{21}$, and
$\mathfrak p_{-}= \mathfrak b_{-}\oplus \CC{}e_{23}$. Then
$\mathfrak l$ is conjugated in $\mathfrak{sl}(3,\CC{})$ to one of
the subalgebras $\mathfrak b_{+}$, $\mathfrak r_{\pm}$, $\mathfrak
p_{\pm}$, or $\mathfrak{sl}(3,\CC{})$.\end{propos}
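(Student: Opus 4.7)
The plan is to reduce $\mathfrak{l}$ to being contained in a maximal proper subalgebra of $\mathfrak{sl}(3,\mathbb{C})$, and then enumerate the dimension $5$ and $6$ subalgebras inside each such maximal subalgebra. I will split the analysis by the reducibility of the tautological action of $\mathfrak{l}$ on $\mathbb{C}^{3}$.

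If $\mathfrak{l}$ acts irreducibly on $\mathbb{C}^{3}$, I will invoke the classical fact that an irreducible subalgebra of $\mathfrak{gl}(V)$ is reductive, together with Schur's lemma, to deduce that the center of $\mathfrak{l}$ consists of scalar matrices; since $\mathfrak{l}\subset\mathfrak{sl}(3,\mathbb{C})$ contains no nonzero scalars, $\mathfrak{l}$ must be semisimple. Among semisimple Lie algebras, only $\mathfrak{sl}(3,\mathbb{C})$ and $\mathfrak{sl}(2,\mathbb{C})\simeq\mathfrak{so}(3,\mathbb{C})$ admit a faithful irreducible $3$-dimensional representation (the standard representation and the symmetric square, respectively), and the latter has dimension $3<5$. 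So in this case $\mathfrak{l}=\mathfrak{sl}(3,\mathbb{C})$.

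If instead $\mathfrak{l}$ preserves a proper subspace $V\subset\mathbb{C}^{3}$, then $\dim V\in\{1,2\}$, and after conjugating by a suitable element of $\mathrm{PGL}(3,\mathbb{C})$ I may arrange $V=\mathrm{span}(e_{1},e_{2})$ or $V=\mathbb{C} e_{3}$, placing $\mathfrak{l}$ inside $\mathfrak{p}_{+}$ or $\mathfrak{p}_{-}$, respectively. Since $\dim\mathfrak{p}_{\pm}=6$, this also rules out the possibility $\dim\mathfrak{l}=7$. The case $\dim\mathfrak{l}=6$ forces $\mathfrak{l}=\mathfrak{p}_{\pm}$. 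For $\dim\mathfrak{l}=5$ with $\mathfrak{l}\subset\mathfrak{p}_{+}$ (the case $\mathfrak{p}_{-}$ is entirely parallel), I will exploit the Levi decomposition $\mathfrak{p}_{+}=(\mathfrak{sl}(2,\mathbb{C})\oplus\mathbb{C} z)\ltimes\mathfrak{n}$, where $\mathfrak{n}=\mathrm{span}(e_{13},e_{23})$ is the $2$-dimensional abelian nilradical, irreducible as an $\mathfrak{sl}(2,\mathbb{C})$-module. Writing $\pi$ for the projection onto the Levi quotient, the subspace $\mathfrak{l}\cap\mathfrak{n}$ is $\pi(\mathfrak{l})$-invariant (since $\mathfrak{n}$ is abelian and an ideal in $\mathfrak{p}_{+}$). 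Dimension counting gives $\dim\pi(\mathfrak{l})+\dim(\mathfrak{l}\cap\mathfrak{n})=5$ with $\dim(\mathfrak{l}\cap\mathfrak{n})\leq 2$ and $\dim\pi(\mathfrak{l})\leq 4$, leaving only the possibilities $(4,1)$ and $(3,2)$; the first is excluded by the irreducibility of $\mathfrak{n}$ as an $\mathfrak{sl}(2,\mathbb{C})$-module. Thus $\mathfrak{n}\subset\mathfrak{l}$ and $\mathfrak{l}/\mathfrak{n}$ is a $3$-dimensional subalgebra of $\mathfrak{sl}(2,\mathbb{C})\oplus\mathbb{C} z$. Projecting onto the $\mathbb{C} z$ summand and using that the self-normalizer of a Borel of $\mathfrak{sl}(2,\mathbb{C})$ is the Borel itself, a direct enumeration shows that such a subalgebra is, up to $\mathrm{SL}(2,\mathbb{C})$-conjugation, either $\mathfrak{sl}(2,\mathbb{C})$ (producing $\mathfrak{l}=\mathfrak{r}_{+}$) or a Borel plus $\mathbb{C} z$ (producing $\mathfrak{l}=\mathfrak{b}_{+}$). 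The parallel analysis for $\mathfrak{p}_{-}$ yields $\mathfrak{r}_{-}$ and $\mathfrak{b}_{-}$, while $\mathfrak{b}_{-}$ is inner-conjugate to $\mathfrak{b}_{+}$ via the long Weyl element.

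The principal obstacle is the irreducible case, whose self-contained treatment requires either the Dynkin--Malcev classification of irreducible Lie subalgebras of $\mathfrak{sl}(n,\mathbb{C})$ or a dedicated argument classifying faithful irreducible representations of semisimple Lie algebras in low dimensions. A secondary subtlety is to ensure that all declared equivalences are realized by inner automorphisms (i.e.\ by $\mathrm{PGL}(3,\mathbb{C})$ rather than the outer transpose-inverse), so that $\mathfrak{p}_{+}$ versus $\mathfrak{p}_{-}$ and $\mathfrak{r}_{+}$ versus $\mathfrak{r}_{-}$ remain distinct conjugacy classes; the remainder of the argument is routine linear algebra within the Levi decomposition.
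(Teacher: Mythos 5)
Your proof is correct but follows a genuinely different route from the paper's. The paper splits on whether $\mathfrak l$ is solvable: if so, it must be a Borel subalgebra, hence conjugate to $\mathfrak b_+$; if not, its Levi factor contains a copy of $\mathfrak{sl}(2,\CC{})$, which up to conjugacy is either $\mathfrak{so}(3,\CC{})$ or the upper-left block, and $\mathfrak l$ is then recovered as a sum of irreducible summands of the adjoint representation restricted to that copy. You instead split on whether the tautological action of $\mathfrak l$ on $\CC{3}$ is irreducible (forcing $\mathfrak l$ semisimple and then equal to $\mathfrak{sl}(3,\CC{})$ by the low-dimensional representation classification) or reducible (placing $\mathfrak l$ in a maximal parabolic, inside which you enumerate via the Levi decomposition). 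Your route buys two things. First, it avoids a subtlety the paper glosses over: in the block-$\mathfrak{sl}(2,\CC{})$ case the two summands $\CC{}e_{13}\oplus\CC{}e_{23}$ and $\CC{}e_{31}\oplus\CC{}e_{32}$ are isomorphic $\mathfrak{sl}(2,\CC{})$-modules, so an invariant subspace need not be a direct sum of the listed summands --- diagonal submodules must be excluded by a separate bracket computation, which the paper does not perform. Second, your reducible case correctly distinguishes the stabilizer of a plane from the stabilizer of a line, which are not conjugate in $\mathfrak{sl}(3,\CC{})$; note in this connection that the paper's stated $\mathfrak p_-=\mathfrak b_-\oplus\CC{}e_{23}$ is itself a plane-stabilizer (conjugate to $\mathfrak p_+$ by a permutation matrix), whereas the subalgebra actually produced by the paper's proof, namely $\mathfrak{sl}(2,\CC{})\oplus\CC{}e_{31}\oplus\CC{}e_{32}\oplus\CC{}h=\{A:A_{13}=A_{23}=0\}=\mathfrak b_-\oplus\CC{}e_{12}$, is the line-stabilizer; the $e_{23}$ in the statement is evidently a typo for $e_{12}$, and your $\mathfrak p_-$ matches the intended object. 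The price of your approach is the reliance on the classical facts that an irreducible linear Lie algebra is reductive and that the only semisimple algebras with a faithful irreducible $3$-dimensional representation are $\mathfrak{sl}(3,\CC{})$ and $\mathfrak{sl}(2,\CC{})$; these are standard, and the enumeration inside the parabolics that you leave as routine is indeed routine.
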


\begin{proof}
In what follows we refer to \cite{vinberg} for various facts from
the Lie theory. First, consider the case when $\mathfrak l$ is
solvable. Then, as $\mbox{dim}\,\mathfrak l\geq 5$, we conclude
that $\mathfrak l$ is the Borel subalgebra. As the Borel
subalgebra is unique, up to a conjugation, we conclude that
$\mathfrak l$ is conjugated to $\mathfrak b_+$. If, otherwise,
$\mathfrak l$ is not solvable, then its Levi-Malcev decomposition
contains a nontrivial semi-simple factor. From the structure
theory of semi-simple Lie algebras, any such factor contains a
subalgebra, isomorphic to $\mathfrak{sl}(2,\CC{})$. It is known
that there exist, up to a conjugation, exactly two subalgebras in
$\mathfrak{sl}(3,\CC{})$, isomorphic to $\mathfrak{sl}(2,\CC{})$:
the first one is
$\mathfrak{so}(3,\CC{})\subset\mathfrak{sl}(3,\CC{})$, and the
second one is $\mathfrak{sl}(2,\CC{})\subset
\mathfrak{sl}(3,\CC{})$, embedded as the left upper $2\times 2$
block, so that we may suppose that, after an appropriate
conjugation, $\mathfrak l$ contains one of the above subalgebras.
Consider first the case of $\mathfrak{so}(3,\CC{})\subset
\mathfrak l\subset\mathfrak{sl}(3,\CC{})$. Then the subalgebra
$\mathfrak{so}(3,\CC{})$ acts on $\mathfrak{sl}(3,\CC{})$ by the
adjoint representation of $\mathfrak{sl}(3,\CC{})$, restricted
onto $\mathfrak{so}(3,\CC{})$. Decomposing
$\mathfrak{sl}(3,\CC{})$ into a direct sum of irreducible
invariant subspaces for the above action, we get the decomposition
$$\mathfrak{sl}(3,\CC{})=\mathfrak{so}(3,\CC{})\oplus  V_,$$ where $V$ is the subspace of
all symmetric matrices from $\mathfrak{sl}(3,\CC{})$. The
subalgebra $\mathfrak l$ must be the sum of
$\mathfrak{so}(3,\CC{})$ and some of the invariant subspaces, so
$\mathfrak l=\mathfrak{sl}(3,\CC{})$ or $\mathfrak
l=\mathfrak{so}(3,\CC{})$. As $\mbox{dim}\,\mathfrak l\geq 5$, we
summarize the $\mathfrak{so}(3,\CC{})$-case with the conclusion
$\mathfrak l=\mathfrak{sl}(3,\CC{}).$

Consider now the case $\mathfrak{sl}(2,\CC{})\subset \mathfrak
l\subset\mathfrak{sl}(3,\CC{})$. Arguing as in the
$\mathfrak{so}(3,\CC{})$-case, we obtain the decomposition
$$\mathfrak{sl}(3,\CC{})=\mathfrak{sl}(2,\CC{})\oplus(\CC{}e_{13}\oplus\CC{}e_{23})\oplus(\CC{}e_{31}\oplus\CC{}e_{32})\oplus \CC{}h$$ of
$\mathfrak{sl}(3,\CC{})$ into the direct sum of irreducible
invariant subspaces of $\mathfrak{sl}(3,\CC{})$ under the adjoint
action of $\mathfrak{sl}(3,\CC{})$, restricted onto
$\mathfrak{sl}(2,\CC{})$. Here $h=\mbox{diag}\{1,1,-2\}$. The
algebra $\mathfrak l$ is the direct sum of
$\mathfrak{sl}(2,\CC{})$ and some of the invariant subspaces.
Then, in view of the assumption $\mbox{dim}\,\mathfrak l\geq 5$,
we obtain the following list of distinct decompositions of
$\mathfrak l$:
\begin{gather*}\mathfrak
l=\mathfrak{sl}(2,\CC{})\oplus(\CC{}e_{13}\oplus\CC{}e_{23});\,\mathfrak
l=\mathfrak{sl}(2,\CC{})\oplus(\CC{}e_{31}\oplus\CC{}e_{32});\,
\mathfrak
l=\mathfrak{sl}(2,\CC{})\oplus(\CC{}e_{13}\oplus\CC{}e_{23})\oplus\CC{}h;\\
\mathfrak
l=\mathfrak{sl}(2,\CC{})\oplus(\CC{}e_{31}\oplus\CC{}e_{32})\oplus\CC{}h;\,\mathfrak
l=\mathfrak{sl}(2,\CC{})\oplus(\CC{}e_{13}\oplus\CC{}e_{23})\oplus(\CC{}e_{31}\oplus\CC{}e_{32})\oplus
\CC{}h.\end{gather*} This implies the claim of the proposition.
\end{proof}

The classification implies

\begin{propos}\label{p.9.3}
Let $\mathfrak l$ be a subalgebra in $\mathfrak{hol}(\CP{2})$ with
$\mbox{dim}\, \mathfrak l\geq 5$. Then there exists an affine
chart $V$ with coordinates $(z^*,w^*)$ such that $\mathfrak l$
contains the 2-dimensional subalgebra $\mathfrak a$, given in $V$
by \eqref{2alg}.
\end{propos}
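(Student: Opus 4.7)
The strategy is a direct reduction to the classification in Proposition \ref{p.9.2}. Since $\mathfrak l\subset\mathfrak{hol}(\CP{2})\simeq\mathfrak{sl}(3,\CC{})$ has dimension at least $5$, that proposition gives an element $g\in\mbox{PGL}(3,\CC{})$ such that $\mbox{Ad}(g)\,\mathfrak l$ coincides with one of $\mathfrak b_+$, $\mathfrak r_\pm$, $\mathfrak p_\pm$, or $\mathfrak{sl}(3,\CC{})$. The key observation is that $\mbox{Ad}(g)$ is realized geometrically by the push-forward of the projective transformation $g$ acting on vector fields, and this push-forward maps affine charts of $\CP{2}$ to affine charts, carrying coordinate translations in one chart to coordinate translations in the image chart. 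Thus it suffices to exhibit, for each representative subalgebra $\mathfrak l_0$ in the list, a single affine chart $V_0$ in which $\mathfrak l_0$ contains both coordinate translations $\partial/\partial z^*$, $\partial/\partial w^*$; then $V=g^{-1}(V_0)$ is the desired chart for $\mathfrak l$.

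\textbf{Identification of translations.} A direct computation of the standard action of $\mathfrak{sl}(3,\CC{})$ on $\CP{2}$ shows that in the affine chart $[z^*\!:\!w^*\!:\!1]$ the matrix units $e_{13}$ and $e_{23}$ correspond to $\partial/\partial z^*$ and $\partial/\partial w^*$ respectively, while in the chart $[1\!:\!w^*\!:\!z^*]$ the translations are given by $e_{21}$ and $e_{31}$. Now inspect the list: $\mathfrak b_+$, $\mathfrak r_+$ and $\mathfrak p_+$ each contain the strict upper-triangular part of $\mathfrak{sl}(3,\CC{})$ and hence both $e_{13}$ and $e_{23}$, so the standard chart $[z^*\!:\!w^*\!:\!1]$ works; $\mathfrak r_-$ and $\mathfrak p_-$ each contain the strict lower-triangular part and hence both $e_{21}$ and $e_{31}$, so the chart $[1\!:\!w^*\!:\!z^*]$ works; for $\mathfrak{sl}(3,\CC{})$ itself any affine chart suffices. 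In every case we have found an affine chart $V_0$ with $\mathfrak a\subset \mathfrak l_0$.

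\textbf{Transport back and conclusion.} Pulling $V_0$ back by $g$ gives an affine chart $V$ in which the coordinate translations are precisely the push-forwards under $g^{-1}$ of $\partial/\partial z^*$ and $\partial/\partial w^*$; by the naturality of the push-forward on vector fields together with $\mathfrak l=g_*^{-1}\mathfrak l_0$, these translations lie in $\mathfrak l$, proving the proposition. There is no genuine obstacle in this proof: the classification of Proposition \ref{p.9.2} is doing the heavy lifting, and the remaining task is the bookkeeping of Steps 1 and 2 above, namely matching matrix entries to translations in the two distinguished charts and checking inclusion of $e_{13},e_{23}$ or $e_{21},e_{31}$ in each listed representative. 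The mild subtlety worth recording explicitly is that the two charts $[z^*\!:\!w^*\!:\!1]$ and $[1\!:\!w^*\!:\!z^*]$ together suffice: the ``upper'' subalgebras accommodate the first, and the ``lower'' subalgebras the second.
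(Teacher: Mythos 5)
Your proof is correct and follows essentially the same route as the paper: reduce to the representatives of Proposition 9.2, observe that the "upper" subalgebras already contain $e_{13},e_{23}$ (the translations in the standard chart), and handle $\mathfrak r_-,\mathfrak p_-$ by a change of chart. The only cosmetic difference is that the paper realizes the latter step by conjugating with the explicit cyclic permutation matrix $A=e_{31}+e_{12}+e_{23}$, which carries $e_{21},e_{31}$ to $e_{13},e_{23}$, whereas you read off $e_{21},e_{31}$ directly as the translations in the chart $[1\!:\!w^*\!:\!z^*]$ --- these are the same observation.
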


\begin{proof} Interpreting the commutative Lie algebra $\mathfrak a$ of holomorphic vector fields as a subalgebra in
$\mathfrak{sl}(3,\CC{})$, we obtain the representation of
$\mathfrak a$ as $\mbox{span}_{\CC{}}\{e_{13},e_{23}\}$ (we use
the notation of Proposition~\ref{p.9.2} in what follows). In order
to use the classification, given by Proposition~\ref{p.9.2}, we
assign to each conjugacy in the Lie algebra
$\mathfrak{sl}(3,\CC{})$ a projective coordinate change in
$\CP{2}$, and get the corresponding affine chart $V\subset\CP{2}$
with the coordinates $(z^*,w^*)$ (see Section 2.5). Note that the
subalgebras $\mathfrak b_+,\mathfrak r_+,\mathfrak p_+\subset
\mathfrak{sl}(3,\CC{})$ already contain $\mathfrak a$. Further, it
is straightforward to check that the matrix
$A=e_{31}+e_{12}+e_{23}\in\mbox{SL}(3,\CC{})$ conjugates the
matrices $e_{21},e_{31}\in\mathfrak r_- \cap \mathfrak p_-$ with
the matrices $e_{13},e_{23}$ respectively. The latter implies that
{\it any} subalgebra $\mathfrak l\subset \mathfrak{sl}(3,\CC{})$
with $\mbox{dim}\, \mathfrak l\geq 5$ contains, after an
appropriate conjugation, the algebra $\mathfrak a$. This proves
the proposition.
\end{proof}

Combined, Propositions~\ref{p.9.1}, Proposition 9.3 and Corollary
8.1 yield

\begin{corol}\label{c.9.4}
 Let $M\in\mathcal P_0$, and the associated mapping $\mathcal F$ does not extend to $X$ holomorphically. Then
$\mbox{dim}\,\mathfrak{hol}(M,0)\leq 4$.
\end{corol}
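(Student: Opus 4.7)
The plan is to argue by contradiction, assuming $\dim \mathfrak{hol}(M,0) \geq 5$ and deriving a contradiction with the non-extension hypothesis by assembling the tools from Sections 8 and 9. The first move is to observe that, under the assumption $\dim \mathfrak{hol}(M,0) \geq 5$ for a hypersurface $M \in \mathcal P_0$, the monodromy operator $\sigma$ must be trivial. Indeed, Theorem 3.7 gives $\dim_{\mathbb R}\mathfrak{hol}(M,0) \leq \dim_{\mathbb C} z(\sigma)$, and the Jordan-form analysis in the proof of Corollary 8.1 shows $\dim_{\mathbb C} z(\sigma) \leq 4$ whenever $\sigma \neq \mathrm{Id}$. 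Hence $\mathcal F$ is single-valued in $U \setminus X$, so the global push-forward construction described at the opening of Section~9 is well-defined.

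Next, I would form the complexification $\mathfrak h = \mathfrak{hol}(M,0) \otimes \mathbb{C}$, whose complex dimension is at least $5$, and push $\mathfrak h$ forward through $\mathcal F$ at a Levi-nondegenerate base point $p$ to obtain a complex Lie subalgebra $\mathfrak h^* \subset \mathfrak{hol}(\mathbb{CP}^2) \cong \mathfrak{sl}(3,\mathbb C)$ of the same complex dimension. The essential structural property recorded in Section~9 is that the pullback $\mathcal F^{-1} \circ \mathfrak h^*$ coincides, on $U \setminus X$, with the original algebra $\mathfrak h$; consequently, every vector field in $\mathcal F^{-1} \circ \mathfrak h^*$ extends holomorphically across the complex locus $X$.

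I would then apply Proposition 9.3 to $\mathfrak h^*$ to produce an affine chart $V \subset \mathbb{CP}^2$ with coordinates $(z^*, w^*)$ in which $\mathfrak h^*$ contains the two-dimensional commutative subalgebra $\mathrm{span}_{\mathbb C}\!\left\{\frac{\partial}{\partial z^*}, \frac{\partial}{\partial w^*}\right\}$ appearing in \eqref{2alg}. By the extension property established in the previous paragraph, the pullbacks of these two translation fields through $\mathcal F$ extend holomorphically to $X$. This is exactly the situation ruled out by Proposition 9.1 under the hypothesis that $\mathcal F$ does not extend holomorphically to $X$, yielding the desired contradiction and the bound $\dim \mathfrak{hol}(M,0) \leq 4$.

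The hard work has already been done in the preceding propositions: the algebraic classification of $\geq 5$-dimensional subalgebras of $\mathfrak{sl}(3,\mathbb C)$ in Proposition 9.2, its projective-coordinate consequence in Proposition 9.3, and most importantly the case analysis of Proposition 9.1 which couples non-extension of $\mathcal F$ directly with the impossibility of realizing the translation pair in $\mathfrak h^*$. Given these inputs the present corollary is a short assembly, and no new obstacle is expected beyond faithfully tracking that the push-forward/pullback identification $\mathfrak h = \mathcal F^{-1} \circ \mathfrak h^*$ is legitimate on $U \setminus X$, which in turn depends only on the single-valuedness of $\mathcal F$ secured in the first step.
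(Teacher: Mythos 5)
Your proposal is correct and follows essentially the same route as the paper, whose proof of Corollary 9.4 is precisely the assembly "Propositions 9.1, 9.3 and Corollary 8.1 combined": you have merely unpacked Corollary 8.1 into its underlying centralizer argument (Theorem 3.7 plus the Jordan-form bound $\dim_{\CC{}} z(\sigma)\leq 4$ for $\sigma\neq\mathrm{Id}$) to secure single-valuedness of $\mathcal F$, and then applied Proposition 9.3 followed by Proposition 9.1 exactly as intended. No gaps.
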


Corollary~\ref{c.9.4} immediately implies the proof of Theorem
3.8. Combining with Corollary 8.1, we obtain also Theorem 3.9.
Theorem 3.10 follows from a combination of Corollary 9.4, Theorem
3.9 and Corollary 8.1. Finally, Theorem 3.11 follows from the fact
the any Lie algebra of dimension $\leq 3$ is contained in
$\mathfrak{su}(2,1)$ (see, e.g., \cite{vinberg}), and in the case
$4\leq\mbox{dim}\,\mathfrak{hol}(M,0)<\infty$ $M$ needs to be
spherical at its generic point and the embedding into
$\mathfrak{hol}(S^3,o)$ is immediate. The bound
$\mbox{dim}\,\mathfrak{hol}(M,0)\leq 5$ in the nonspherical case
follows from Theorem 3.10.

\medskip

 All the results of the paper are completely proved now.


\end{document}